 \def\LaTeX{\leavevmode L\raise.42ex
   \hbox{\kern-.3em\size{\sf@size}{0pt}\selectfont A}\kern-.15em\TeX}
\newcommand{\BibTeX}{{\rm B\kern-.05em{\sc
i\kern-.025emb}\kern-.08em\TeX}}
\newtheorem{col}{Corollary}[section]
\newtheorem{thm}{Theorem}[section]
\newtheorem{defn}{Definition}
\newtheorem{rem}[thm]{Remark}
\theoremstyle{defn}
\newtheorem{lem}[thm]{Lemma}
\newtheorem{remark}[thm]{Remark}
\newtheorem{lemma}[thm]{Lemma}
\numberwithin{equation}{section}
\def\EB{{\bf A}}
\def\FB{{\bf B}}
\def\AB{{\bf A}}
\def\HB{{\bf J}}
\begin{document}

\title[Function spaces,  Moduli of continuity,  Hardy-Steklov  operators ]{ Besov and Paley-Wiener spaces, Moduli of continuity  and Hardy-Steklov operators associated with the group $"ax+b"$}
\author{Isaac Z. Pesenson}
\address{Department of Mathematics, Temple University,
Philadelphia, PA 19122} \email{pesenson@temple.edu}

\keywords{ Group $"ax+b"$, Laplace operators, Sobolev  Besov, and Paley-Wiener spaces, $K$-functional, moduli of continuity, Hardy-Steklov-type operators}
  \subjclass{ 43A85, 41A17;}

\begin{abstract}
We introduce and describe relations between  Sobolev, Besov and Paley-Wiener spaces associated with three representations of the Lie group $G$ of   affine transformations of the line, also known as the $ "ax + b” $ group. These representations are:  left and right regular representations and a representation in a space of functions defined on the half-line. The Besov spaces are described as  interpolation spaces between respective  Sobolev spaces in terms of the $K$-functional and in terms of a relevant moduli  of continuity.  By using a Laplace operators  associated with these representations  a scales of relevant Paley-Wiener spaces are developed and a corresponding  $L_{2}$-approximation theory is constructed in which our Besov spaces appear as approximation spaces.  Another description of our Besov spaces is given in terms of a frequency-localized Hilbert frames.  A Jackson-type inequalities are also proven.

\end{abstract}

\maketitle

\section{Introduction}

A substantial part of the classical harmonic analysis on Euclidean spaces  is dealing with such notions as  Sobolev and Besov spaces, Paley-Wiener (bandlimited) functions, $K$-functional, moduli of continuity, Hardy-Steklov smoothing operators.  
These topics and their numerous extensions and generalizations still attracting attention of many mathematicians:  \cite{BGS},  \cite{O},  \cite{FFP}, \cite{GOTT}, \cite{KT}, \cite{KR}, \cite{NRT1}, \cite{PSU}, \cite{Treb}-\cite{TW2}.  For the classical results see  \cite{A}, \cite{BL}, \cite{BB}, \cite{KPS}, \cite{N}, \cite{PS}, \cite{T}.

In \cite{KP},  \cite{Pes78}-\cite{Pes22} we proposed a development of  Sobolev, Besov, and Paley-Wiener  spaces along with a corresponding theories of interpolation and approximation in  Banach and Hilbert spaces in which a strongly continuous and bounded representation of a Lie group is given. 
The objective of the present paper is to apply our theory to three different representations of the Lie group $G$ of affine transformations of the line, also known as the $ "ax + b” $ group, which is of special interest in harmonic analysis. From one hand, we develop harmonic analysis in a new settings related to this group:     left and right regular representations and a representation in a space of functions defined on the half-line.   From other hand, by treating a concrete group we are able to deliver direct  and simple proofs for all our statements. These proofs are independent on our previous papers in which  notions of the general Lie theory were used.

The paper is organized as follows. Subsections 2.1 and 2.2 devoted to the group $ "ax+b" $ and its representations. 
Subsections \ref{L12}-\ref{X12} discuss respectively  left-regular, right-regular representations of $G$ and also its representation in certain spaces $X^{p}, \>1\leq p<\infty$ of functions on the half-line. They also contain definitions of the relevant Sobolev spaces and moduli of continuity. 
 Section \ref{G-fr-w} generalizes definitions and problems formulated in subsections \ref{L12}-\ref{X12} to the case of a strongly continuous bounded representation of $G$ in a Banach space ${\bf E}$. In particular, it contains the definition and properties of the mixed modulus of continuity  of order $r$:  $\>\Omega^{r}(s, f), \>f\in {\bf E},\>s>0, \>r\in \mathbb{N}$. In this section we also define Besov subspaces of ${\bf E}$ in terms of the $K$-functional and formulate  our main results Theorems \ref{Main-Ineq-000} and \ref{Main-000}. In section \ref{H-U} we treat the case of a unitary representation of $G$ in a Hilbert space ${\bf H}$. By using a corresponding self-adjoint Laplace operator a relevant analogs of  Paley-Wiener subspaces of ${\bf H}$ are defined. In subsection \ref{J} we formulate (Theorem \ref{main-Jack}) and discuss our version of a Jackson-type inequality which describes relations between  approximations by our Paley-Wiener functions and the moduli of  continuity $\>\Omega^{r}(s, f), \>f\in {\bf E},\>s>0, \>r\in \mathbb{N}$. After all, in section \ref{proofs} we prove our main Theorems  \ref{Main-Ineq-000} and \ref{Main-000}. The proof requires introduction of a generalization of the Hardy-Steklov smoothing operator. Appendix contains some basic information about Interpolation and Approximation spaces.

\section{The group $ax+b$ and its representations}\label{Group and Rep}

\subsection{The group $ "ax+b" $}\label{G-0}
The group $G$ of all linear transformations of $\mathbb{R}$ preserving orientation  (also known as the "$ax+b,\>a>0$, group") is isomorphic to the group of matrices of the form

$$
g(a,b)=\left(\begin{array}{|c|c|}\hline  a &  b \\\hline 0 & 1 \\\hline \end{array}\right),\>\>\>a>0, \>\>g(a,b)\in G.
$$
Every such matrix can be considered as  a linear transformation of $\mathbb{R}_{+}$ given by the formula $y=ax+b$ and if one has a transformation $y=a_{1}x+b_{1}$ followed by a transformation $z=a_{2}y+b_{2}$ then the resulting transformation is $x=a_{1}a_{2}x+a_{2}b_{1}+b_{2}$. In other words,
$$
g(a_{2}, b_{2})g(a_{1},b_{1})=g(a_{1}a_{2},\>a_{2}b_{1}+b_{2}).
$$
Consider  one-parameter subgroups 
\begin{equation}\label{subgroups}
\left\{\left(\begin{array}{|c|c|}\hline  e^{x} &  0 \\\hline 0 & 1 \\\hline \end{array}\right)\right\}_{x\in \mathbb{R}},\>\>\>\>\left\{\left(\begin{array}{|c|c|}\hline  1 &  x \\\hline 0 & 1 \\\hline \end{array}\right)\right\}_{x\in \mathbb{R}},
\end{equation}
 The matrices 
\begin{equation}\label{Algebra}
X_{1}=\left(\begin{array}{|c|c|}\hline  1 &  0 \\\hline 0 & 0 \\\hline \end{array}\right),\>\>\>X_{2}=\left(\begin{array}{|c|c|}\hline  0 &  1 \\\hline 0 & 0 \\\hline \end{array}\right),
\end{equation}
represent Lie algebra elements which are tangent to the above subgroups at the  identity $e\in G$. It means that for $x\in \mathbb{R}$ 
$$
\exp xX_{1}=\left(\begin{array}{|c|c|}\hline  e^{x} &  0 \\\hline 0 & 1 \\\hline \end{array}\right),\>\>\>\>\exp xX_{2}=\left(\begin{array}{|c|c|}\hline  1 &  x \\\hline 0 & 1 \\\hline \end{array}\right).
$$
In general, by using the following properties

$$
\left(\begin{array}{|c|c|}\hline  1 &  \alpha \\\hline 0 & 0 \\\hline \end{array}\right)^{n}=\left(\begin{array}{|c|c|}\hline  1 &  \alpha \\\hline 0 & 0 \\\hline \end{array}\right),\>\>\>\>n\in \mathbb{N},
$$
$$
\left(\begin{array}{|c|c|}\hline   0&  1 \\\hline 0 & 0 \\\hline \end{array}\right)^{n}=\left(\begin{array}{|c|c|}\hline  0 &  0\\\hline 0 & 0 \\\hline \end{array}\right),\>\>\>\>n>1,
$$
one obtains for $x_{1}, x_{2}\in \mathbb{R}$
$$
\exp (x_{1}X_{1}+x_{2}X_{2})=\left(\begin{array}{|c|c|}\hline  e^{x_{1}} & \frac{x_{2}}{x_{1}}(e^{x_{1}}-1) \\\hline 0 & 1 \\\hline \end{array}\right)
$$
In other words, the map
$$
Exp: (x_{1}, x_{2})\mapsto \exp (x_{1}X_{1}+x_{2}X_{2}),\>\>(x_{1}, x_{2})\in \mathbb{R}^{2},
$$ 
is a coordinate system in a neighborhood of $e\in G$.
One can also consider another coordinate system around identity $e\in G$
$$
\varphi: (x_{1}, x_{2})\mapsto \exp(x_{1}X_{1})\exp(x_{2}X_{2}),\>\>(x_{1}, x_{2})\in \mathbb{R}^{2}.
$$
Indeed, since
 \begin{equation}
\exp(x_{1}X_{1})\exp(x_{2}X_{2})=\left(\begin{array}{|c|c|}\hline  e^{x_{1}} &  0 \\\hline 0 & 1 \\\hline \end{array}\right)\left(\begin{array}{|c|c|}\hline  1 &  x_{2} \\\hline 0 & 1 \\\hline \end{array}\right)=\left(\begin{array}{|c|c|}\hline  e^{x_{1}} &  x_{2}e^{x_{1}} \\\hline 0 & 1 \\\hline \end{array}\right),  
\end{equation}
one can see that any given element of the group $G$
$$
\left(\begin{array}{|c|c|}\hline  a &  b \\\hline 0 & 1 \\\hline \end{array}\right)\in G,\>\>\>a>0, \>b\in \mathbb{R},
$$
 can be written as 
$$
\exp \left(\ln a \>X_{1}\right)\exp \left( \frac{b}{a}\> X_{2}\right).
$$
We also notice the following obvious relation 
\begin{equation}\label{lie}
\left[X_{1}, X_{2}\right]=X_{1}X_{2}-X_{2}X_{1}=X_{2}.
\end{equation}
The group $G$ can be identified with  the right half-plane 
\begin{equation}\label{half-pl}
\left\{(a,b) \>|\> a\in \mathbb{R}_{+},\>b\in \mathbb{R}\right\}=(0,\>\infty)\times \mathbb{R},
\end{equation}
equipped with the group operation 
\begin{equation}\label{multiplication}
(a, b)(c, d)=(ac,\>ad+b).
\end{equation}
In a such realization of $G$ every function $f$  on it can be treated as a function on the right half-plane in variables $(a,b)$  where $(a, b)\in (0,\>\infty)\times \mathbb{R}$. In this case  the left-invariant measure on $G$   is given by the formula 
\begin{equation}\label{left_measure}
d\mu_{l}=\frac{1}{a^{2}}da\>db.
\end{equation}
The group $G$ is not unimodular and the right-invariant measure on it is given by the formula
$$
d\mu_{r}=\frac{1}{a}da\>db.
$$
\subsection{Representations}\label{Rep}

Let us remind that a strongly continuous representation of a Lie group $G$ in a Banach space $\mathbf{E}$ is  a homomorphism $g\mapsto T(g),\>\>\>g\in \Gamma,\>\>\>T(g)\in GL(\mathbf{E})$, of $G$ into the group $GL(\mathbf{E}) $ of linear bounded invertible operators in $\mathbf{E}$ such that trajectory $T(g)f, \>\>g\in G,\>\>f\in \mathbf{E}, $ is continuous with respect to $g$ for every $f\in \mathbf{E}$. We will consider only uniformly bounded representations. In this case one can introduce a new norm $\|f\|^{'}_{\mathbf{E}}=\sup_{g\in G}\|T(g)f\|_{\mathbf{E}},\>\>f\in \mathbf{E},$ in which $\|T(g)f\|^{'}_{\mathbf{E}}\leq \|f\|^{'}_{\mathbf{E}}$.  Thus, without any restriction  we will assume that the last inequality  is satisfied in the original norm $\|\cdot\|_{\mathbf{E}}$.

Let  $X_{1}, X_{2}$ be the matrices in (\ref{Algebra}) which form a basis of the Lie algebra $\mathbf{g}$ of the group $G$.    
With each $X_{j}, \>j=1,2,$ one associates a  strongly continuous one-parameter group of isometries $t\mapsto T(\exp t X_{j}),\>\>t\in \mathbb{R},$ which will be also denoted as $T_{j}(t)=T(\exp tX_{j}),\>j=1,2.$ The generator of the group $T_{j}(t)=T(\exp tX_{j}),\>j=1,2,$ will be denoted as  $A_{j}, j=1,2,$ and the \textit{span} of operators $A_{j}, j=1,2,$ is known as the differential of the representation $T$. It is known \cite{N} that the common domain of  these operators contains the so-called Garding space $\mathcal{G}$. The Garding space  is a linear and dense in $\mathbf{E}$ manifold which comprised of all vectors  in ${\bf E}$, which have the form 
$$
\int_{G}\varphi(g)T(g)f d\mu_{l},\>\>f\in {\bf L}^{p}(G, d\mu_{l}),
\>\>\>\varphi\in C_{0}^{\infty}(G).
$$
The set $\mathcal{G}$ is  invariant with respect to operators $T(g)$ for all $g\in G$, and with respect to all polynomials in $A_{1}$ and $A_{2}$.

It will be convenient  identify the $ "ax+b" $ group with the group $G=\mathbb{R}_{+}\times \mathbb{R}$  equipped  with the 
 multiplication rule defined  by the formula
\begin{equation}\label{multiplication}
(a_{1}, b_{2})(a_{2}, b_{2})=(a_{1}a_{2}, a_{1}b_{2}+b_{1}),\>\>\>(a_{1}, b_{1}), (a_{2}, b_{2})\in G.
\end{equation}
 In these notations the subgroups in (\ref{subgroups}) are $\{(e^{x}, 0)\}, \> x\in \mathbb{R},$ and $\{(1,x)\}, \>x\in \mathbb{R},$ and 
$$
T_{1}(x)=T(e^{x}, 0)\in  GL({\bf E}),\>\>\>\>
T_{2}(x)=T(1, {x})\in  GL({\bf E}).
$$

\subsection{Left-regular representation of the group $ "ax+b "$}\label{L12} We consider the pair $(G, d\mu_{l})$ where $G$ is identified with the right-half plane (\ref{half-pl}) equipped with the multiplication (\ref{multiplication}), and $d\mu_{l}$ is the left-invariant measure (\ref{left_measure}). Let ${\bf L}^{p}(G, d\mu_{l}),\>1\leq p<\infty,$ be 
the corresponding Lebesgue space of functions $f: G\mapsto \mathbb{C}$.

The left-regular representation of $G$ in a space ${\bf L}^{p}(G, d\mu_{l}),\>1\leq p<\infty$  is defined by the formula 
\begin{equation}\label{L-rep}
U^{L}(a,b)f(x,y)=f\left((a,b)(x,y)\right)=f(ax, ay+b),
\end{equation}
where $(a,b), (x,y)\in  (0,\>\infty)\times \mathbb{R}.$
Since the measure $d\mu_{l}$ is left-invariant 
which means that for every integrable function $f$and every $h\in G$
\begin{equation}
\int_{G}f(hg)d\mu_{l}=\int_{G}f(g)d\mu_{l}
\end{equation}
every operator $U^{L}(a,b)$ is an isometry of the spaces ${\bf L}^{p}(G, d\mu_{l}),\>1\leq p<\infty$
\begin{equation}
\|U^{L}(a,b)f\|_{{\bf L}^{p}(G, d\mu_{l})}=\|f\|_{{\bf L}^{p}(G, d\mu_{l})},\>\>\>1\leq p<\infty.
\end{equation}
The corresponding 
Garding space $\mathcal{G}^{L}$ comprized of all functions in ${\bf L}^{p}(G, d\mu_{l}),\>1\leq p<\infty$, which have the form 
$$
\int_{G}\varphi(g)U^{L}(g)f d\mu_{l},\>\>f\in {\bf L}^{p}(G, d\mu_{l}),
\>\>\>\varphi\in C_{0}^{\infty}(G).
$$
It is linear, dense in ${\bf L}^{p}(G, d\mu_{l}),$ and invariant with respect to $U^{L}(g)$ for all $g\in G$ and with respect to all products $\mathbb{D}_{j_{1}}^{L}... \mathbb{D}_{j_{k}}^{L},\>k\in \mathbb{N}$ . 
The elements $g(e^{t},0),\>t\in \mathbb{R},$ form a one-parameter subgroup in $G$. This subgroup has a representation  as one-parameter $C_{0}$-group of operators acting in a corresponding ${\bf L}^{p}(G, d\mu_{l}),\>1\leq p<\infty,$ by the formula
\begin{equation}
U^{L}(e^{t}, 0)f(x,y)=U^{L}_{1}(t)f(x,y)=f((e^{t},0)(x,y))=f(e^{t}x,e^{t}y),
\end{equation}
and whose generator is defined as
\begin{equation}\label{DL}
\mathbb{D}^{L}_{1}f=\frac{d f(e^{t}x,e^{t}y)}{dt}|_{t=0}=\left(x\partial_{x}+y\partial_{y}\right)f,
\end{equation}
for all $f\in\mathcal{G}^{L}$. 
Another one-parameter subgroup is  formed by elements $g(1,t),\>t\in \mathbb{R}$, and the corresponding one-parameter $C_{0}$-group of operators in  ${\bf L}^{p}(G, d\mu_{l}),\>1\leq p<\infty$ is
\begin{equation}
U^{L}(1,t)f(x,y)=U_{2}^{L}(t)f(x,y)=f((1,t)(x,y))=f(x, y+t),
\end{equation}
whose generator is 
\begin{equation}
\mathbb{ D}^{L}_{2}f=\frac{d f(x, y+t)}{dt}|_{t=0}=\partial_{y}f,\>\>\>f\in \mathcal{G}^{L}.
\end{equation}
Both $\mathbb{D}_{1}^{L}, \mathbb{D}_{2}^{L}$, which originally defined on $\mathcal{G}^{L}$ admit closures from $\mathcal{G}^{L}$,  for which we will keep the same notations. 
Clearly,
\begin{equation}
[\mathbb{D}_{1}^{L}, \mathbb{D}_{2}^{L}]f=\mathbb{D}_{1}^{L}\mathbb{D}_{2}^{L}f-\mathbb{D}_{2}^{L}\mathbb{D}_{1}^{L}f=\mathbb{D}_{1}^{L}f,\>\>\>\>f\in\mathcal{G}^{L}.
\end{equation}
The next definition introduces an analog of the Sobolev spaces.
\begin{defn}
The Banach space $\mathbf{W}^{m}_{p}(\mathbb{D}_{1}^{L}, \mathbb{D}_{2}^{L}),\>m\in \mathbb{N},\> 1\leq p<\infty,$ is the set of functions $f$ in ${\bf L}^{p}(G, d\mu_{l})$ for which the following norm is finite

$$
|||f|||_{\mathbf{W}^{m}_{p}(\mathbb{D}_{1}^{L}, \mathbb{D}_{2}^{L}),}=\|f\|_{{\bf L}^{p}(G, d\mu_{l})}+\sum_{k=1}^{m}\sum_{(j_{1}, ... ,  j_{k})\in \{1,2\}^{k}}\|\mathbb{D}_{j_{1}}^{L}...\mathbb{D}_{j_{k}}^{L}f\|_{{\bf L}^{p}(G, d\mu_{l})}.
$$
\end{defn}
By using the closed graph theorem and the fact that each of $\mathbb{D}_{1}^{L}, \mathbb{D}_{2}^{L}$
is a closed operator in $L^{p}(G, d\mu_{l})$, one can show that this norm is equivalent to the norm
\begin{equation}\label{SobL}
\|f\|_{\mathbf{W}^{m}_{p}(\mathbb{D}_{1}^{L}, \mathbb{D}_{2}^{L}),}=\|f\|_{{\bf L}^{p}(G, d\mu_{l})}+\sum_{(j_{1}, ... ,  j_{m})\in \{1,2\}^{m}}\|\mathbb{D}_{j_{1}}^{L}...\mathbb{D}_{j_{m}}^{L}f\|_{{\bf L}^{p}(G, d\mu_{l})}.
\end{equation}
The mixed modulus of continuity is introduced as
\begin{equation}
\Omega_{m,p}^{L}( s, f)=
$$
$$
\sum_{1\leq j_{1},...,j_{m}\leq
2}\sup_{0\leq\tau_{j_{1}}\leq s}...\sup_{0\leq\tau_{j_{m}}\leq
s}\|
\left(U^{L}_{j_{1}}(\tau_{j_{1}})-I\right)...\left(U^{L}_{j_{m}}(\tau_{j_{m}})-I\right)f\|_{\bf L^{p}(G, d\mu_{l})},\label{M}
\end{equation}
where $f\in {\bf L}^{p}(G, d\mu_{l}),\ m\in \mathbb{N},  $ and $I$ is the
identity operator in ${{\bf L}^{p}(G, d\mu_{l})},\>\>1\leq p<\infty.$ 

When $p=2$ the representation $U^{L}$ is unitary. 
 In this  situation
 $\mathbb{D}_{1}, \mathbb{D}_{2}$ are skew-symmetric.  It is shown in \cite{Nelson} that in this case the non-negative Laplace operator
\begin{equation}\label{L}
\Delta_{L}=-\left(\mathbb{D}_{1}^{L}\right)^{2}-\left(\mathbb{D}_{2}^{L}\right)^{2}
\end{equation}
has a self-adjoint closure from $\mathcal{G}^{L}$. I The next statement foolows from Theorem \ref{H-equivalence}.

\begin{thm}\label{Sob-Mix}
The space $\mathbf{W}^{m}_{2}(\mathbb{D}_{1}^{L}, \mathbb{D}_{2}^{L})$ coincides with the domain $\mathcal{D}(\Delta_{L}^{m/2})$ and the norm (\ref{SobL}) is equivalent to the graph norm $\|f\|_{{\bf L}^{2}(G, d\mu_{l})}+\|\Delta_{L}^{m/2}f\|_{{\bf L}^{2}(G, d\mu_{l})}$. 
\end{thm}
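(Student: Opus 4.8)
The plan is first to establish, for $f$ in the Garding space $\mathcal{G}^{L}$, the two estimates
\begin{equation*}
\|\Delta_{L}^{m/2}f\|\ \le\ C\Big(\|f\|+\sum_{k=1}^{m}\ \sum_{(j_{1},\dots,j_{k})\in\{1,2\}^{k}}\|\mathbb{D}_{j_{1}}^{L}\cdots\mathbb{D}_{j_{k}}^{L}f\|\Big),
\end{equation*}
\begin{equation*}
\sum_{(j_{1},\dots,j_{m})\in\{1,2\}^{m}}\|\mathbb{D}_{j_{1}}^{L}\cdots\mathbb{D}_{j_{m}}^{L}f\|\ \le\ c\big(\|f\|+\|\Delta_{L}^{m/2}f\|\big),
\end{equation*}
with all norms those of ${\bf L}^{2}(G,d\mu_{l})$, and then to transfer them to the closed operators. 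The transfer is legitimate because $\mathcal{G}^{L}$ is invariant under every product $\mathbb{D}_{j_{1}}^{L}\cdots\mathbb{D}_{j_{k}}^{L}$ and, since $\Delta_{L}$ is essentially self-adjoint on $\mathcal{G}^{L}$ by Nelson's theorem \cite{Nelson}, $\mathcal{G}^{L}$ is a common core for $\Delta_{L}^{m/2}$ and for each of these products, and an equivalence of graph norms valid on a common core passes to the completions. (Formally this is the instance of Theorem~\ref{H-equivalence} for the unitary representation $U^{L}$, and the argument below is its concrete version.)

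\medskip
\noindent\emph{The upper bound.} For $m=2\ell$ even, expand $\Delta_{L}^{\ell}=\big(-(\mathbb{D}_{1}^{L})^{2}-(\mathbb{D}_{2}^{L})^{2}\big)^{\ell}$ and normal-order it with the help of the single relation $[\mathbb{D}_{1}^{L},\mathbb{D}_{2}^{L}]=\mathbb{D}_{1}^{L}$: the result is a finite linear combination of products $\mathbb{D}_{j_{1}}^{L}\cdots\mathbb{D}_{j_{k}}^{L}$ with $k\le 2\ell$, so $\|\Delta_{L}^{\ell}f\|$ is bounded by the right-hand side of the first estimate. For $m=2\ell+1$ odd one uses that $\|\Delta_{L}^{1/2}g\|^{2}=\|\mathbb{D}_{1}^{L}g\|^{2}+\|\mathbb{D}_{2}^{L}g\|^{2}$ for $g\in\mathcal{G}^{L}$ (skew-symmetry), applies it with $g=\Delta_{L}^{\ell}f$, and commutes the first-order operators past $\Delta_{L}^{\ell}$, which again produces only products of order $\le m$. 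The two forms of the Sobolev norm in the definition of $\mathbf{W}^{m}_{2}(\mathbb{D}_{1}^{L},\mathbb{D}_{2}^{L})$ — the sum appearing above and the norm (\ref{SobL}) — have already been seen to be equivalent, so this gives the upper half of the asserted equivalence.

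\medskip
\noindent\emph{The lower bound.} This is the crux, and I would prove it by induction on $m$. For $m=1$, skew-symmetry gives $\|\mathbb{D}_{j}^{L}f\|^{2}\le\|\mathbb{D}_{1}^{L}f\|^{2}+\|\mathbb{D}_{2}^{L}f\|^{2}=\|\Delta_{L}^{1/2}f\|^{2}$. For the inductive step one integrates by parts (using skew-symmetry) and repeatedly applies $[\mathbb{D}_{1}^{L},\mathbb{D}_{2}^{L}]=\mathbb{D}_{1}^{L}$ to rewrite $\|\Delta_{L}^{m/2}f\|^{2}$ as $\sum_{(j_{1},\dots,j_{m})}\|\mathbb{D}_{j_{1}}^{L}\cdots\mathbb{D}_{j_{m}}^{L}f\|^{2}$ up to a remainder consisting of squared norms of products of strictly fewer than $m$ of the $\mathbb{D}_{i}^{L}$; the model case $m=2$ is the identity
\begin{equation*}
\|\Delta_{L}f\|^{2}=\|(\mathbb{D}_{1}^{L})^{2}f\|^{2}+\|(\mathbb{D}_{2}^{L})^{2}f\|^{2}+2\|\mathbb{D}_{1}^{L}\mathbb{D}_{2}^{L}f\|^{2}-4\|\mathbb{D}_{1}^{L}f\|^{2},
\end{equation*}
used together with $\|\mathbb{D}_{2}^{L}\mathbb{D}_{1}^{L}f\|\le\|\mathbb{D}_{1}^{L}\mathbb{D}_{2}^{L}f\|+\|\mathbb{D}_{1}^{L}f\|$. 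The remainder is absorbed as follows: by the induction hypothesis it is dominated by $\sum_{k<m}\|\Delta_{L}^{k/2}f\|^{2}$, and since $\Delta_{L}$ is nonnegative self-adjoint the spectral theorem gives the Kolmogorov-type inequality $\|\Delta_{L}^{k/2}f\|\le\varepsilon\|\Delta_{L}^{m/2}f\|+C_{\varepsilon}\|f\|$ for every $k<m$ and $\varepsilon>0$, which lets one move the remainder to the left-hand side and close the induction.

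\medskip
\noindent The main obstacle I anticipate is exactly the bookkeeping in this inductive step: one must verify that normal-ordering $\Delta_{L}^{m/2}$ and carrying out the successive integrations by parts leaves \emph{only} remainder terms of order strictly less than $m$ — in particular that no order-$m$ term with an unfavourable sign survives — so that the interpolation absorption is actually available. The solvability of $\mathbf{g}$, which here reduces to the single relation $[\mathbb{D}_{1}^{L},\mathbb{D}_{2}^{L}]=\mathbb{D}_{1}^{L}$, is what makes this combinatorics manageable. Once both estimates hold on $\mathcal{G}^{L}$, the density argument of the first paragraph yields $\mathbf{W}^{m}_{2}(\mathbb{D}_{1}^{L},\mathbb{D}_{2}^{L})=\mathcal{D}(\Delta_{L}^{m/2})$ with equivalence of the norm (\ref{SobL}) and the graph norm $\|f\|_{{\bf L}^{2}(G,d\mu_{l})}+\|\Delta_{L}^{m/2}f\|_{{\bf L}^{2}(G,d\mu_{l})}$.
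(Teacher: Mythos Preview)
Your sketch goes well beyond what the paper itself provides: the paper simply records that Theorem~\ref{Sob-Mix} is the instance of Theorem~\ref{H-equivalence} for the left-regular representation, and Theorem~\ref{H-equivalence} carries no proof in this paper---it is deferred to \cite{Pes90a}, \cite{Pes22}. So there is no in-paper argument to compare yours against; what you have written is essentially the skeleton of the proof those external references contain, specialised to this two-generator solvable algebra, and you say as much.

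The strategy---upper bound by expanding $\Delta_{L}^{\ell}$ as a noncommutative polynomial in $\mathbb{D}_{1}^{L},\mathbb{D}_{2}^{L}$; lower bound by induction, integrating by parts via skew-symmetry and absorbing the lower-order remainder with the spectral interpolation inequality $\|\Delta_{L}^{k/2}f\|\le\varepsilon\|\Delta_{L}^{m/2}f\|+C_{\varepsilon}\|f\|$; passage from $\mathcal{G}^{L}$ to the closed operators by the common-core argument---is the standard one and is sound. Your model identity for $m=2$ checks out with the commutation relation as printed. One caution, unrelated to your method: the relation $[\mathbb{D}_{1}^{L},\mathbb{D}_{2}^{L}]=\mathbb{D}_{1}^{L}$ displayed in the paper is a misprint (direct computation from $\mathbb{D}_{1}^{L}=x\partial_{x}+y\partial_{y}$, $\mathbb{D}_{2}^{L}=\partial_{y}$ gives $[\mathbb{D}_{1}^{L},\mathbb{D}_{2}^{L}]=-\mathbb{D}_{2}^{L}$, consistent with $[X_{1},X_{2}]=X_{2}$); with the corrected bracket the remainder in your $m=2$ identity becomes $-2\|\mathbb{D}_{2}^{L}f\|^{2}$ rather than $-4\|\mathbb{D}_{1}^{L}f\|^{2}$, which of course does not affect the argument.
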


\subsection{Right-regular representation of the group $ "ax+b "$}\label{R12}

The right-regular representation of $G$ in a space ${\bf L}^{p}(G, d\mu_{r}),\>1\leq p<\infty$  is defined by the formula 
\begin{equation}\label{L-rep}
U^{R}(a,b)f(x,y)=f\left((x, y)(a, b)\right)=f(xa, xb+y),
\end{equation}
where $(a,b), (x,y)\in  (0,\>\infty)\times \mathbb{R}.$
Since the measure $d\mu_{r}$ is right-invariant 
every operator $U^{R}(a,b)$ is an isometry of the spaces ${\bf L}^{p}(G, d\mu_{r}),\>1\leq p<\infty$
\begin{equation}
\|U^{R}(a,b)f\|_{{\bf L}^{p}(G, d\mu_{r})}=\|f\|_{{\bf L}^{p}(G, d\mu_{r})},\>\>\>1\leq p<\infty.
\end{equation}
Similarly to the situation with the left-regular representation we introduce the following one-parameter $C_{0}$-groups of operators acting in ${\bf L}^{p}(G, d\mu_{r}),\>1\leq p<\infty$

\begin{equation}
U^{R}(e^{t}, 0)f(x,y)=U^{R}_{1}(t)f(x,y)=f((x,y) (e^{t},0))=f(e^{t}x, y),
\end{equation}

\begin{equation}
U^{R}(1,t)f(x,y)=U_{2}^{R}(t)f(x,y)=f((x,y)(1,t))=f(x, xt+y),
\end{equation}
and their generators
\begin{equation}
\mathbb{D}^{R}_{1}f=x\partial_{x}f,\>\>\>\>\>\mathbb{ D}^{R}_{2}f=x\partial_{y}f, \>\>\>\>\>[\mathbb{D}^{R}_{1}, \mathbb{D}^{R}_{2}]f=\mathbb{D}^{R}_{1}f,\>\>\>\>f\in \mathcal{G}^{R}.
\end{equation}
Now one can introduce Sobolev spsces
\begin{equation}\label{SobR}
\|f\|_{\mathbf{W}^{m}_{p}(\mathbb{D}_{1}^{R}, \mathbb{D}_{2}^{R}),}=\|f\|_{{\bf L}^{p}(G, d\mu_{r})}+\sum_{(j_{1}, ... ,  j_{m})\in \{1,2\}^{m}}\|\mathbb{D}_{j_{1}}^{R}...\mathbb{D}_{j_{m}}^{R}f\|_{{\bf L}^{p}(G, d\mu_{r})},
\end{equation}
corresponding Garding space $\mathcal{G}^{R}$, and the self-adjoint non-negative Laplacian $\Delta_{R}$
in the space ${\bf L}^{2}(G, d\mu_{r})$
\begin{equation}\label{L}
\Delta_{R}=-\left(\mathbb{D}_{1}^{R}\right)^{2}-\left(\mathbb{D}_{2}^{R}\right)^{2}.
\end{equation}
An analog of Theorem \ref{Sob-Mix} also holds for this operator. 
The mixed modulus of continuity is defined as
\begin{equation}
\Omega_{m,p}^{R}( s, f)=
$$
$$
\sum_{1\leq j_{1},...,j_{m}\leq
2}\sup_{0\leq\tau_{j_{1}}\leq s}...\sup_{0\leq\tau_{j_{m}}\leq
s}\|
\left(U^{R}_{j_{1}}(\tau_{j_{1}})-I\right)...\left(U^{R}_{j_{m}}(\tau_{j_{m}})-I\right)f\|_{{\bf L}^{p}(G, d\mu_{r})},\label{M}
\end{equation}
where $f\in {\bf L}^{p}(G, d\mu_{r}),\ m\in \mathbb{N},  $ and $I$ is the
identity operator.

\subsection{Representation of the $ "ax+b" $ group in spaces ${\bf X}^{p}$}\label{X12}

For $p \in  [1, \infty)$, denote by $\| \cdot \|_{p} $ the norm of the Lebesgue space ${\bf L}^{p}(\mathbb{R}_{+}).$ 
The spaces ${\bf X}^{p}$ comprising all functions $f: \mathbb{R}_{+} \mapsto \mathbb{ C}$  such that $f(\cdot)(\cdot)^{-1/p}\in  {\bf L} ^{p}(\mathbb{R}_{+}) $ with the norm $\|f\|_{{\bf X}^{p} }:= \| f (\cdot)(\cdot)^{-1/p}\|_{p}$.

We  define a representation $U$ of $G$ on a space ${\bf X}^{p}, \>\>1\leq p< \infty$, by using the formula
$$
U(g)f(x)=e^{ib}f(ax),\>\>\>g=(a,b)\in G=\mathbb{R}_{+}\times\mathbb{R}.
$$
When $p=2$ the representation is unitary in the sense that every operator $U(g)$ is unitary with respect to the inner product 
$$
\langle f_{1}, f_{2}\rangle=\int_{0}^{\infty}f_{1}(x)\overline{f_{2}(x)}\frac{dx}{x}.
$$
The elements $g(e^{t},0),\>t\in \mathbb{R},$ form a one-parameter subgroup in $G$. This subgroup has a representation  as one-parameter $C_{0}$-group of operators acting in a corresponding ${\bf X}^{p}$ by the formula
$$
U_{1}(t)f(x)= f(e^{t}x), \>\>t\in \mathbb{R},\>\>1\leq p<\infty.
$$
The infinitesimal operator of this one-parameter group is defined on $C^{\infty}_{0}\cap {\bf X}^{p}$ by the formula 
$$
\mathbb{D}_{1}f(x)=\frac{d}{dt}f(e^{t}x)|_{t=0}=x\frac{d}{dx}f(x),\>\>\>f\in C^{\infty}_{0}\cap {\bf X}^{p},\>\>1\leq p<\infty.
$$
 The subset of elements of $G$ of the form  $g(1,t),\>t\in \mathbb{R},$  is another one-parameter subgroup which extends  to one-parameter $C_{0}$-group of operators acting in a corresponding ${\bf X}^{p}$ by the formula
$$
U_{2}(t)f(x)=e^{itx} f(x), \>\>t\in \mathbb{R},\>\>1\leq p<\infty.
$$
The corresponding infinitesimal operator is defined on $C_{0}^{\infty}\cap {\bf X}^p$ and is given by the formula
$$
\mathbb{D}_{2}f(x)=\frac{d}{dt}e^{itx}f(x)|_{t=0}=ixf(x),\>\>\>f\in  {\bf X}^{p},\>\>1\leq p<\infty.
$$ 
According to the general theory of $C_{0}$-groups in Banach spaces \cite{BB} both operators $\mathbb{D}_{1},  \mathbb{D}_{2}$  admit closures  in ${\bf X}^p$ for which we will use the same notations. 
We will use the notations  $\mathbb{D}_{1}$ for $x\frac{d}{dx}$ and $\mathbb{D}_{2}$ for multiplication by $ix$ in a space ${\bf X}^{p}(\mathbb{R}_{+},\> \frac{dx}{x}),\>1\leq p<\infty.$
\begin{defn}
The Banach space $\mathbf{W}^{m}_{p}(\mathbb{D}_{1}, \mathbb{D}_{2}),\>m\in \mathbb{N},\> 1\leq p<\infty,$ is the set of functions $f$ in ${\bf X}^{p}$ for which the following norm is finite
$$
\|f\|_{\mathbf{W}^{m}_{p}(\mathbb{D}_{1}, \mathbb{D}_{2})}=\|f\|_{{\bf X}^{p}}+\sum_{k=1}^{m}\sum_{(j_{1}, ... ,  j_{k})\in \{1,2\}^{k}}\|\mathbb{D}_{j_{1}}...\mathbb{D}_{j_{k}}f\|_{{\bf X}^{p}}.
$$
\end{defn}
By using closeness of the operators $\mathbb{D}_{j},\>j=1,2,$ one can check that the norm $\|f\|_{\mathbf{W}^{r}_{p}}$ is equivalent to the norm

$$
|||f|||_{\mathbf{W}^{r}_{p}(\mathbb{D}_{1}, \mathbb{D}_{2})}=\|f\|_{{\bf X}^{p}}+\sum_{(j_{1}, ... , j_{r})\in \{1,2\}^{r}} \|\mathbb{D}_{j_{1}}, ..., \mathbb{D}_{j_{r}} \|_{{\bf X}^{p}}.
$$
We note that the operators $\mathbb{D}_{1}, \mathbb{D}_{2}$ on $\mathbf{W}_{p}^{2}(\mathbb{D}_{1}, \mathbb{D}_{2})$ satisfy the relation
$$
\left[\mathbb{D}_{1}, \mathbb{D}_{2}\right]f=\mathbb{D}_{1}\mathbb{D}_{2}f-\mathbb{D}_{2}\mathbb{D}_{1}f=\mathbb{D}_{2}f,\>\>\>f\in \mathbf{W}_{p}^{2}(\mathbb{D}_{1}, \mathbb{D}_{2}),
$$
and they span a Lie algebra which is isomorphic to the Lie algebra of $G$. 
The mixed modulus of continuity is introduced as
\begin{equation}
\Omega_{m,p}( s, f)=
$$
$$
\sum_{1\leq j_{1},...,j_{m}\leq
2}\sup_{0\leq\tau_{j_{1}}\leq s}...\sup_{0\leq\tau_{j_{m}}\leq
s}\|
\left(U_{j_{1}}(\tau_{j_{1}})-I\right)...\left(U_{j_{m}}(\tau_{j_{m}})-I\right)f\|_{{\bf X}^{p}},\label{M}
\end{equation}
where $f\in {\bf X}^{p},\> m\in \mathbb{N},  $ and $I$ is the
identity operator in ${\bf X}^{p},\>\>1\leq p<\infty.$

In the space ${\bf X}^2$ we consider the corresponding Laplace operator $\Delta$ which is defined on the Garding space $\mathcal{G}$ by the formula
\begin{equation}\label{O}
\Delta_{U}=-\mathbb{D}_{1}^{2}-\mathbb{D}_{2}^{2}=-\left(x^{2}\frac{d^{2}}{dx^{2}}+x\frac{d}{dx}-x^{2}\right)
\end{equation}
which has self-adjoint closure from the corresponding Garding space.  Following Theorem is a consequence of Theorem \ref{H-equivalence}.

\begin{thm}\label{equivalence}
 The space $\mathbf{W}_{2}^{m}(\mathbb{D}_{1}, \mathbb{D}_{2}),\>m\in \mathbb{N},$ is isomorphic to the domain 
 $\mathcal{D}(\Delta_{U}^{m/2})$.
 \end{thm}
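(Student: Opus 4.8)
The plan is to read Theorem \ref{equivalence} as the specialization of the general Hilbert-space equivalence Theorem \ref{H-equivalence} to the triple $({\bf X}^{2}, U, \{\mathbb{D}_{1},\mathbb{D}_{2}\})$, so that the proof splits into (a) checking the hypotheses of that theorem and (b), for a self-contained account, carrying out the two norm comparisons by hand. For (a) one records the facts already assembled in Subsection \ref{X12}: $U$ is a strongly continuous unitary representation of $G$ on the Hilbert space ${\bf X}^{2}$; $U_{1}(t)$ and $U_{2}(t)$ are one-parameter unitary groups whose generators, by Stone's theorem, are the skew-adjoint operators $\mathbb{D}_{1}=x\,d/dx$ and $\mathbb{D}_{2}=ix$; the Garding space $\mathcal{G}$ is a common core for every word in $\mathbb{D}_{1},\mathbb{D}_{2}$, is dense in ${\bf X}^{2}$, and is invariant under all $U(g)$; and, by Nelson's theorem on the Laplacian of a unitary representation, $\Delta_{U}=-\mathbb{D}_{1}^{2}-\mathbb{D}_{2}^{2}$ is nonnegative and essentially self-adjoint on $\mathcal{G}$. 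All identities below are first established for $f\in\mathcal{G}$ and then propagated by density and closedness.

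For the inclusion $\mathcal{D}(\Delta_{U}^{m/2})\hookrightarrow\mathbf{W}^{m}_{2}(\mathbb{D}_{1},\mathbb{D}_{2})$ the base case is the identity $\langle\Delta_{U}f,f\rangle=\|\mathbb{D}_{1}f\|^{2}+\|\mathbb{D}_{2}f\|^{2}=\|\Delta_{U}^{1/2}f\|^{2}$ on $\mathcal{G}$, which yields $\|\mathbb{D}_{j}f\|\le\|\Delta_{U}^{1/2}f\|$. One then shows by induction on $m$ that every word $\mathbb{D}_{j_{1}}\cdots\mathbb{D}_{j_{m}}f$ is bounded by $\|f\|_{{\bf X}^{2}}+\|\Delta_{U}^{m/2}f\|_{{\bf X}^{2}}$, combining the base estimate with the fact that each commutator $[\mathbb{D}_{i},\Delta_{U}]$ is a differential operator of order at most two (explicitly $[\mathbb{D}_{1},\Delta_{U}]=-2\mathbb{D}_{2}^{2}$ and $[\mathbb{D}_{2},\Delta_{U}]=2\mathbb{D}_{1}\mathbb{D}_{2}-\mathbb{D}_{2}$, which follow from $[\mathbb{D}_{1},\mathbb{D}_{2}]=\mathbb{D}_{2}$), so that reordering factors via the commutation relations and pulling powers of $\Delta_{U}$ through the word never raises the total order; the intermediate quantities $\|\Delta_{U}^{\ell/2}f\|$, $0\le\ell\le m$, are then absorbed into $\|f\|_{{\bf X}^{2}}+\|\Delta_{U}^{m/2}f\|_{{\bf X}^{2}}$ by the moment (interpolation) inequality for the functional calculus of the nonnegative self-adjoint operator $\Delta_{U}$.

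For the reverse inclusion $\mathbf{W}^{m}_{2}(\mathbb{D}_{1},\mathbb{D}_{2})\hookrightarrow\mathcal{D}(\Delta_{U}^{m/2})$ --- the elliptic-regularity direction --- first take $m=2k$ even: expanding $\Delta_{U}^{k}=(-\mathbb{D}_{1}^{2}-\mathbb{D}_{2}^{2})^{k}$ and reordering with the commutation relations writes it as a finite sum of words of length $\le 2k$, so $\|\Delta_{U}^{k}f\|_{{\bf X}^{2}}\le C\|f\|_{\mathbf{W}^{2k}_{2}}$ on $\mathcal{G}$; since $\mathcal{G}$ is a core for $\Delta_{U}^{k}$ and dense in $\mathbf{W}^{2k}_{2}$ (closedness of the $\mathbb{D}_{j}$), the estimate extends to all of $\mathbf{W}^{2k}_{2}$. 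For $m=2k+1$ odd one shows that $f\in\mathbf{W}^{2k+1}_{2}$ forces $\Delta_{U}^{k}f\in\mathcal{D}(\mathbb{D}_{1})\cap\mathcal{D}(\mathbb{D}_{2})$ --- again by commuting each $\mathbb{D}_{j}$ through $\Delta_{U}^{k}$ with bounded loss of order --- whence $\|\Delta_{U}^{(2k+1)/2}f\|^{2}=\|\mathbb{D}_{1}\Delta_{U}^{k}f\|^{2}+\|\mathbb{D}_{2}\Delta_{U}^{k}f\|^{2}$ is finite and $\lesssim\|f\|^{2}_{\mathbf{W}^{2k+1}_{2}}$; alternatively one invokes that both scales $\mathbf{W}^{m}_{2}(\mathbb{D}_{1},\mathbb{D}_{2})$ and $\mathcal{D}(\Delta_{U}^{m/2})$ interpolate (mean method, index $1/2$) between their even-order members, reducing the odd case to the even one.

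I expect the genuine obstacle to be this regularity half, specifically the odd-order case, together with the soft bookkeeping that makes the formal manipulations legitimate: that $\mathcal{G}$ is simultaneously a core for $\Delta_{U}^{m/2}$ and dense in $\mathbf{W}^{m}_{2}(\mathbb{D}_{1},\mathbb{D}_{2})$, and that the identities among $\mathbb{D}_{1}$, $\mathbb{D}_{2}$ and $\Delta_{U}$ hold on the relevant closed domains and not only on $\mathcal{G}$. These are precisely the points packaged into Theorem \ref{H-equivalence}; once it is cited, the entire argument reduces to the verification in the first paragraph --- that $U$ is a strongly continuous unitary representation of $G$ on ${\bf X}^{2}$ with infinitesimal operators $\mathbb{D}_{1},\mathbb{D}_{2}$ spanning a copy of the Lie algebra of $G$, and that $\Delta_{U}$ possesses a nonnegative self-adjoint closure from $\mathcal{G}$.
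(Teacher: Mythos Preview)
Your proposal is correct and matches the paper's approach exactly: the paper states that Theorem~\ref{equivalence} ``is a consequence of Theorem~\ref{H-equivalence}'' and gives no further argument, so the proof reduces precisely to your part~(a) --- verifying that $U$ is a strongly continuous unitary representation of $G$ on ${\bf X}^{2}$ with generators $\mathbb{D}_{1},\mathbb{D}_{2}$ and self-adjoint Laplacian $\Delta_{U}$. Your part~(b), the self-contained sketch with explicit commutator computations, goes well beyond what the paper supplies (Theorem~\ref{H-equivalence} itself is only referenced to \cite{Pes90a}, \cite{Pes22}), but it is a correct outline of how that general result is actually proved.
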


\section{General framework}\label{G-fr-w}

We considering a strongly continuous bounded representation of the group $ "ax+b" $ in a Banach space ${\bf E}$. 
and  using notations which were introduced in section \ref{Group and Rep}. 
\begin{defn}
The Banach space $\mathbf{E}^{r}=\mathbf{E}^{r}(\mathbb{A}_{1}, \mathbb{A}_{2}),\>r\in \mathbb{N},$ is the set of vectors $f$ in ${\bf E}$ for which the following norm is finite
\begin{equation}\label{Sob-1}
\|f\|_{\mathbf{E}^{r}}=\|f\|_{{\bf E}}+\sum_{k=1}^{r}\sum_{(j_{1}, ... ,  j_{k})\in \{1,2\}^{k}}\|\mathbb{A}_{j_{1}}...\mathbb{A}_{j_{k}}f\|_{{\bf E}}.
\end{equation}
\end{defn}
By using closeness of the operators $\mathbb{A}_{j}$ one can check that the norm $\|f\|_{{\bf E}^{r}}$ is equivalent to the norm
\begin{equation}\label{Sob-2}
|||f|||_{{\bf E}^{r}}=\|f\|_{{\bf E}}+\sum_{(j_{1}, ... , j_{r})\in \{1,2\}^{r}} \|\mathbb{A}_{j_{1}}, ..., \mathbb{A}_{j_{r}} \|_{{\bf E}}.
\end{equation}

 \subsection{$K$-functional and  modulus of continuity }

If $D$ generates in $\mathbf{E}$ a strongly continuous bounded semigroup $T_{D}(t),\> \|T(t)\|\leq 1, \>\>t\geq 0, $ then  the following functional is a natural generalization of the classical modulus of continuity (see \cite{BB})
$$
\omega^{r}_{D}( s, f)= 
\sup_{0\leq\tau\leq s}\|\left(T_{D}(\tau)-I\right)^{r}f\|_{\mathbf{E}},
$$
where $I$ is the
identity operator in ${\bf E}.$   By using the same reasoning as in the classical case (see \cite{Tim}, Ch. 3) one can establish  the following  inequalities 
\begin{equation}\label{ineq 1-1}
\omega_{D}^{r}(s, f)\leq s^{k}\omega_{D}^{r-k}(s, D^{k}f).
\end{equation}
\begin{equation}\label{ineq 2-1}
\omega_{D}^{r}\left(as, f\right)\leq \left(1+a\right)^{r}\omega_{D}^{r}(s, f),\>\> a>0,
\end{equation}
and
\begin{equation}\label{ineq 3-1}
s^{k}\omega_{D}^{r}\left(s, f\right)\leq c\left(s^{r+k}\|f\|_{\bf E}+\omega_{D}^{r+k}\left(s, f\right)\right),
\end{equation}
Now we are dealing with non-commuting one-parameter semigroups and our definition of the corresponding modulus of continuity is the following.
\begin{defn}
 The mixed modulus of continuity of a vector $f\in {\bf E}$ is introduced as
\begin{equation}\label{modulus}
\Omega^{r}( s, f)=
$$
$$
\sum_{1\leq j_{1},...,j_{r}\leq
2}\sup_{0\leq t_{j_{1}}\leq s}...\sup_{0\leq t_{j_{r}}\leq
s}\|
\left(T_{j_{1}}(t_{j_{1}})-I\right)...\left(T_{j_{r}}(t_{j_{r}})-I\right)f\|_{{\bf E}},\>\>r\in \mathbb{N}.
\end{equation}

\end{defn}
In what follows we will need the next two identities which can be easily veryfied.
\begin{lem}\label{Identities}
For any formal variables the following identities hold
\begin{equation}\label{first identity-1}
(a_{1}-1)a_{2}=(a_{1}-1)+(a_{1}-1)(a_{2}-1),
\end{equation}
 \begin{equation}\label{second identity-20}
 a_{1}a_{2}...a_{r}-1=(a_{1}-1)+a_{1}(a_{2}-1)+ ... +a_{1}a_{2} ... a_{r-1}(a_{r}-1).
  \end{equation}
\end{lem}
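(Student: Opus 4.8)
The plan is to verify both identities by direct algebraic manipulation, using nothing beyond distributivity in the (non-commutative) algebra generated by the formal variables $a_{1},\dots ,a_{r}$; in the intended application the $a_{j}$ will be the commuting operators $T_{j}(t_{j})$, but the identities hold purely formally.

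For (\ref{first identity-1}) I would simply expand the right-hand side:
\begin{equation}
(a_{1}-1)+(a_{1}-1)(a_{2}-1)=(a_{1}-1)\bigl(1+(a_{2}-1)\bigr)=(a_{1}-1)a_{2},
\end{equation}
which is the left-hand side. No induction or case analysis is needed here.

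For (\ref{second identity-20}) I would argue by induction on $r$. The base case $r=1$ is the tautology $a_{1}-1=a_{1}-1$. For the inductive step, assuming the identity for $r$, I write
\begin{equation}
a_{1}a_{2}\cdots a_{r}a_{r+1}-1=(a_{1}a_{2}\cdots a_{r})(a_{r+1}-1)+(a_{1}a_{2}\cdots a_{r}-1),
\end{equation}
and then replace $a_{1}a_{2}\cdots a_{r}-1$ by the right-hand side of the inductive hypothesis; collecting the terms gives exactly the claimed expansion with $r+1$ summands. (Equivalently one can observe that the right-hand side of (\ref{second identity-20}) telescopes: writing $a_{1}\cdots a_{k-1}(a_{k}-1)=a_{1}\cdots a_{k}-a_{1}\cdots a_{k-1}$ with the convention that the empty product is $1$, consecutive terms cancel and only $a_{1}\cdots a_{r}-1$ survives.) There is no real obstacle in this lemma; the only point requiring any care is to respect the order of the factors, since the variables are not assumed to commute, which is why the prefactors $a_{1}a_{2}\cdots a_{k-1}$ appear on the left of each difference $(a_{k}-1)$. $\qed$
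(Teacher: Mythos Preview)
Your proof is correct and matches the paper's approach: the paper does not give a proof at all, merely stating that the two identities ``can be easily verified,'' which is exactly what your direct expansion and induction/telescoping argument accomplish.
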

\begin{lem}\label{ineq}

The following inequalities hold  true 
\begin{equation}\label{ineq-1}
\Omega^{r}(s, f)\leq s^{k}C_{0}(r,k)\sum_{1\leq j_{1}, ...,   j_{k}\leq 2}\Omega^{r-k}(s, A_{j_{1}}... A_{j_{k}}f), 
\end{equation}
\begin{equation}\label{ineq-2}
\Omega^{r}\left(as, f\right)\leq C_{1}(a,r)\Omega^{r}(s, f),\>\> a>0,
\end{equation}
\begin{equation}\label{ineq-3}
s^{k}\Omega^{r}\left(s, f\right)\leq C_{2}(r,k)\left(s^{r+k}\|f\|_{\bf E}+\Omega^{r+k}\left(s, f\right)\right).
\end{equation}

\end{lem}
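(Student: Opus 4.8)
The plan is to prove the three inequalities \eqref{ineq-1}, \eqref{ineq-2}, \eqref{ineq-3} by reducing the mixed modulus $\Omega^{r}$ to the one-parameter moduli $\omega^{r}_{D_j}$ for which the classical estimates \eqref{ineq 1-1}–\eqref{ineq 3-1} are already available, using the algebraic identities in Lemma \ref{Identities}. First I would fix notation: write $\Delta_j(t)=T_j(t)-I$, so that a generic summand in $\Omega^{r}(s,f)$ is $\|\Delta_{j_1}(t_{j_1})\cdots\Delta_{j_r}(t_{j_r})f\|_{\bf E}$ with each $t_{j_i}\in[0,s]$ and each $j_i\in\{1,2\}$. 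Since the operators $T_j(t)$ are contractions, any factor $T_j(t)$ (not of the form $T_j(t)-I$) has operator norm $\le 1$, and each factor $\Delta_j(t)$ has norm $\le 2$; these crude bounds, together with the identity \eqref{second identity-20}, will let me rearrange and telescope products at the cost of combinatorial constants $C_0,C_1,C_2$ depending only on $r,k$ (and $a$).

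For \eqref{ineq-2}, the dilation inequality, I would treat one summand at a time. For a fixed multi-index $(j_1,\dots,j_r)$ the factors corresponding to a given index value, say $1$, can be grouped: using $T_1(as)=T_1(s)^a$-type reasoning is not available since $a$ need not be an integer, so instead I would argue as in the classical one-parameter case \eqref{ineq 2-1}, writing $\Delta_1(at) = \sum$ of shifted differences via a Riemann-sum / finite-difference decomposition, or simply invoke \eqref{ineq 2-1} after isolating each one-parameter group. The cleanest route: bound $\sup_{t_{j_i}\le as}\|\Delta_{j_1}(t_{j_1})\cdots\Delta_{j_r}(t_{j_r})f\|$ by first absorbing all-but-one differences into operators of norm $\le 2^{r-1}$, reducing to $\sup_{t\le as}\|\Delta_{j}(t)g\|\le\omega^1_{D_j}(as,g)\le(1+a)\,\omega^1_{D_j}(s,g)$ by \eqref{ineq 2-1}, and then reassembling. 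A more careful bookkeeping keeps all $r$ differences and iterates \eqref{ineq 2-1} factor by factor, giving $C_1(a,r)=(1+a)^r$ times the number of multi-indices $2^r$, but the rough constant is all that is claimed.

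For \eqref{ineq-1}, I would commute a generator past a difference operator. The key computation is $\Delta_j(t)f = \int_0^t T_j(\sigma)A_j f\,d\sigma$ for $f$ in the domain of $A_j$, which gives $\|\Delta_j(t)f\|\le t\,\|A_j f\|$ and more generally, applied to the $k$ outermost (or suitably chosen) difference factors, replaces $s^0$ by $s^k$ and lowers the order of the modulus from $r$ to $r-k$ acting on $A_{j_1}\cdots A_{j_k}f$; one must be careful that the inserted $T_j(\sigma)$ factors do not disturb the remaining $\Delta$'s, which is fine since they are norm-$\le 1$ and can be kept inside under the supremum. Summing over which $k$ positions are differentiated and over the index values produces the sum on the right of \eqref{ineq-1} and the constant $C_0(r,k)$. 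Finally \eqref{ineq-3} is the ``inverse'' estimate: here I would adapt the classical argument of \cite{Tim}, Ch.~3, writing a lower-order difference in terms of a higher-order one plus a controlled remainder — concretely, from $\Delta^{r}=\Delta^{r+k}+(\text{lower-order terms involving }\Delta^r\text{ composed with }T_j\text{'s})$ and a telescoping/averaging identity one isolates $s^k\Omega^r$ on the left, the $\Omega^{r+k}$ term appears directly, and the ``price'' of the inversion is the term $s^{r+k}\|f\|_{\bf E}$ coming from bounding the leftover operator (a polynomial in the $T_j$'s of total degree related to $k$) crudely by a constant. The main obstacle is the non-commutativity: unlike the one-parameter case one cannot simply raise a single group to a power, so every manipulation must be done factor-by-factor inside the product, tracking which index each factor carries and using \eqref{first identity-1}–\eqref{second identity-20} to reorganize mixed products of $\Delta_1$'s and $\Delta_2$'s; the combinatorial constants $C_0,C_1,C_2$ absorb this, but verifying that the reorganization never requires commuting $A_1$ past $A_2$ (which would bring in the bracket $A_2$ by \eqref{lie} and change the order) is the delicate point — in fact the commutator terms are harmless precisely because $[A_1,A_2]=A_2$ lowers no derivative count, so they only contribute to the same-order sum on the right-hand side.
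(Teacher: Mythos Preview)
Your sketch has the right ingredients listed but applies them in the wrong places, and in two spots the argument as written does not go through.

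For \eqref{ineq-1} you use $\Delta_{j}(t)f=\int_0^{t}T_j(\sigma)A_jf\,d\sigma$ on an inner factor and then assert that the inserted $T_j(\sigma)$ ``can be kept inside under the supremum'' because it has norm $\le 1$. That is exactly the step that fails in the non-commuting case: after the substitution you are left with
\[
\Delta_{j_1}(t_{j_1})\cdots\Delta_{j_{r-1}}(t_{j_{r-1}})\,T_{j_r}(\sigma)\,A_{j_r}f,
\]
and $T_{j_r}(\sigma)$ sits \emph{between} the remaining differences and $A_{j_r}f$; it does not commute past them, so you cannot bound this by $\Omega^{r-1}(s,A_{j_r}f)$ by a norm estimate alone. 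The paper's remedy is precisely identity \eqref{first identity-1}: writing $(a_1-1)a_2=(a_1-1)+(a_1-1)(a_2-1)$ with $a_1=T_{j_{r-1}}(t_{j_{r-1}})$, $a_2=T_{j_r}(\sigma)$ eliminates the stray $T_{j_r}(\sigma)$ at the cost of one extra difference, and iterating gives
\[
\Omega^r(s,f)\le s\sum_j\Omega^{r-1}(s,A_jf)+s\sum_j\Omega^{r}(s,A_jf)\le 3s\sum_j\Omega^{r-1}(s,A_jf).
\]
You mention \eqref{first identity-1} at the end of your write-up, but not here where it is actually needed.

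For \eqref{ineq-2} neither of your two routes works. Absorbing all but one $\Delta$ into a bounded operator destroys the modulus structure and you cannot ``reassemble'' $\Omega^r$ afterwards. Iterating \eqref{ineq 2-1} factor by factor runs into the same obstruction as above: once you have handled the outermost factor, the next $\Delta_{j}(t)$ is trapped to the right of a $\Delta$ with a different index, and the one-parameter inequality does not see it. The paper's argument is cleaner: first reduce to integer $a=n$ by monotonicity $\Omega^r(as,f)\le\Omega^r(\lceil a\rceil s,f)$, then for each factor write $T_{j_i}(t_{j_i})=T_{j_i}(\tau_{j_i})^n$ with $\tau_{j_i}=t_{j_i}/n\le s$ and expand $T_{j_i}(\tau_{j_i})^n-I$ via \eqref{second identity-20}. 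After multiplying out one again uses \eqref{first identity-1} to push all stray $T$-factors to the left, leaving products of exactly $r$ differences with parameters $\le s$.

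Finally, your closing paragraph about commuting $A_1$ past $A_2$ and invoking $[A_1,A_2]=A_2$ is a red herring: none of the three inequalities requires moving generators through each other. The entire argument lives at the level of the bounded operators $T_j(t)$ and the purely algebraic identities of Lemma~\ref{Identities}; the Lie-algebra relation never enters. For \eqref{ineq-3} your outline is closer to correct; the paper simply freezes $g_{j_2,\dots,j_r}=(T_{j_2}(t_{j_2})-I)\cdots(T_{j_r}(t_{j_r})-I)f$, applies the one-parameter estimate \eqref{ineq 3-1} to the outermost factor, and iterates.
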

\begin{proof}
To prove the first item we observe that  for every one-parameter $C_{0}$-semigroup $T_{D}$ generated by $D$ the next following formula holds
$$
\left( T_{D}(s)-I\right)f=\int_{0}^{s}T_{D}(\tau)Dfd\tau,\>\>\>f\in \mathcal{D}(D).
$$
Then one has
$$
\Omega^{r}(s,f)\leq 
$$
$$
s\sum_{1\leq j_{1}, ..., j_{r}\leq 2}\sup_{0\leq t_{j_{1}}\leq s} ... \sup_{0\leq \tau \leq s}\|( T_{j_{1}}(t_{j_{1}})-I) ... ( T_{j_{r-1}}(t_{j_{r-1}})-I)T_{j_{r}}(\tau)A_{j_{r}}f\|_{\bf E}.
$$
Multiple applications of the identity
(\ref{first identity-1})
followed by applications of the triangle inequality, give the next estimate
$$
\Omega^{r}(s,f)\leq s\sum_{j=1, 2}\Omega^{r-1}(s, \mathbb{A}_{j}f)+s\sum_{j=1, 2}\Omega^{r}(s, \mathbb{A}_{j}f)\leq 3s\sum_{j=1, 2}\Omega^{r-1}(s, \mathbb{A}_{j}f).
$$
Continue this way we obtain
\begin{equation}
\Omega^{r}(s, f)\leq C_{0}(r,k)s^{k}\sum_{1\leq j_{1}, ...,   j_{k}\leq 2}\Omega^{r-k}(s, \mathbb{A}_{j_{1}}... \mathbb{A}_{j_{k}}f).
\end{equation}
To prove the second item we note first that if $0\leq a\leq b$ then
$$
\Omega^{r}\left(as, f\right)\leq \Omega^{r}\left(bs, f\right).
$$
In particular, it is true for $b=1$.  Next, if $a=n$ is a natural number then in the formula (\ref{modulus}) adapted to the case of $\Omega^{r} (ns, f)$, we replace every difference operator $(T_{j_{i}}(t_{j_{i}})-I)$ by
\begin{equation}\label{n-parent}
\left( \underbrace{T_{j_{i}}(\tau_{j_{i}}) ...T_{j_{i}}(\tau_{j_{i}})}_{n} -I\right)
\end{equation}
and accordingly, every $\sup_{0\leq t_{j_{i}}\leq ns}$ by $\sup_{0\leq \tau_{j_{i}}\leq s}$.
To every term (\ref{n-parent}) we apply the identity (\ref{second identity-20}).
  As a result, we will have
  $$
  \prod_{i=1}^{r}(T_{j_{i}}(t_{j_{i}})-I)f= \prod_{i=1}^{r} \left( \underbrace{T_{j_{i}}(\tau_{j_{i}}) ...T_{j_{i}}(\tau_{j_{i}})}_{n} -I\right)f=
   $$
   $$
    \prod_{i=1}^{r} \left[(T_{j_{i}}(\tau_{j_{i}})-I)+T_{j_{i}}(\tau_{j_{i}})(T_{j_{i}}(\tau_{j_{i}})-I)+ ...+ T_{j_{i}}((r-1)\tau_{j_{i}})  (T_{j_{i}}(\tau_{j_{i}})-I)\right]f.
   $$
  Multiplying out all the brackets (without opening parentheses) one will obtain a linear combinations of some products each of which will contain exactly $r$ differences of the form $(T_{j_{i}}(\tau_{j_{i}})-I)$. Every such a product will be handled  by  using (\ref{first identity-1}). Finally, it will get a representation of the  vector $  \prod_{i=1}^{r}(T_{j_{i}}(t_{j_{i}})-I)f$ as a linear combinations 
  of a terms of the form 
  $$
  T_{j_{i_{1}}}(\tau_{j_{i_{1}}}) ...     T_{j_{i_{l}}}(\tau_{j_{i_{l}}})   \left[      (T_{k_{\nu_{1}}}(\tau_{k_{\nu_{1}}})-I) ... (T_{k_{\nu_{r}}}(\tau_{k_{\nu_{r}}})-I)\right]f.
  $$
  Estimating each of such  terms by norm, and treating all the variables $\tau$  as independent variables varying between  $0$ and $s$, we obtain the inequality (\ref{ineq-2}).

  Let's prove (\ref{ineq-3}).  Introducing the notation 
  $$
  g_{  j_{2}, ..., j_{r}                         } =\left(T_{j_{2}}(t_{j_{2}})-I\right)...
\left(T_{j_{r}}(t_{j_{r}})-I\right)f,
$$ 
and then applying (\ref{ineq 3-1}) we are getting
\begin{equation}\label{modulus}
s^{k}\Omega^{r}( s, f)=
$$
$$
\sum_{1\leq j_{1},...,j_{r}\leq
2}\sup_{0\leq t_{j_{2}}\leq s}...\sup_{0\leq t_{j_{r}}\leq s}\left\{s^{k}\sup_{0\leq t_{j_{1}}\leq s}\|\left(T_{j_{1}}(t_{j_{1}})-I\right)
g_{  j_{2}, ..., j_{r}                         } \|\right\}\leq
$$
$$
\sum_{1\leq j_{1},...,j_{r}\leq
2}\sup_{0\leq t_{j_{2}}\leq s}...\sup_{0\leq t_{j_{r}}\leq s}\left\{s^{k+1}    \| g_{  j_{2}, ..., j_{r}                  }\|+     \sup_{0\leq t_{j_{1}}\leq s}\|\left(T_{j_{1}}(t_{j_{1}})-I\right)^{k+1}
g_{  j_{2}, ..., j_{r}                         } \|               \right\}\leq
$$
$$
\sum_{1\leq j_{1},...,j_{r}\leq
2}\sup_{0\leq t_{j_{2}}\leq s}...\sup_{0\leq t_{j_{r}}\leq s}s^{k+1}    \| g_{  j_{2}, ..., j_{r}                  }\|+ \Omega^{r+k}(s, f).
\end{equation}
 Continuing  this way we will obtain the inequality (\ref{ineq-3}). Lemma is proven.

\end{proof}

For the pair of Banach spaces $({\bf E}, \mathbf{E}^{r}),\>$  the $K$-functor is defined by the formula (see section \ref{Appendix}) 
$$
K(s^{r}, f,  {\bf E}, \mathbf{E}^{r})=\inf_{f=f_{0}+f_{1},f_{0}\in {\bf E},
f_{1}\in\mathbf{E}^{r}}\left(\|f_{0}\|_{{\bf E}}+s^{r}\|f_{1}\|_{\mathbf{E}^{r}}\right).\label{K}
$$
For the proof of the next Theorem see section \ref{proofs}.
\begin{thm}\label{Main-Ineq-000} There exist constants $c>0,\>C>0$, such that for all $f\in {\bf E},\>\>s\geq 0,$
\begin{equation}\label{main-ineq-000}
c \>\Omega^{r}(s, f)\leq K(s^{r}, f, {\bf E}, \>\mathbf{E}^{r})\leq C \left( \Omega^{r}(s, f)+\min(s^{r}, 1)\|f\|_{{\bf E}} \right)
\end{equation}
\end{thm}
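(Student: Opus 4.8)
The plan is to establish the two inequalities of \eqref{main-ineq-000} separately, since they have different flavors: the lower bound is an immediate consequence of the smoothing properties of the semigroups, while the upper bound requires constructing a good decomposition $f = f_0 + f_1$ with $f_1 \in \mathbf{E}^r$.

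First I would prove the left inequality $c\,\Omega^r(s,f) \le K(s^r, f, \mathbf{E}, \mathbf{E}^r)$. Fix an arbitrary decomposition $f = f_0 + f_1$ with $f_0 \in \mathbf{E}$, $f_1 \in \mathbf{E}^r$. By the triangle inequality and the fact that each operator $\left(T_{j_i}(t_{j_i}) - I\right)$ has norm at most $2$ (the representation is assumed contractive), the term in \eqref{modulus} applied to $f_0$ is bounded by $2^r \|f_0\|_{\mathbf{E}}$. For the term applied to $f_1$, I would iterate the identity $\left(T_D(\tau) - I\right)g = \int_0^\tau T_D(u) D g\, du$ once for each of the $r$ difference operators; since $f_1 \in \mathbf{E}^r$, all products $\mathbb{A}_{j_1}\cdots \mathbb{A}_{j_r} f_1$ are defined, and each such integral contributes a factor $s$, giving a bound of the form $C s^r |||f_1|||_{\mathbf{E}^r}$, which by \eqref{Sob-2} is comparable to $s^r \|f_1\|_{\mathbf{E}^r}$. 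Adding the two estimates and taking the infimum over all decompositions yields $\Omega^r(s,f) \le C\bigl(\|f_0\|_{\mathbf{E}} + s^r\|f_1\|_{\mathbf{E}^r}\bigr)$ up to a constant, hence the claim.

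For the right inequality I would construct the decomposition via a generalized Hardy--Steklov averaging operator (the one the introduction promises will appear in Section~\ref{proofs}). The idea is to define, for a parameter $t$ proportional to $s$, an operator built by iterating Steklov-type averages $\frac{1}{t}\int_0^t T_j(u)\,du$ in the two directions $j=1,2$, so as to produce a vector $f_1$ lying in $\mathbf{E}^r$ whose Sobolev norm is controlled by $s^{-r}\Omega^r(s,f)$ (plus a lower-order term), while the remainder $f_0 = f - f_1$ satisfies $\|f_0\|_{\mathbf{E}} \le C\,\Omega^r(s,f)$. Concretely, one writes $I - (\text{average})$ as a combination of the difference operators $T_j(u) - I$ to estimate $f_0$ by the modulus, and one uses that differentiating a Steklov average in the $j$-th direction replaces it by a difference operator divided by $t$, so that applying $\mathbb{A}_{j_1}\cdots\mathbb{A}_{j_r}$ to $f_1$ produces something estimated by $t^{-r}$ times an $r$-th order mixed difference, i.e.\ by $t^{-r}\Omega^r(s,f)$. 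The non-commutativity of $T_1$ and $T_2$, together with the commutation relations such as $[\mathbb{D}_1^L, \mathbb{D}_2^L] = \mathbb{D}_1^L$, forces care in the order of averaging and generates extra lower-order terms; these are absorbed into the $\min(s^r,1)\|f\|_{\mathbf{E}}$ summand, using inequalities \eqref{ineq-1}--\eqref{ineq-3} from Lemma~\ref{ineq} to rebalance powers of $s$ when $s$ is large versus small. Since $K(s^r, f, \mathbf{E}, \mathbf{E}^r) \le \|f_0\|_{\mathbf{E}} + s^r\|f_1\|_{\mathbf{E}^r}$ for this particular choice, the upper bound follows.

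The main obstacle is the construction and analysis of the multidimensional Hardy--Steklov operator in the non-abelian setting: because the two one-parameter groups do not commute, a naive tensor-product Steklov average does not cleanly transform under $\mathbb{A}_{j_1}\cdots\mathbb{A}_{j_r}$, and one must track the commutator terms carefully and check that every error term is either of the form (power of $s$)$\times\Omega^{r'}(s,\cdot)$ with $r' \le r$, controllable by Lemma~\ref{ineq}, or of the form (power of $s$)$\times\|f\|_{\mathbf{E}}$, absorbable into $\min(s^r,1)\|f\|_{\mathbf{E}}$. Getting the bookkeeping of these lower-order terms right — and in particular verifying the $\min(s^r,1)$ truncation rather than a bare $s^r$ — is where the real work lies; the rest is routine semigroup calculus.
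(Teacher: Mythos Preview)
Your plan is correct and matches the paper's strategy closely: the lower bound via the triangle inequality on an arbitrary splitting $f=f_0+f_1$ (using boundedness of the differences on $f_0$ and the integral identity $(T_D(\tau)-I)g=\int_0^\tau T_D(u)Dg\,du$ on $f_1$, which is exactly inequality~\eqref{ineq-1}), and the upper bound via a Hardy--Steklov smoothing operator $\mathcal{H}_r(s)$ producing the splitting $f=(f-\mathcal{H}_r(s)f)+\mathcal{H}_r(s)f$, with Lemma~\ref{ineq} (specifically \eqref{ineq-3}) absorbing the lower-order moduli into $s^r\|f\|_{\mathbf{E}}+\Omega^r(s,f)$.

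One point worth sharpening: the paper does not handle the non-commutativity by tracking generic commutator corrections as you suggest, but instead exploits the explicit group law of ``$ax+b$'' to derive the exact intertwining formula
\[
\mathbb{A}_1^{\,n}\mathbb{A}_2^{\,m}\,T_1(t_1)T_2(t_2)f \;=\; e^{-mt_1}\,T_1(t_1)\,\mathbb{A}_1^{\,n}\,T_2(t_2)\,\mathbb{A}_2^{\,m}f,
\]
so that any product $\mathbb{A}_{j_1}\cdots\mathbb{A}_{j_r}$ (rewritten via $[\mathbb{A}_1,\mathbb{A}_2]=\mathbb{A}_2$ as a combination of ordered monomials $\mathbb{A}_1^n\mathbb{A}_2^m$) passes through the double Steklov average in a completely explicit way; the factor $e^{-mt_1}$ is then integrated by parts. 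This replaces your ``bookkeeping of commutator terms'' by a single closed formula, which is what makes the proof short. Your generic description would also work but would be messier; the paper's route is the cleaner one precisely because the group is two-dimensional and solvable.
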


\subsection{Besov spaces}

It is well known that most of remarkable properties of the so-called Besov functional spaces follow from the fact that they are interpolation spaces between two Sobolev spaces \cite{BB}, \cite{KPS}.  For this reason we define Besov spaces by the formula

\begin{defn}We introduce Besov spaces $\mathbf{B}^{\alpha}_{q}=\left(\mathbf{E},\>\mathbf{E}^{r}\right)^{K}_{\alpha/r, \>q},             \ 0<\alpha<r\in \mathbb{N},\ 1\leq\>q\leq \infty,$ as

\begin{equation}\label{interp}
 \mathbf{B}^{\alpha}_{q}=\left(\mathbf{E},\>\mathbf{E}^{r}\right)^{K}_{\alpha/r, \>q},             \ 0<\alpha<r\in \mathbb{N},\ 1\leq\>q\leq \infty,
\end{equation}
which means it is the space of all vectors in ${\bf E}$ with the norm
or to 
$$
\|f\|_{{\bf E}}+\left(\int_{0}^{\infty}\left(s^{-\alpha}K(s^{r}, f, {\bf E}, \>\mathbf{E}^{r})   \right)^{q}\frac{ds}{s}\right)^{1/q} , \>\>\>1\leq q<\infty,
$$
with the usual modifications for $q=\infty$.

\end{defn}

The following statement is an immediate consequence of Theorem \ref{Main-Ineq-000}.
\begin{thm}

 The Besov space $\mathbf{B}^{\alpha}_{q}$ coincides with the interpolation space $\left({\bf E}, \>\mathbf{E}^{r}\right)^{K}_{\alpha/r,\>q},             \ 0<\alpha<r\in \mathbb{N},\>1\leq  q\leq \infty,$ and its norm (\ref{Bnorm1}), (\ref{Bnorm2}) is equivalent to either of the following norms
\begin{equation}\label{Bnorm3}
\|f\|_{{\bf E}}+\left(\int_{0}^{\infty}(s^{-\alpha}\Omega^{r}(s,
f))^{q} \frac{ds}{s}\right)^{1/q} , \>\>\>1\leq q<\infty,
\end{equation}
with the usual modifications for $q=\infty$.

\end{thm}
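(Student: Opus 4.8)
The plan is to deduce this Besov-space characterization directly from Theorem~\ref{Main-Ineq-000} together with the definition of the $K$-interpolation space in~(\ref{interp}). The statement has two components: first, that $\mathbf{B}^{\alpha}_{q}$ coincides with the $K$-interpolation space $\left({\bf E},\mathbf{E}^{r}\right)^{K}_{\alpha/r,q}$, which is true by the very definition~(\ref{interp}); and second, that the norm in terms of the $K$-functional is equivalent to the norm in terms of the mixed modulus of continuity $\Omega^{r}(s,f)$ given in~(\ref{Bnorm3}). So the real content is the norm equivalence, and the engine is the double inequality
\begin{equation*}
c\,\Omega^{r}(s,f)\leq K(s^{r},f,{\bf E},\mathbf{E}^{r})\leq C\left(\Omega^{r}(s,f)+\min(s^{r},1)\|f\|_{{\bf E}}\right).
\end{equation*}

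The left inequality immediately gives $\Omega^{r}(s,f)\leq c^{-1}K(s^{r},f,{\bf E},\mathbf{E}^{r})$, so substituting into the integral defining the $K$-based Besov norm shows that the $\Omega$-based expression in~(\ref{Bnorm3}) is dominated by the $K$-based one. For the reverse direction I would use the right-hand inequality, which splits the estimate of $K$ into the $\Omega^{r}(s,f)$ term and the term $\min(s^{r},1)\|f\|_{{\bf E}}$. The first term contributes exactly the integral in~(\ref{Bnorm3}). For the second term I must check that
\begin{equation*}
\left(\int_{0}^{\infty}\left(s^{-\alpha}\min(s^{r},1)\right)^{q}\frac{ds}{s}\right)^{1/q}
\end{equation*}
is finite: on $(0,1]$ the integrand behaves like $s^{(r-\alpha)q-1}$, which is integrable since $r>\alpha$; on $[1,\infty)$ it behaves like $s^{-\alpha q-1}$, integrable since $\alpha>0$. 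Hence this contributes only a constant multiple of $\|f\|_{{\bf E}}$, which is already present in both norms, and the reverse domination follows. The case $q=\infty$ is handled the same way with suprema replacing integrals and the boundedness of $s\mapsto s^{-\alpha}\min(s^{r},1)$ on $(0,\infty)$.

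The main (mild) obstacle is purely bookkeeping: one must be careful that the additive $\|f\|_{{\bf E}}$ terms and the constants coming from Theorem~\ref{Main-Ineq-000} and from the convergence of the auxiliary integral are absorbed correctly, so that one genuinely obtains a two-sided equivalence of norms rather than just one inclusion. I would also note that the equivalence of~(\ref{Bnorm3}) with the $K$-functional norm in the definition, via Theorem~\ref{Main-Ineq-000}, is exactly the abstract fact that replacing the $K$-functional by an equivalent function of $s$ inside the interpolation norm yields an equivalent norm; this is standard (see the appendix, section~\ref{Appendix}) and requires no group structure. Thus the whole proof is a short two-step argument: invoke Theorem~\ref{Main-Ineq-000} for the pointwise-in-$s$ equivalence, then integrate, checking the one elementary integral above for the leftover $\min(s^{r},1)\|f\|_{{\bf E}}$ term.
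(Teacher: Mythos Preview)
Your proposal is correct and matches the paper's approach exactly: the paper states that this theorem is ``an immediate consequence of Theorem~\ref{Main-Ineq-000}'' and gives no further details, and what you have written is precisely the spelling-out of that immediate consequence. Your verification that $\int_{0}^{\infty}\bigl(s^{-\alpha}\min(s^{r},1)\bigr)^{q}\,ds/s<\infty$ under the hypotheses $0<\alpha<r$ is the only computation needed, and you have done it correctly.
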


The next result will be proven in section \ref{proofs}.

\begin{thm}\label{Main-000} The following holds true.

\begin{enumerate}

\item The following isomorphism holds true
\begin{equation}\label{reit}
\left({\bf E}, \>\mathbf{E}^{r}\right)^{K}_{\alpha/r,\>q}=\left(\mathbf{E}^{k_{1}},\>\mathbf{E}^{k_{2}}\right)^{K}_{(\alpha-k_{1})/(k_{2}-k_{1}),\>q},
\end{equation}
where $0\leq k_{1}<\alpha<k_{2}\leq r\in \mathbb{N},\>\>1\leq  q\leq \infty.$

\item
For $\alpha\in \mathbb{R}_{+} \>\>1\leq q\leq \infty,$ and $\alpha$ is not integer the Besov space $\mathbf{B}^{\alpha}_{q}$  coincides  with the subspace in ${\bf E}$ of all vectors for which the following norm is finite
\begin{equation}\label{Bnorm1}
\|f\|_{\mathbf{E}^{[\alpha]}}+\sum_{1\leq j_{1},...,j_{[\alpha] }\leq 2}
\left(\int_{0}^{\infty}\left(s^{[\alpha]-\alpha}\Omega^{1}
(s, \>\mathbb{A}_{j_{1}}...\mathbb{A}_{j_{[\alpha]}}f)\right)^{q}\frac{ds}{s}\right)^{1/q},
\end{equation}
where $[\alpha]$ is the integer part of $\alpha$.

\item In the case when 
$\alpha=k\in \mathbb{N}$ is an integer  the Besov space $\mathbf{B}^{\alpha}_{q}, \>\>1\leq q\leq \infty,$ coincides with the subspace in ${\bf E}$ of all the vectors for which the following norm is finite
 (Zygmund condition)
\begin{equation}\label{Bnorm2}
\|f\|_{\mathbf{E}^{k-1}}+ \sum_{1\leq j_{1}, ... ,j_{k-1}\leq 2}
\left(\int_{0}^{\infty}\left(s^{-1}\Omega^{2}(s,\>\mathbb{A}_{j_{1}}...\mathbb{A}_{j_{k-1}}f)\right)
 ^{q}\frac{ds}{s}\right)^{1/q}.
\end{equation}

\end{enumerate}

\end{thm}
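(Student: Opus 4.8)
The plan is to deduce all three items from the interpolation machinery already in place, principally Theorem~\ref{Main-Ineq-000} (the equivalence of the $K$-functional with the mixed modulus $\Omega^{r}$) together with standard facts about $K$-interpolation spaces collected in the Appendix. For item~(1), the reiteration isomorphism, I would invoke the classical reiteration (stability) theorem for the real method: the pair $(\mathbf{E},\mathbf{E}^{r})$ together with the intermediate spaces $\mathbf{E}^{k_{1}}=(\mathbf{E},\mathbf{E}^{r})^{K}_{k_{1}/r,\infty}$-type identifications allows one to rewrite $(\mathbf{E},\mathbf{E}^{r})^{K}_{\alpha/r,q}$ as $(\mathbf{E}^{k_{1}},\mathbf{E}^{k_{2}})^{K}_{\theta,q}$ with the stated $\theta=(\alpha-k_{1})/(k_{2}-k_{1})$. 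The one genuinely non-formal point is that $\mathbf{E}^{k}$ is, up to equivalence of norms, the $K$-interpolation space of the correct exponent between $\mathbf{E}$ and $\mathbf{E}^{r}$; this is where Theorem~\ref{Main-Ineq-000} (applied with $r$ replaced by $k$ and by $r$) and the power-type behaviour of $\Omega^{r}$ from Lemma~\ref{ineq} (inequalities (\ref{ineq-1})--(\ref{ineq-3})) are used to show $K(s^{r},f,\mathbf{E},\mathbf{E}^{r})$ and $s^{r-k}K(s^{k},f,\mathbf{E},\mathbf{E}^{k})$ are comparable in the relevant averaged sense.

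For item~(2), with $\alpha$ non-integer and $[\alpha]$ its integer part, I would first apply item~(1) with $k_{1}=[\alpha]$ and $k_{2}=[\alpha]+1$ to reduce to $\mathbf{B}^{\alpha}_{q}=(\mathbf{E}^{[\alpha]},\mathbf{E}^{[\alpha]+1})^{K}_{\alpha-[\alpha],q}$. Then I would compute the $K$-functional for this pair: using that $\mathbf{E}^{[\alpha]+1}$ is carved out of $\mathbf{E}^{[\alpha]}$ by the finiteness of $\|\mathbb{A}_{j_{1}}\cdots\mathbb{A}_{j_{[\alpha]}}\mathbb{A}_{j}f\|_{\mathbf{E}}$, and invoking Theorem~\ref{Main-Ineq-000} at the level of first differences applied to each derivative $\mathbb{A}_{j_{1}}\cdots\mathbb{A}_{j_{[\alpha]}}f$, one gets $K(s,f,\mathbf{E}^{[\alpha]},\mathbf{E}^{[\alpha]+1})\asymp \sum_{j_{1},\dots,j_{[\alpha]}}\Omega^{1}(s,\mathbb{A}_{j_{1}}\cdots\mathbb{A}_{j_{[\alpha]}}f)+\min(s,1)\|f\|_{\mathbf{E}^{[\alpha]}}$. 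Feeding this into the integral norm (\ref{Bnorm3}) and absorbing the $\min(s,1)\|f\|_{\mathbf{E}^{[\alpha]}}$ term into $\|f\|_{\mathbf{E}^{[\alpha]}}$ (the integral $\int_{0}^{\infty}(s^{[\alpha]-\alpha}\min(s,1))^{q}\,ds/s$ converges precisely because $0<\alpha-[\alpha]<1$) yields the norm (\ref{Bnorm1}).

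For item~(3), the integer case $\alpha=k$, the same reduction via item~(1) to the pair $(\mathbf{E}^{k-1},\mathbf{E}^{k+1})$ with parameter $\theta=1/2$ applies, but now one must use \emph{second-order} differences: the decisive input is inequality (\ref{ineq-3}) with the exponent bookkeeping that forces a second difference $\Omega^{2}$ rather than $\Omega^{1}$ (a first-difference characterization would fail at the integer exponent — the familiar Zygmund phenomenon). Concretely, $K(s^{2},f,\mathbf{E}^{k-1},\mathbf{E}^{k+1})\asymp\sum_{j_{1},\dots,j_{k-1}}\Omega^{2}(s,\mathbb{A}_{j_{1}}\cdots\mathbb{A}_{j_{k-1}}f)+\min(s^{2},1)\|f\|_{\mathbf{E}^{k-1}}$, and substitution into (\ref{Bnorm3}) with $\alpha/r$ replaced by the reiterated parameter gives (\ref{Bnorm2}).

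The main obstacle I anticipate is item~(3): proving the two-sided estimate for the $K$-functional of the pair $(\mathbf{E}^{k-1},\mathbf{E}^{k+1})$ in terms of $\Omega^{2}$ applied to the $(k-1)$-st derivatives. The upper bound requires constructing a near-optimal decomposition $f=f_{0}+f_{1}$ — one would use a Steklov-type / averaging regularization of $f$ (the Hardy-Steklov operator introduced in section~\ref{proofs}) to produce $f_{1}\in\mathbf{E}^{k+1}$ with $\|f_{1}\|_{\mathbf{E}^{k+1}}$ controlled by $s^{-2}$ times second differences — and here the non-commutativity of $\mathbb{A}_{1},\mathbb{A}_{2}$ (the relation $[\mathbb{A}_{1},\mathbb{A}_{2}]=\mathbb{A}_{2}$) means the mixed modulus must be handled with the identities of Lemma~\ref{Identities} rather than a naive product rule, exactly as in the proof of Lemma~\ref{ineq}. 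Once that $K$-functional estimate is secured, the passage to the stated Besov norms is routine Hardy-inequality bookkeeping on the defining integral.
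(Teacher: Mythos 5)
Your proposal is correct and follows essentially the same route as the paper: item (1) comes from the reiteration theorem once the Jackson- and Bernstein-type inequalities of Lemma \ref{ineq-00} place $\mathbf{E}^{k}$ in the class $\theta=k/r$ between $\mathbf{E}$ and $\mathbf{E}^{r}$ (note this is class membership, a two-sided sandwich between the $q=1$ and $q=\infty$ spaces, not an identification of $\mathbf{E}^{k}$ with a single $K$-space), and items (2)--(3) follow by reiterating down to a low-order pair, using continuity of the monomials $\mathbb{A}_{j_{1}}\cdots\mathbb{A}_{j_{[\alpha]}}$ between the interpolation spaces for one direction and the estimate (\ref{ineq-1}) together with Theorem \ref{Main-Ineq-000} for the converse. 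The only cosmetic difference is that you characterize the $K$-functional of the shifted pair $(\mathbf{E}^{[\alpha]},\mathbf{E}^{[\alpha]+1})$ directly, whereas the paper passes through $(\mathbf{E}^{[\alpha]},\mathbf{E}^{r})$ and its image in $(\mathbf{E},\mathbf{E}^{r-[\alpha]})$ under these monomials; both versions rest on the same Hardy-Steklov construction of Lemma \ref{smoothing}.
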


 \section{The case of a unitary representation }\label{H-U}
 
 We keep the notations from the previous section, but assume that $T$ is a unitary representation of the $ "ax+b" $ group $G$ in a Hilbert space ${\bf H}$. As it was mentioned above, the operator 
 $$
 \Delta=-\mathbb{A}_{1}^{2}-\mathbb{A}_{2}^{2}
 $$
admits a self-adjoint non-negative closure from the corresponding Garding space.
The proof of the following theorem is given in \cite{Pes90a}, \cite{Pes22}.
\begin{thm}\label{H-equivalence}
The space $\mathbf{H}^{r}=\mathbf{H}^{r}(\mathbb{A}_{1}, \mathbb{A}_{2})$ with the norm (\ref{Sob-1}) is isomorphic to the domain $\mathcal{D}(\Delta^{r/2})$ equipped with the graph norm.

\end{thm}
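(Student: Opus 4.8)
The plan is to establish the two-sided norm equivalence between $\|f\|_{\mathbf{H}^{r}}$ (defined in (\ref{Sob-1}) as the sum of $L^2$-norms of all iterated products $\mathbb{A}_{j_1}\cdots\mathbb{A}_{j_k}f$ of length $\le r$) and the graph norm $\|f\|_{{\bf H}}+\|\Delta^{r/2}f\|_{{\bf H}}$ of the fractional power of $\Delta=-\mathbb{A}_1^2-\mathbb{A}_2^2$. Since $\mathbb{A}_1,\mathbb{A}_2$ are skew-adjoint (unitarity of $T$) and $\Delta$ is essentially self-adjoint and non-negative on the Garding space $\mathcal{G}$, both sides are first computed on $\mathcal{G}$, which is a common core, and then the equivalence extends by closure. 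The core of the argument is a commutator/elliptic-regularity estimate: one shows that on $\mathcal{G}$, for every word $w=(j_1,\dots,j_k)$ of length $k\le r$,
\begin{equation}\label{ellreg}
\|\mathbb{A}_{j_1}\cdots\mathbb{A}_{j_k}f\|_{{\bf H}}\le C\big(\|f\|_{{\bf H}}+\|\Delta^{k/2}f\|_{{\bf H}}\big),
\end{equation}
and conversely that $\|\Delta^{r/2}f\|_{{\bf H}}$ is controlled by $\|f\|_{\mathbf{H}^{r}}$.

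For the easy direction, $\Delta^{r/2}$ is, up to lower-order terms, a linear combination of products of $\mathbb{A}_1,\mathbb{A}_2$ of length $r$ when $r$ is even: indeed $\Delta^{r/2}=(-\mathbb{A}_1^2-\mathbb{A}_2^2)^{r/2}$ expands, using the commutation relation $[\mathbb{A}_1,\mathbb{A}_2]=\mathbb{A}_2$ (the Lie bracket of $\mathbf g$), into a sum of monomials of degree $\le r$, each of which is bounded by $|||f|||_{\mathbf{H}^{r}}$, hence by the equivalent norm $\|f\|_{\mathbf{H}^{r}}$. For odd $r$ one interpolates: $\|\Delta^{r/2}f\|\le C\|\Delta^{(r+1)/2}f\|^{1/2}\|\Delta^{(r-1)/2}f\|^{1/2}$ by the spectral theorem and the moment inequality, reducing to the even case. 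For the hard direction one argues by induction on $k$. The base case $k=1$ is the standard estimate $\|\mathbb{A}_jf\|^2=\langle -\mathbb{A}_j^2 f,f\rangle\le\langle\Delta f,f\rangle\le\|f\|\,\|\Delta f\|\le C(\|f\|^2+\|\Delta f\|^2)$, using skew-adjointness and $0\le -\mathbb{A}_j^2\le\Delta$. For the inductive step, given a word of length $k$, one wants $\|\mathbb{A}_{j_1}\cdots\mathbb{A}_{j_k}f\|\le C(\|f\|+\|\Delta^{k/2}f\|)$; writing $g=\mathbb{A}_{j_2}\cdots\mathbb{A}_{j_k}f$ one needs $\|\mathbb{A}_{j_1}g\|\le C\|\Delta^{1/2}g\|+\text{l.o.t.}$, and then must bound $\|\Delta^{1/2}g\|^2=\langle\Delta g,g\rangle$; the commutator $[\Delta,\mathbb{A}_{j_2}\cdots\mathbb{A}_{j_k}]$ is, by repeated use of $[\mathbb{A}_1,\mathbb{A}_2]=\mathbb{A}_2$, a differential operator of order $\le k$ so that $\langle\Delta g,g\rangle$ is dominated by $\langle\Delta^{k/2}f,\Delta^{k/2}f\rangle$ plus lower-order words already handled by the induction hypothesis. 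This is exactly the Nelson–Stinespring type elliptic estimate for the Laplacian of a finite-dimensional Lie algebra representation; since $\mathbf g$ is the two-dimensional solvable algebra with the single relation (\ref{lie}), all commutator bookkeeping is completely explicit.

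The main obstacle is the commutator control in the inductive step: one must verify that commuting $\Delta$ past a length-$(k-1)$ word produces only terms of degree $\le k$ (so nothing of degree $k+1$ appears) and that those terms are genuinely lower order in the $\Delta$-scale, i.e. estimable by $\|\Delta^{j/2}f\|$ with $j\le k$. Concretely I would prove, by a short induction on word length using only (\ref{lie}), the structural lemma that $[\Delta,\mathbb{A}_{j_1}\cdots\mathbb{A}_{j_m}]$ lies in the span of words of length $\le m+1$, and then combine it with the spectral-theorem moment inequality $\|\Delta^{j/2}f\|\le\|f\|^{1-j/k}\|\Delta^{k/2}f\|^{j/k}$ to close the estimate (\ref{ellreg}). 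Once (\ref{ellreg}) and its converse are in hand on $\mathcal{G}$, density of $\mathcal{G}$ in both $\mathbf{H}^{r}$ and $\mathcal{D}(\Delta^{r/2})$ (each being a core, the former by construction of the Garding space, the latter since $\mathcal{G}$ is a core for $\Delta$ and hence for every $\Delta^{r/2}$) promotes the inequalities to the full spaces, yielding the claimed isomorphism. I would also remark that this is precisely the general statement proved in \cite{Pes90a}, \cite{Pes22}, specialized here to the $"ax+b"$ group where the proof is elementary.
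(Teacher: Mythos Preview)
The paper does not actually prove this theorem: it simply cites \cite{Pes90a}, \cite{Pes22} and moves on. Your proposal therefore supplies strictly more than the paper does, and the outline you give --- the Nelson--Stinespring elliptic regularity argument based on the commutator structure $[\mathbb{A}_1,\mathbb{A}_2]=\mathbb{A}_2$, run on the Garding space and then closed up --- is the standard and correct approach. In particular your structural lemma, that $[\Delta,\mathbb{A}_{j_1}\cdots\mathbb{A}_{j_m}]$ lands in the span of words of length $\le m+1$, is exactly right here (since $[\Delta,\mathbb{A}_j]$ is a degree-two polynomial in $\mathbb{A}_1,\mathbb{A}_2$), and this is what drives the induction for the hard direction.

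There is one small gap worth fixing. In the easy direction for odd $r$ you invoke the moment inequality $\|\Delta^{r/2}f\|\le\|\Delta^{(r+1)/2}f\|^{1/2}\|\Delta^{(r-1)/2}f\|^{1/2}$ and then appeal to the even case on each factor. But the even case only gives $\|\Delta^{(r+1)/2}f\|\le C\|f\|_{\mathbf{H}^{r+1}}$, which is a \emph{stronger} norm than $\|f\|_{\mathbf{H}^{r}}$; the product bound therefore does not close. The clean fix is direct: for $r=2m+1$ write
\[
\|\Delta^{r/2}f\|^{2}=\|\Delta^{1/2}(\Delta^{m}f)\|^{2}=\sum_{j=1}^{2}\|\mathbb{A}_{j}\Delta^{m}f\|^{2},
\]
and observe that each $\mathbb{A}_{j}\Delta^{m}$ is a noncommutative polynomial of degree $2m+1=r$ in $\mathbb{A}_{1},\mathbb{A}_{2}$, hence is bounded by $C\|f\|_{\mathbf{H}^{r}}$. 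With that correction your argument is complete and is precisely the kind of elementary, self-contained proof the paper advertises but does not include.
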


\subsection{Paley-Wiener vectors in ${\bf H}$}

In the next definition we introduce the  Paley-Wiener vectors  associated with $\Delta$.

\begin{defn}\label{PWvector}
We say that a vector $f \in {\bf H}$ belongs to the  Paley-Wiener space ${\bf PW}_{\omega}\left(\Delta^{1/2}\right)$
 if and only if for every $g\in {\bf H}$ the scalar-valued function of the real variable  $ t \mapsto
\langle e^{it\Delta}f,g\rangle$ has an extension to the complex
plane as an entire function of the exponential type $\omega$.

\end{defn}
The next theorem contains generalizations of several results
from  classical harmonic analysis (in particular  the
Paley-Wiener theorem). It follows from our  results in
\cite{Pes00}.
\begin{thm}\label{PWproprties}
The following statements hold:
\begin{enumerate}

\item  the space ${\bf PW}_{\omega}\left(\Delta^{1/2}\right)$ is a linear closed subspace in
${\bf H}$,
\item the space %set % hgfei
 $\bigcup _{ \sigma >0}{\bf PW}_{\omega}\left(\Delta^{1/2}\right)$
 is dense in ${\bf H}$.

\item the space 
 ${\bf PW}_{\omega}\left(\Delta^{1/2}\right)$  is the image space of the projection operator ${\bf 1}_{[0,\>\omega]}(\Delta^{1/2})$ (to be understood in the sense of operational calculus). 

\item (Bernstein inequality)   $f \in {\bf PW}_{\omega}\left(\Delta^{1/2}\right)$ if and only if
$ f \in \mathcal{D}^{\infty}(\Delta)=\bigcap_{k=1}^{\infty}\mathcal{D}(\Delta^{k})$,
and the following Bernstein-type inequalities  holds true
\begin{equation}\label{Bern0}
\|\Delta^{s/2}f\|_{{\bf H}} \leq \omega^{s}\|f\|_{{\bf H}}  \quad \mbox{for all} \, \,  s\in \mathbb{R}_{+};
\end{equation}
\item (Riesz-Boas  interpolation formula) $f \in {\bf PW}_{\omega}\left(\Delta^{1/2}\right)$ if
and only if  $ f \in \mathcal{D}^{\infty}(\Delta^{\infty})$ and the
following Riesz-Boas interpolation formula holds for all $\omega> 0$:
\begin{equation}  \label{Rieszn}
i\sqrt{\Delta}f=\frac{\omega}{\pi^{2}}\sum_{k\in\mathbb{Z}}\frac{(-1)^{k-1}}{(k-1/2)^{2}}
e^{i\left(\frac{\pi}{\omega}(k-1/2)\right)\sqrt{\Delta}}f.
\end{equation}
\end{enumerate}
\end{thm}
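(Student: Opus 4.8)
The plan is to prove Theorem \ref{PWproprties} by transferring the classical one-variable Paley--Wiener and Bernstein machinery to the operator $\sqrt{\Delta}$ via the spectral theorem. Since $\Delta$ is self-adjoint and non-negative, there is a projection-valued measure $E_\lambda$ on $[0,\infty)$ with $\Delta=\int_0^\infty \lambda\, dE_\lambda$, and for $f\in{\bf H}$ and $g\in{\bf H}$ the scalar function $t\mapsto\langle e^{it\Delta}f,g\rangle=\int_0^\infty e^{it\lambda}\,d\langle E_\lambda f,g\rangle$ is the Fourier--Stieltjes transform of a finite complex measure supported on $[0,\infty)$. The key observation is that $f\in{\bf PW}_\omega(\Delta^{1/2})$ if and only if this measure is supported in $[0,\omega^2]$ for every $g$, which by polarization is equivalent to $f$ lying in the range of $E_{[0,\omega^2]}={\bf 1}_{[0,\omega]}(\Delta^{1/2})$. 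This already yields item (3), and items (1) and (2) follow immediately: the range of an orthogonal projection is a closed subspace, and $\bigcup_{\omega>0}E_{[0,\omega^2]}{\bf H}$ is dense because $E_{[0,\omega^2]}f\to f$ as $\omega\to\infty$ by the spectral theorem. I would establish the range characterization first, using the classical fact (a form of the Paley--Wiener theorem for measures, or the Bernstein theorem on the real line) that a bounded function on $\mathbb{R}$ extends to an entire function of exponential type $\le\omega$ precisely when it is a Fourier--Stieltjes transform of a measure on $[-\omega,\omega]$.

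Next I would derive the Bernstein inequality (item (4)). If $f=E_{[0,\omega^2]}f$, then for any $s\ge0$,
\begin{equation}\label{Bern-proof}
\|\Delta^{s/2}f\|_{{\bf H}}^2=\int_0^{\omega^2}\lambda^{s}\,d\langle E_\lambda f,f\rangle\le \omega^{2s}\int_0^{\omega^2}d\langle E_\lambda f,f\rangle=\omega^{2s}\|f\|_{{\bf H}}^2,
\end{equation}
which is \eqref{Bern0}; in particular $f\in\mathcal{D}^\infty(\Delta)$. Conversely, if $f\in\mathcal{D}^\infty(\Delta)$ satisfies $\|\Delta^{k}f\|\le\omega^{2k}\|f\|$ for all $k$, then the spectral measure $d\langle E_\lambda f,f\rangle$ cannot charge $(\omega^2,\infty)$: if it did, there would be $\varepsilon>0$ with $\langle E_{(\omega^2+\varepsilon,\infty)}f,f\rangle>0$, and then $\|\Delta^k f\|^2\ge (\omega^2+\varepsilon)^{2k}\langle E_{(\omega^2+\varepsilon,\infty)}f,f\rangle$, contradicting the polynomial bound for large $k$. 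Hence $f=E_{[0,\omega^2]}f\in{\bf PW}_\omega(\Delta^{1/2})$. This is the standard iteration argument; the only point requiring care is passing from $s$ a non-negative real to the integer powers that drive the converse, which is handled by monotonicity of $\lambda\mapsto\lambda^s$.

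Finally, for the Riesz--Boas interpolation formula (item (5)), the idea is again spectral: fix $f\in{\bf PW}_\omega(\Delta^{1/2})$ and test against arbitrary $g\in{\bf H}$. It suffices to verify, for each $\lambda\in[0,\omega^2]$ with $\mu=\sqrt\lambda\in[0,\omega]$, the scalar identity
\begin{equation}\label{RB-scalar}
i\mu=\frac{\omega}{\pi^2}\sum_{k\in\mathbb{Z}}\frac{(-1)^{k-1}}{(k-1/2)^2}\,e^{i(\pi(k-1/2)/\omega)\mu},
\end{equation}
and then integrate $d\langle E_\lambda f,g\rangle$ over $[0,\omega^2]$, justifying the interchange of sum and spectral integral by the absolute convergence of $\sum_k (k-1/2)^{-2}$ together with $\|e^{i\theta\sqrt\Delta}E_{[0,\omega^2]}\|\le1$ (so the partial sums converge in operator norm on ${\bf PW}_\omega$). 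Identity \eqref{RB-scalar} is the classical Riesz interpolation formula for the derivative of a trigonometric-type entire function of exponential type, applied to the function $\mu\mapsto e^{i\mu t}$ at $t=0$ — it expresses $\frac{d}{dt}e^{i\mu t}|_{t=0}=i\mu$ as a sampling series at the half-integer nodes $\pi(k-1/2)/\omega$. Conversely, if $f\in\mathcal{D}^\infty(\Delta)$ and \eqref{Rieszn} holds, one shows $f$ must be bandlimited by comparing growth: the right-hand side defines a bounded operator applied to $f$, forcing the Bernstein bound and hence $f\in{\bf PW}_\omega(\Delta^{1/2})$ by item (4).

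The main obstacle I anticipate is not any single computation but the careful justification of the operator-valued manipulations: specifically, (a) proving the range characterization rigorously, i.e. that the scalar Paley--Wiener condition holding for \emph{every} $g$ upgrades to the membership $f\in E_{[0,\omega^2]}{\bf H}$ (this needs the observation that exponential type is unaffected by the polarization that recovers $\langle E_\lambda f,f\rangle$ from the mixed quantities, plus the fact that a jointly-in-$g$ bound on supports localizes the measure $d\langle E_\lambda f,\cdot\rangle$); and (b) in \eqref{Rieszn}, establishing that the symmetric partial sums converge in ${\bf H}$ to $i\sqrt\Delta f$ — here one leans on $f\in\mathcal{D}(\Delta^{1/2})$ with spectrum in $[0,\omega^2]$ so that $\sqrt\Delta f$ is a genuine vector and the scalar series \eqref{RB-scalar} converges uniformly in $\mu$ on the compact interval $[0,\omega]$. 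Everything else is a routine transcription of one-dimensional Fourier analysis through the spectral calculus, and all of it may invoke Theorem \ref{H-equivalence} to identify $\mathcal{D}^\infty(\Delta)$ with the smooth vectors of the representation when convenient.
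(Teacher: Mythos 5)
Your proposal is essentially correct, and it supplies a genuine proof where the paper gives none: in the text Theorem \ref{PWproprties} is simply asserted to follow from the results of \cite{Pes00}, so there is no in-paper argument to compare against. Your route — identify ${\bf PW}_{\omega}(\Delta^{1/2})$ with the range of a spectral projection, then read off closedness, density, Bernstein, and the Riesz--Boas formula from the functional calculus — is the standard one and is the same machinery the cited reference uses. The individual steps are sound: the polarization argument upgrading the scalar support condition to $f=E_{[0,\omega^{2}]}f$, the iteration argument for the converse of Bernstein, and the reduction of \eqref{Rieszn} to the scalar sampling identity for $\mu\mapsto e^{i\mu t}$ (with $\sum_{k\in\mathbb{Z}}(k-1/2)^{-2}=\pi^{2}$ giving $\|B_{\omega}\|\leq\omega$ for the right-hand-side operator, which drives the converse of item (5) after commuting with powers of $\Delta$).

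One bookkeeping point deserves care. Definition \ref{PWvector} as printed uses $t\mapsto\langle e^{it\Delta}f,g\rangle$; if one takes that literally, exponential type $\omega$ localizes the spectral measure of $\Delta$ (not of $\Delta^{1/2}$) to $[0,\omega]$, so the resulting space would be the range of ${\bf 1}_{[0,\omega]}(\Delta)$, which is inconsistent with items (3)--(5) of the theorem. You silently work with the convention $t\mapsto\langle e^{it\sqrt{\Delta}}f,g\rangle$ (which is what makes items (3)--(5) true, and which is the form the definition takes elsewhere in the author's work), yet your displayed formula still writes $e^{it\Delta}$ while concluding support in $[0,\omega^{2}]$; as written that one line is internally inconsistent. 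State explicitly that the group being analyzed is $e^{it\sqrt{\Delta}}$, so that type $\leq\omega$ localizes the spectral measure of $\sqrt{\Delta}$ to $[0,\omega]$, equivalently that of $\Delta$ to $[0,\omega^{2}]$. Also, the ``classical fact'' you invoke should be phrased in the one direction you actually need: a function that is \emph{already} the Fourier--Stieltjes transform of a finite measure and extends to an entire function of exponential type $\leq\omega$ has that measure supported in $[-\omega,\omega]$ (Paley--Wiener--Schwartz applied to the distributional Fourier transform); the converse implication ``bounded of type $\leq\omega$ implies Fourier--Stieltjes transform of a measure'' is not true in general and is not needed here.
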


\subsection{A Jackson-type inequality}\label{J}

We are using  the Schr\"{o}dinger  group $e^{it\Delta}$ to  introduce  the modulus of continuity
\begin{equation}\label{mod-L}
\omega_{\Delta}^{r}(t,f)=\sup_{0\leq \tau\leq t}\left\|\left(e^{it\Delta}-I\right)^{r}f\right\|_{{\bf H}}.
\end{equation}
The best approximation functional $\mathcal{E}_{\Delta}(s,f)$ is defined as
$$
\mathcal{E}_{\Delta}(\sigma, f)=\inf_{g\in {\bf PW}_{\sigma}(\Delta^{1/2})}\|f-g\|_{{\bf H}}.
$$
In \cite{Pes22} we proved  the following  Jackson-type estimate which holds for any self-adjoint operator $\mathcal{L}$
\begin{equation}\label{Schrod}
\mathcal{E}_{\mathcal{L}}(\sigma, f)\leq C(\mathcal{L})\omega_{\mathcal{L}}^{r}(\sigma^{-1}, f).
\end{equation}
Using the following well known inequalities which hold for every generator of a bounded $C_{0}$-semigroup (see \cite{BB}, Ch. 3), we can write
\begin{equation}\label{K-L-ineq}
\omega_{\Delta}^{r}(s, f)\leq c_{1}K\left(s^{r},f, {\bf H}, \mathcal{D}(\Delta^{r/2})\right)\leq C_{1}\left(\omega_{\Delta}^{r}(s, f)+\min(s^{r}, 1) \|f\|_{{\bf H}}\right),
\end{equation}
where $\mathcal{D}(\Delta^{r/2}) $ is  the domain  of the operator $\Delta^{r/2}$ with the graph norm $\|f||_{{\bf H}}+\|\Delta^{r/2}f\|_{{\bf H}}$. In addition, 
Theorem \ref{H-equivalence} implies  existence of a constant $C_{2}>0$ such that 
 \begin{equation}\label{C3}
 K\left(s^{r},f, {\bf H}, \mathcal{D}(\Delta^{r/2})\right)\leq C_{2}K\left(s^{r},f, {\bf H}, \mathbf{H}^{r}\right),\>\>\>f\in {\bf H}.
 \end{equation}
Thus we obtain 
$$
\mathcal{E}_{\Delta}(\sigma, f)\leq C(\Delta)\omega_{\Delta}^{r}(\sigma^{-1}, f)\leq C(\Delta)c_{1}K\left(\sigma^{-r},f, {\bf H}, \mathcal{D}(\Delta^{r/2})\right)\leq 
$$
$$
C(\Delta)c_{1}C_{2}K\left(\sigma^{-r},f, {\bf H}, \mathbf{H}^{r}\right)\leq 
$$
$$
C(\Delta)c_{1}C_{2}C_{1}\left(\Omega_{r}(\sigma^{-1},f)+ \min(\sigma^{-r}, 1)\|f\|_{{\bf H}}\right),
$$
where Theorem \ref{H-equivalence} was used. Now we can formulate our Jackson-type theorem.

\begin{thm}\label{main-Jack}There exists a constant $C>0$ which is independent on $f\in {\bf H}$ such that
\begin{equation}\label{main}
\mathcal{E}_{\Delta}(\sigma, f)\leq C\left(\Omega_{r}( \sigma^{-1}, f)+\min (\sigma^{-r}, 1)\|f\|_{\bf H}\right).
\end{equation}

\end{thm}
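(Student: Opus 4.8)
The plan is to prove Theorem \ref{main-Jack} by simply assembling the chain of inequalities that has already been displayed in the text immediately before the statement, so that what remains is to verify that each link is legitimate and that the resulting constant is indeed independent of $f$. Concretely, I would proceed in four steps. First, apply the Jackson-type estimate \eqref{Schrod} with $\mathcal{L}=\Delta$ and $\sigma^{-1}$ in place of $\sigma^{-1}$: this gives $\mathcal{E}_{\Delta}(\sigma,f)\leq C(\Delta)\,\omega_{\Delta}^{r}(\sigma^{-1},f)$, where $\omega_{\Delta}^{r}$ is the Schr\"odinger-group modulus \eqref{mod-L}. Here I am invoking \eqref{Schrod} as a black box valid for any self-adjoint operator, as stated; the point to check is only that $\Delta$ is self-adjoint and nonnegative, which was recorded at the start of Section \ref{H-U}.

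Second, I would convert the scalar modulus $\omega_{\Delta}^{r}$ into a $K$-functional using the left half of \eqref{K-L-ineq}, which holds because $i\Delta$ generates the bounded $C_0$-group $e^{it\Delta}$ on the Hilbert space ${\bf H}$ (unitary, hence bounded), so the classical equivalence between semigroup/group moduli and $K$-functionals from \cite{BB}, Ch.~3, applies. Third, I would pass from the $K$-functional built on the graph-norm domain $\mathcal{D}(\Delta^{r/2})$ to the $K$-functional built on the Sobolev-type space ${\bf H}^{r}={\bf H}^{r}(\mathbb{A}_1,\mathbb{A}_2)$, using \eqref{C3}; that inequality is itself a consequence of Theorem \ref{H-equivalence}, which asserts the isomorphism of ${\bf H}^{r}$ with $\mathcal{D}(\Delta^{r/2})$, since an isomorphism of the ``smooth'' endpoint spaces produces a two-sided comparison of the associated $K$-functionals with constants depending only on the isomorphism constants.

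Fourth and finally, I would bound $K(\sigma^{-r},f,{\bf H},{\bf H}^{r})$ from above by $C(\Omega^{r}(\sigma^{-1},f)+\min(\sigma^{-r},1)\|f\|_{{\bf H}})$ via the right-hand inequality of Theorem \ref{Main-Ineq-000} (applied with ${\bf E}={\bf H}$ and $s=\sigma^{-1}$), which is exactly the nontrivial equivalence between the mixed modulus of continuity and the $K$-functional established in Section \ref{proofs}. Multiplying the four constants $C(\Delta)$, $c_1$, $C_2$, $C_1$, $C$ together yields a single constant $C>0$ depending only on the representation and on $r$, not on $f$, which is precisely the claim. All of this is already laid out in the displayed computation preceding the theorem, so the ``proof'' is essentially a one-line reference to that chain together with the observation that the product of finitely many $f$-independent constants is $f$-independent.

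The only genuine subtlety — the part I would be most careful about — is the bookkeeping of which modulus appears where: the scalar modulus $\omega_{\Delta}^{r}$ in \eqref{mod-L} is built from the one-parameter group $e^{it\Delta}$ generated by the Laplacian, whereas the mixed modulus $\Omega^{r}$ in \eqref{modulus} is built from the two non-commuting one-parameter groups $T_1,T_2$; these are a priori different objects, and the passage between them is not direct but routed entirely through $K$-functionals (steps two through four). Thus the real content is that both moduli are equivalent to (comparable with) the \emph{same} $K$-functional up to the $\min(s^r,1)\|f\|$ correction term, the first equivalence being classical (\cite{BB}) and the second being Theorem \ref{Main-Ineq-000}; once this is acknowledged there is no remaining obstacle, and the theorem follows. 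I would therefore present the proof as: ``Combine \eqref{Schrod}, \eqref{K-L-ineq}, \eqref{C3}, and Theorem \ref{Main-Ineq-000} exactly as in the computation above; the constant $C=C(\Delta)c_1C_2C_1\cdot C$ is independent of $f$.''
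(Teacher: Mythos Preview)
Your proposal is correct and follows exactly the approach of the paper: the proof is precisely the four-link chain \eqref{Schrod} $\to$ \eqref{K-L-ineq} $\to$ \eqref{C3} $\to$ Theorem~\ref{Main-Ineq-000} already displayed before the theorem, and your identification of which result justifies each step (in particular that the final passage to $\Omega^{r}$ uses Theorem~\ref{Main-Ineq-000} rather than Theorem~\ref{H-equivalence}) is accurate.
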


\begin{remark}
It is important to notice that since $\Omega_{r}( \tau, f)$ cannot be of order $o(\tau^{r})$ when $\tau\rightarrow 0$ (unless $f$ is invariant), the behavior of the right-hand side in (\ref{main}) is determined by the first term when $\sigma\rightarrow \infty$. In particular, if $f\in \mathbf{H}^{r}$, then due to the inequality
$$
\Omega^{r}(s, f)\leq C_{0}(r,k)s^{-k}\sum_{1\leq j_{1},...,j_{k}\leq 2}\Omega^{r-k}(s, \mathbb{A}_{j_{1}}...\mathbb{A}_{j_{k}}f),\>\>\>\>\>0\leq k\leq r,
$$
one has the best possible estimate
$$
\mathcal{E}_{\Delta}(\sigma, f)\leq C\Omega_{r}(\sigma, f)\leq C\sigma^{-r}\|f\|_{\mathbf{H}^{r}}.
$$

\end{remark}

\section{Comparison with the previous examples}

In subsection \ref{L12}  we have ${\bf E}= L^{p}(\mathbb{R}_{+}\times \mathbb{R}, \>\frac{1}{x^{2}}dx\>dy),\>1\leq p<\infty$. For $g\in G$ with $g=(a,b)\in \mathbb{R}_{+}\times\mathbb{R}$, 
$$
T(g)f(x,y)=U^{L}(a,b)f(x,y)=f(ax, ay+b),
$$
$$
T_{1}(t)f(x,y)=U_{1}^{L}(t)f(x,y)=f(e^{t}x, e^{t}y),\>\>\>
$$
$$
T_{2}(t)f(x,y)=U_{2}^{L}(t)f(x,y)=f(x, y+t),
$$
$$
\mathbb{A}_{1}=\mathbb{D}_{1}^{L}=x\partial_{x}+y\partial_{y},\>\mathbb{A}_{2}=\mathbb{D}_{2}^{L}=\partial_{y}. 
$$
The Hilbert case corresponds to $p=2$ and
$$
\Delta=\Delta^{L}=-(x\partial_{x}+y\partial_{y})^{2}-(\partial_{y})^{2}=
$$
$$
-(1+y^{2})\partial_{yy}-x^{2}\partial_{xx}-2xy\partial_{xy}-x\partial_{x}-y\partial_{y}.
$$
In subsection \ref{R12}  we have ${\bf E}= L^{p}(\mathbb{R}_{+}\times \mathbb{R}, \>\frac{1}{x}dx\>dy),\>1\leq p<\infty$.  For $g\in G$ with $g=(a,b)\in \mathbb{R}_{+}\times\mathbb{R}$, 
$$
T(g)f(x,y)=U^{R}(a,b)f(x,y)=f(ax, bx+y),
$$
$$
T_{1}(t)f(x,y)=U_{1}^{R}(t)f(x,y)=f(e^{t}x, y),
$$
$$
T_{2}(t)f(x,y)=U_{2}^{R}(t)f(x,y)=f(x, tx+t),
$$
$$
\mathbb{A}_{1}=\mathbb{D}_{1}^{R}=x\partial_{x},\>\mathbb{A}_{2}=\mathbb{D}_{2}^{R}=x\partial_{y}. 
$$
The Hilbert case corresponds to $p=2$ and
$$
\Delta=\Delta^{R}=-(x\partial_{x})^{2}-(x\partial_{y})^{2}=-x^{2}(\partial_{xx}+\partial_{yy})-x\partial_{x}.
$$
In subsection \ref{X12} we have $\>{\bf E}={\bf X}^{p}={\bf L}^{p}\left(\mathbb{R}_{+}, \frac{dx}{x}\right),\>1\leq p<\infty.$ For $g\in G$ with $g=(a,b)\in \mathbb{R}_{+}\times\mathbb{R}$, 
$$
T(g)f(x)=U(a,b)f(x)=e^{ib}f(ax),
$$
$$
T_{1}(t)f(x)=U_{1}(t)f(x)=f(e^{t}x),
$$
$$
T_{2}(t)f(x)=U_{2}(t)f(x)=e^{itx}f(x),
$$
$$
\mathbb{A}_{1}=\mathbb{D}_{1}=x\frac{d}{dx},\>\mathbb{A}_{2}=\mathbb{D}_{2}=ix. 
$$
The Hilbert case corresponds to $p=2$ and
\begin{equation}\label{oscillator}
\Delta=-\left(x\frac{d}{dx}\right)^{2}-(ix)^{2}=-\left(x\frac{d}{dx}\right)^{2}+x^{2}=-\left(x^{2}\frac{d^{2}}{dx^{2}}+x\frac{d}{dx}-x^{2}\right)
\end{equation}
\begin{rem}Note, that the classical harmonic oscillator has the form $-\left(\frac{d}{dx}\right)^{2}+x^{2}$
Since in the Mellin analysis the operator $x\frac{d}{dx}$ plays the same role as the operator $\frac{d}{dx}$ in the regular setting, our operator $\Delta$ in (\ref{oscillator}) can be treated as a Mellin harmonic oscillator.
\end{rem}

\section{Proofs of Theorems \ref{Main-Ineq-000} and \ref{Main-000} }\label{proofs}

We are using the same notations as above.

\begin{lem} The following formula holds for $\> m\in \mathbb{N}\cup \{0\}$:

\begin{equation}\label{A1A2}
\mathbb{A}_{2}^{m}T_{1}(t_{1})T_{2}(t_{2})f=e^{-mt_{1}}T_{1}(t_{1})T_{2}(t_{2})\mathbb{A}_{2}^{m}f,\>\>\>f\in \mathcal{G}.
\end{equation}

\end{lem}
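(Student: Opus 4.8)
The plan is to reduce (\ref{A1A2}) to the case $m=1$, namely the intertwining relation
\begin{equation}\label{plan-intertwine}
\mathbb{A}_{2}\,T_{1}(t_{1})f = e^{-t_{1}}\,T_{1}(t_{1})\,\mathbb{A}_{2}f,\qquad f\in\mathcal{G},\ t_{1}\in\mathbb{R},
\end{equation}
and then to iterate it and combine with the fact that $\mathbb{A}_{2}$ generates $T_{2}$. I would derive (\ref{plan-intertwine}) from the group law: since $\exp(t_{1}X_{1})=(e^{t_{1}},0)$ and $\exp(t_{2}X_{2})=(1,t_{2})$, a direct computation with the multiplication rule $(a_{1},b_{1})(a_{2},b_{2})=(a_{1}a_{2},a_{1}b_{2}+b_{1})$ gives the conjugation formula $(e^{t_{1}},0)(1,t_{2})(e^{-t_{1}},0)=(1,e^{t_{1}}t_{2})$, hence, since $T$ is a homomorphism,
\begin{equation}\label{plan-conj}
T_{1}(t_{1})\,T_{2}(t_{2})\,T_{1}(-t_{1}) = T_{2}\!\left(e^{t_{1}}t_{2}\right)\qquad\text{on }{\bf E}.
\end{equation}
Applying both sides of (\ref{plan-conj}) to $f\in\mathcal{G}$ and differentiating in $t_{2}$ at $t_{2}=0$ — legitimate because $\mathcal{G}$ is invariant under all $T(g)$ and lies in the domains of $\mathbb{A}_{1},\mathbb{A}_{2}$ — the chain rule yields $T_{1}(t_{1})\mathbb{A}_{2}T_{1}(-t_{1})f=e^{t_{1}}\mathbb{A}_{2}f$; applying $T_{1}(-t_{1})$ on the left and replacing $t_{1}$ by $-t_{1}$ gives (\ref{plan-intertwine}).

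An alternative derivation of (\ref{plan-intertwine}), staying entirely inside the representation framework, uses only $[\mathbb{A}_{1},\mathbb{A}_{2}]=\mathbb{A}_{2}$: for $f\in\mathcal{G}$ put $\phi(t):=T_{1}(-t)\mathbb{A}_{2}T_{1}(t)f$, which is differentiable since $\mathcal{G}$ is invariant under $T_{1}$ and under polynomials in $\mathbb{A}_{1},\mathbb{A}_{2}$; then
\begin{equation}\label{plan-ode}
\phi'(t) = T_{1}(-t)\bigl(\mathbb{A}_{2}\mathbb{A}_{1}-\mathbb{A}_{1}\mathbb{A}_{2}\bigr)T_{1}(t)f = -T_{1}(-t)\mathbb{A}_{2}T_{1}(t)f = -\phi(t),
\end{equation}
so $\phi(t)=e^{-t}\phi(0)=e^{-t}\mathbb{A}_{2}f$, which is (\ref{plan-intertwine}). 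From here I would iterate (\ref{plan-intertwine}): applying it $m$ times (all intermediate vectors remaining in $\mathcal{G}$) gives $\mathbb{A}_{2}^{m}T_{1}(t_{1})f=e^{-mt_{1}}T_{1}(t_{1})\mathbb{A}_{2}^{m}f$ for $f\in\mathcal{G}$.

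Finally, $\mathbb{A}_{2}$ is the infinitesimal generator of the one-parameter group $T_{2}$, so $\mathbb{A}_{2}$, and hence $\mathbb{A}_{2}^{m}$, commutes with $T_{2}(t_{2})$ on $\mathcal{G}$. Combining the two facts,
\begin{equation}\label{plan-combine}
\mathbb{A}_{2}^{m}T_{1}(t_{1})T_{2}(t_{2})f = e^{-mt_{1}}T_{1}(t_{1})\mathbb{A}_{2}^{m}T_{2}(t_{2})f = e^{-mt_{1}}T_{1}(t_{1})T_{2}(t_{2})\mathbb{A}_{2}^{m}f,
\end{equation}
which is (\ref{A1A2}); the case $m=0$ is trivial. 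The only genuine subtlety, and the step I would take care over, is the domain bookkeeping: one must stay on the Garding space $\mathcal{G}$ throughout so that the differentiation in (\ref{plan-conj}), the commutator manipulation in (\ref{plan-ode}), and the iteration are all justified. The invariance of $\mathcal{G}$ under $T(g)$ and under products of the $\mathbb{A}_{j}$, recorded in Section~\ref{Group and Rep}, is exactly what makes these manipulations legitimate and the rest of the argument routine.
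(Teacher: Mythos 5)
Your proposal is correct and its main route is essentially the paper's argument: the paper likewise uses the group law to move $T_{2}(\tau)$ past $T_{1}(t_{1})T_{2}(t_{2})$ (writing $(1,\tau)(e^{t_{1}},0)(1,t_{2})=(e^{t_{1}},0)(1,t_{2}+\tau e^{-t_{1}})$ and differentiating at $\tau=0$), which is the same computation as your conjugation formula $T_{1}(t_{1})T_{2}(t_{2})T_{1}(-t_{1})=T_{2}(e^{t_{1}}t_{2})$ combined with the commutation of $\mathbb{A}_{2}$ with $T_{2}$. Your alternative ODE derivation from $[\mathbb{A}_{1},\mathbb{A}_{2}]=\mathbb{A}_{2}$ is a valid purely infinitesimal variant not used in the paper, but the substance and the domain bookkeeping on $\mathcal{G}$ match the paper's proof.
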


\begin{proof}
Since 
$$
(e^{t_{1}}, 0)(1, t_{2})=(e^{t_{1}}, t_{2}e^{t_{1}}),
$$
and for any $(a,b)\in G=\mathbb{R}_{+}\times \mathbb{R}$
$$
(a, b)=(e^{\ln a}, 0)(1, b/a)
$$
one has
$$
\mathbb{A}_{2}T_{1}(t_{1})T_{2}(t_{2})f=\frac{d}{d\tau}T_{2}(\tau)T_{1}(t_{1})T_{2}(t_{2})f|_{\tau=0}=
$$
$$
\frac{d}{d\tau}T\left((1,\tau)      (e^{t_{1}}, 0)(1, t_{2})                  \right)f|_{\tau=0}= 
 \frac{d}{d\tau}T\left((e^{t_{1}}, \tau+t_{2}e^{t_{1}}) \right)f|_{\tau=0} =
 $$
 $$
  \frac{d}{d\tau}T\left(   (e^{t_{1}}, 0)(1, t_{2}+\tau e^{-t_{1}})       \right)f|_{\tau=0}=
\frac{d}{d\tau}T_{1}(t_{1})T_{2}(t_{2}+\tau e^{-t_{1}})f|_{\tau=0}=
$$
$$
e^{-t_{1}}\partial_{t_{2}}T_{1}(t_{1})T_{2}(t_{2})f.
$$
However,
$$
\partial_{t_{2}}T_{1}(t_{1})T_{2}(t_{2})f=\lim_{\tau\rightarrow 0}T_{1}(t_{1})
\frac{T_{2}(t_{2}+\tau)-I}{\tau}f=
$$
$$
T_{1}(t_{1})\lim_{\tau\rightarrow 0}\frac{T_{2}(t_{2}+\tau)-I}{\tau}f=T_{1}(t_{1})T_{2}(t_{2})\mathbb{A}_{2}f,
$$
and thus 
$$
\partial_{t_{2}}T_{1}(t_{1})T_{2}(t_{2})f=T_{1}(t_{1})T_{2}(t_{2})\mathbb{A}_{2}f,
$$
and then
$$
\mathbb{A}_{2}^{m}T_{1}(t_{1})T_{2}(t_{2})f=e^{-mt_{1}}T_{1}(t_{1})T_{2}(t_{2})\mathbb{A}_{2}^{m}f,\>\>\>f\in \mathcal{G}.
$$
Lemma is proven. 
\end{proof}
Since
$$
\mathbb{A}_{1}^{n}T_{1}(t_{1})T_{2}(t_{2})f=\partial_{t_{1}}^{n}T_{1}(t_{1})T_{2}(t_{2})f,
$$
we obtain the following statement. 
\begin{col}
The following formula holds for $\> n, m\in \mathbb{N}\cup \{0\}$:
\begin{equation}\label{A1A2}
\mathbb{A}_{1}^{n}\mathbb{A}_{2}^{m}T_{1}(t_{1})T_{2}(t_{2})f=e^{-mt_{1}}T_{1}(t_{1})\mathbb{A}_{1}^{n}T_{2}(t_{2})\mathbb{A}_{2}^{m}f,\>\>\>f\in \mathcal{G}.
\end{equation}
\end{col}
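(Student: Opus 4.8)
The plan is to obtain the Corollary from the previous Lemma simply by applying the operator $\mathbb{A}_{1}^{n}$ to both sides of the already established identity
$$
\mathbb{A}_{2}^{m}T_{1}(t_{1})T_{2}(t_{2})f=e^{-mt_{1}}T_{1}(t_{1})T_{2}(t_{2})\mathbb{A}_{2}^{m}f .
$$
First I would record that every vector occurring below stays in the Garding space $\mathcal{G}$: since $f\in\mathcal{G}$ and $\mathcal{G}$ is invariant under every operator $T(g)$ and under every polynomial in $\mathbb{A}_{1},\mathbb{A}_{2}$ (as recalled in Section~\ref{Rep}), the vector $T_{1}(t_{1})T_{2}(t_{2})\mathbb{A}_{2}^{m}f$ belongs to $\mathcal{G}$, so $\mathbb{A}_{1}^{n}$ may be applied to it and the parameter-differentiation identities used in the next step hold in the strong sense.

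The computation itself is short. On the right-hand side $e^{-mt_{1}}$ is a scalar — here $t_{1}$ is a fixed parameter, not the variable with respect to which the generator $\mathbb{A}_{1}$ differentiates — so it factors out, giving $\mathbb{A}_{1}^{n}\bigl(e^{-mt_{1}}T_{1}(t_{1})T_{2}(t_{2})\mathbb{A}_{2}^{m}f\bigr)=e^{-mt_{1}}\mathbb{A}_{1}^{n}T_{1}(t_{1})T_{2}(t_{2})\mathbb{A}_{2}^{m}f$. Since $\mathbb{A}_{1}$ is the infinitesimal generator of the one-parameter group $T_{1}$, one has $\mathbb{A}_{1}T_{1}(t_{1})h=\partial_{t_{1}}T_{1}(t_{1})h=T_{1}(t_{1})\mathbb{A}_{1}h$ for $h\in\mathcal{G}$; iterating $n$ times with $h=T_{2}(t_{2})\mathbb{A}_{2}^{m}f$ yields $\mathbb{A}_{1}^{n}T_{1}(t_{1})T_{2}(t_{2})\mathbb{A}_{2}^{m}f=T_{1}(t_{1})\mathbb{A}_{1}^{n}T_{2}(t_{2})\mathbb{A}_{2}^{m}f$, which is exactly the identity $\mathbb{A}_{1}^{n}T_{1}(t_{1})T_{2}(t_{2})g=\partial_{t_{1}}^{n}T_{1}(t_{1})T_{2}(t_{2})g$ quoted just before the statement. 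Note that no commutation of $\mathbb{A}_{1}$ past $T_{2}$ is needed, so $\mathbb{A}_{1}^{n}$ legitimately remains between $T_{1}(t_{1})$ and $T_{2}(t_{2})$. Combining these two observations with the Lemma gives (\ref{A1A2}).

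I do not expect any genuine obstacle: the only point requiring care is the domain bookkeeping — never applying an unbounded operator off its domain, and justifying strong differentiability of $t\mapsto T_{1}(t)h$ with values in $\mathcal{G}$ — and this is entirely covered by the invariance of the Garding space recalled above. Accordingly I would keep the written proof to the two or three displayed lines indicated here.
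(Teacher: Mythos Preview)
Your proposal is correct and follows essentially the same approach as the paper: the paper's entire justification is the single line ``Since $\mathbb{A}_{1}^{n}T_{1}(t_{1})T_{2}(t_{2})f=\partial_{t_{1}}^{n}T_{1}(t_{1})T_{2}(t_{2})f$, we obtain the following statement,'' which is exactly your observation that $\mathbb{A}_{1}$ commutes past $T_{1}(t_{1})$. Your version is in fact more careful than the paper's, since you explicitly note that $e^{-mt_{1}}$ is a fixed scalar (so $\mathbb{A}_{1}^{n}$ passes through it) and you address the Garding-space domain bookkeeping.
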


\subsection{The Hardy-Steklov-type operators and the Interpolation spaces $\left(\mathbf{ E}, \mathbf{E}^{r}\right)^{K}_{\alpha/r, q}$}\label{Hardy}

For $j=1, 2$ we introduce the Hardy-Steklov-type operators
$$
\mathcal{P}_{j, r}(s)f=(s/r)^{-r}\underbrace{\int_{0}^{s/r}...\int_{0}^{s/r}}_{r}T_{j}( t_{j,1}+...+t_{j,r})fdt_{j,1}...dt_{j,r}, \>\>\>f\in {\bf E},
$$
and the operator $\mathcal{P}_{r}(s)$  which is defined on ${\bf E}$ by the formula 
$$
\mathcal{P}_{r}(s)f=\mathcal{P}_{1,r}(s)\mathcal{P}_{2,r}(s)f=
(s/r)^{-2r}\underbrace{\int_{0}^{s/r}...\int_{0}^{s/r}}_{2r}T_{1}( t_{1,1}+...+t_{1,r})T_{2}( t_{2,1}+...+t_{2,r})f,
$$
where $f\in {\bf E}$ and we dropped the differentials $dt_{1,1}... dt_{2,r}$.
 We are going to prove the next   Lemma.
\begin{lem}\label{smoothing}
 The operator $\mathcal{P}_{r}(s),\>\>r\in \mathbb{N}, \>s\in \mathbb{R},$ is mapping ${\bf E}$  into $ \mathbf{E}^{r}$
 Moreover,  every vector
 $
 \>\mathbb{A}_{j_{1}}...\mathbb{A}_{j_{r}}\mathcal{P}_{r}(s)f,\>\>\>1\leq j_{1}, ... ,j_{r}\leq 2, \>f\in \mathbf{E}, 
 $
 is a linear combination of the vectors
 $$
\mathbb{A}_{1}^{n}\mathbb{A}_{2}^{m}\mathcal{P}_{r}(s) f,\>f\in \mathbf{E}, \>\>\>1\leq n+m\leq r,
$$ 
which are  liner combinations of a terms of the following form
\begin{equation}\label{H}
(s/r)^{-2r}\underbrace{
\int_{0}^{s/r}...\int_{0}^{s/r}}_{2r-n-m}      \xi(s)    T_{1}( t_{1,1}+...+t_{1, r-k})       \left(T_{1}(\tau)-I\right)^{k}F,
\end{equation}
where 
\begin{equation}\label{F}
F=T_{2}( t_{2,1}+...+t_{2,r-m})\left(T_{2}(s/r)-I\right)^{m}f,
\end{equation}
and $\xi(s)$ is a scalar-valued function such that $\>\xi(s)=O(s^{n-k}),\>0\leq k\leq n$.

\end{lem}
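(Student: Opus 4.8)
The plan is to compute $\mathbb{A}_1^n\mathbb{A}_2^m\mathcal{P}_r(s)f$ directly, using the commutation identity (\ref{A1A2}) from the preceding Corollary as the main tool. First I would note the two elementary facts about the averaging operators: applying $\mathbb{A}_1$ to $\mathcal{P}_{1,r}(s)$ produces telescoping, since $\mathbb{A}_1\int_0^{s/r}T_1(\tau)g\,d\tau = T_1(s/r)g - g = (T_1(s/r)-I)g$, so $r$ of the $2r$ integrations in $\mathcal{P}_r(s)$ can be "spent" to generate the factors $(T_1(s/r)-I)^{?}$ and $(T_2(s/r)-I)^m$; and applying $\mathbb{A}_2$ to $\mathcal{P}_{2,r}(s)$ does the same on the $T_2$-block. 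The point is that $\mathcal{P}_r(s)$ maps $\mathbf{E}$ into $\mathcal{G}$-like smoothness because each averaging in a $C_0$-group raises differentiability by one, and with $2r$ averages we can absorb any word $\mathbb{A}_{j_1}\cdots\mathbb{A}_{j_r}$ of length $r$.

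The key step is to push all occurrences of $\mathbb{A}_2$ to the right past the $T_1$-factors. Because $T_1$ and $T_2$ do not commute, a bare $\mathbb{A}_2^m T_1(t_1)T_2(t_2)$ is not $T_1 T_2 \mathbb{A}_2^m$; instead (\ref{A1A2}) gives the scalar correction $e^{-mt_1}$. So I would first reduce the arbitrary word $\mathbb{A}_{j_1}\cdots\mathbb{A}_{j_r}$ to the normal-ordered form $\mathbb{A}_1^n\mathbb{A}_2^m$ (with $n+m\le r$) modulo lower-order words — this is exactly the content of the equivalence of the two Sobolev norms (\ref{Sob-1})/(\ref{Sob-2}) together with the commutator relation $[\mathbb{A}_1,\mathbb{A}_2]=\mathbb{A}_2$, which lets one rewrite any product of $r$ of the $\mathbb{A}_j$'s as a linear combination of normal-ordered products of length $\le r$. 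Then for $\mathbb{A}_1^n\mathbb{A}_2^m\mathcal{P}_r(s)f = \mathbb{A}_1^n\mathbb{A}_2^m\,\mathcal{P}_{1,r}(s)\mathcal{P}_{2,r}(s)f$, I would commute $\mathbb{A}_2^m$ through the $T_1$-integrations using the Corollary, which converts $\mathbb{A}_2^m$ acting on $T_1(\cdot)T_2(\cdot)$ into $e^{-m(t_{1,1}+\cdots+t_{1,r})}T_1(\cdot)T_2(\cdot)\mathbb{A}_2^m$; the exponential is bounded on the cube $[0,s/r]^r$, and after telescoping the last $m$ of the $T_2$-integrations it yields precisely the factor $(T_2(s/r)-I)^m$ applied to $f$, i.e. the vector $F$ in (\ref{F}), up to a remaining $T_2(t_{2,1}+\cdots+t_{2,r-m})$ free average. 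Finally $\mathbb{A}_1^n$ acting on $\mathcal{P}_{1,r}(s)$ telescopes $n$ of its integrations; since $\mathbb{A}_1 = \partial_{t_1}$ along the $T_1$-flow one can distribute the $n$ derivatives by the Leibniz rule among the $r$ nested $t_{1,i}$-integrals, producing for each allocation a term with $k$ genuine differences $(T_1(\tau)-I)^k$ (those integrations whose upper limit was hit by $\partial$) and the rest free $T_1$-averages $T_1(t_{1,1}+\cdots+t_{1,r-k})$, with the left-over $n-k$ derivatives landing as a scalar prefactor $\xi(s)$. Tracking powers of $s$: each surviving $\partial_{t_{1,i}}$ that does not create a difference contributes a factor of order $s/r$ absorbed from the measure normalization, and combined with the $e^{-m(\cdot)}$ expansion one gets $\xi(s)=O(s^{n-k})$, $0\le k\le n$. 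This is exactly the claimed form (\ref{H}).

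I expect the main obstacle to be bookkeeping rather than any conceptual difficulty: keeping the indices straight when simultaneously (i) reducing an arbitrary length-$r$ word to normal order using $[\mathbb{A}_1,\mathbb{A}_2]=\mathbb{A}_2$, (ii) distributing $\mathbb{A}_1^n = \partial_{t_1}^n$ over $r$ nested integrals via Leibniz so as to name which $k$ of them become differences, and (iii) verifying that the scalar factors coming from the Leibniz rule and from the $e^{-m(t_{1,1}+\cdots+t_{1,r})}$-correction together have the stated size $O(s^{n-k})$. One should also be a little careful that everything is justified on the Garding space $\mathcal{G}$ first (where $\mathbb{A}_1^n\mathbb{A}_2^m T_1 T_2 f$ really is $\partial_{t_1}^n$ of a smooth vector-valued function and (\ref{A1A2}) applies), and then extended to all $f\in\mathbf{E}$ by density together with the boundedness of $\mathcal{P}_r(s)$ and of the operators appearing in (\ref{H}); the density of $\mathcal{G}$ in $\mathbf{E}$ and the $T(g)$-continuity give closability of the relevant compositions. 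Once the normal-ordering and Leibniz combinatorics are written out, the estimate $\xi(s)=O(s^{n-k})$ and the membership $\mathcal{P}_r(s)f\in\mathbf{E}^r$ follow by taking norms in (\ref{H}), using $\|T_j(t)\|\le 1$ and the finiteness of the cube of integration.
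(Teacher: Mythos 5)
Your proposal follows essentially the same route as the paper's proof: normal-ordering the word $\mathbb{A}_{j_1}\cdots\mathbb{A}_{j_r}$ into $\mathbb{A}_1^n\mathbb{A}_2^m$ via $[\mathbb{A}_1,\mathbb{A}_2]=\mathbb{A}_2$, pushing $\mathbb{A}_2^m$ through the $T_1$-block with the identity (\ref{A1A2}) to pick up the scalar $e^{-m(t_{1,1}+\cdots+t_{1,r})}$ and telescope $m$ of the $T_2$-integrations into $(T_2(s/r)-I)^m f$, and then handling $\mathbb{A}_1^n$ by integrating against the exponential weight (the paper phrases your Leibniz bookkeeping as $n$ successive integrations by parts, each producing either a genuine difference $(T_1(\cdot)-I)$ or an $O(s)$ scalar factor, which is where $\xi(s)=O(s^{n-k})$ comes from), finishing with closedness of $\mathbb{A}_1,\mathbb{A}_2$ and density of the Garding space. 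The argument is correct and matches the paper's.
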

\begin{proof}

We note that due to the formula $A_{2}\mathbb{A}_{1}=\mathbb{A}_{1}\mathbb{A}_{2}-\mathbb{A}_{2},$ any product $\mathbb{A}_{j_{1}}...\mathbb{A}_{j_{r}},\>\>1\leq j_{1}, ..., j_{r}\leq 2, m\geq 1, $  can be written as a linear combinations of a products of the form $\mathbb{A}_{1}^{n}\mathbb{A}_{2}^{m},\>\>1\leq n+m\leq m$. According to (\ref{A1A2})
$$
\mathbb{A}_{1}^{n}\mathbb{A}_{2}^{m}T_{1}( t_{1,1}+...+t_{1,r})T_{2}( t_{2,1}+...+t_{2,r})f=
$$
$$
e^{-m(t_{1,1}+...+t_{1,r}   )}T_{1}( t_{1,1}+...+t_{1,r})\mathbb{A}_{1}^{n}T_{2}( t_{2,1}+...+t_{2,r})\mathbb{A}_{2}^{m}f.
$$
By using the formula 
\begin{equation}\label{T2}
\int_{0}^{s/r}T_{2}(\tau)\mathbb{A}_{2}f d\tau=\int_{0}^{s/r}\frac{d}{d\tau}T_{2}(\tau)f d\tau=\left(T_{2}(s/r)-I\right)f,
\end{equation}
$q$ times we obtain
\begin{equation}
(s/r)^{-2r}\underbrace{
\int_{0}^{s/r}...\int_{0}^{s/r}}_{2r}\mathbb{A}_{1}^{n}\mathbb{A}_{2}^{m}T_{1}( t_{1,1}+...+t_{1,r})T_{2}( t_{2,1}+...+t_{2,r})f=
$$
$$
(s/r)^{-2r}\underbrace{
\int_{0}^{s/r}...\int_{0}^{s/r}}_{2r-m}e^{-m(t_{1,1}+...+t_{1,r}   )}
\mathbb{A}_{1}^{n}T_{1}( t_{1,1}+...+t_{1,r})F,
\end{equation}
where $F$ is given in (\ref{F}).
The integration by parts formula gives
\begin{equation}
\int_{0}^{s/r}e^{-m\tau}T_{1}(\tau)\mathbb{A}_{1}Fd\tau=\int_{0}^{s/r}e^{-m\tau}\frac{d}{d\tau}T_{1}(\tau)Fd\tau=
$$
$$
\left(e^{-m s/r}T_{1}(s/r)-I\right)F+m\int_{0}^{s/r}e^{-m\tau}T_{1}(\tau)Fd\tau=
$$
$$
\left(e^{-m s/r}-1\right)F+e^{-m s/r}\left(T_{1}(\tau)-I\right)F+m\int_{0}^{s/r}e^{-m\tau}T_{1}(\tau)Fd\tau.
\end{equation}
By using this formula $p$ times along with the obvious observation that $   \left(e^{-m s/r}-1\right)$  is of order $s$,          we conclude that 
\begin{equation}
(s/r)^{-2r}\underbrace{
\int_{0}^{s/r}...\int_{0}^{s/r}}_{2r}\mathbb{A}_{1}^{n}\mathbb{A}_{2}^{m}T_{1}( t_{1,1}+...+t_{1, r})T_{2}( t_{2,1}+...+t_{2, r})f
\end{equation}
is a liner combination of a terms of the following form
\begin{equation}\label{last-term}
(s/r)^{-2r}\underbrace{
\int_{0}^{s/r}...\int_{0}^{s/r}}_{2r-n-m}      \xi(s)    T_{1}( t_{1,1}+...+t_{1, r-k})       \left(T_{1}(\tau)-I\right)^{k}F,
\end{equation}
where $\xi(s)=O(s^{n-k}),\>1\leq n+m\leq r,\>0\leq k\leq n$.
Since the operators $\mathbb{A}_{1}$ and $\mathbb{A}_{2}$ are closed, our  Lemma is proven.

\end{proof}

For the pair of Banach spaces $\left({\bf E}, \mathbf{E}^{r}\right)$  the $K$-functional  is defined by the formula 
$$
K(s^{r}, f,  {\bf E}, \mathbf{E}^{r})=
$$
$$
\inf_{f=f_{0}+f_{1},\>f_{0}\in {\bf E},
f_{1}\in \mathbf{E}^{r}}
\left(\|f_{0}\|_{{\bf E}}+s^{r}\|f_{1}\|_{\mathbf{E}^{r}}\right).\label{K}
$$
We set
$$
\mathcal{M}_{j,r}(t_{j,1},...,t_{j,r})f=\sum
_{k=1}^{r}(-1)^{k}
C^{k}_{r}T_{j}( k(t_{j,1}+...+t_{j,r})f,
$$
where $C^{k}_{r}$ are the binomial coefficients and introduce
$$ 
\mathcal{H}_{j.r}(s)f=(s/r)^{-r}\int_{0}^{s/r}...\int_{0}^{s/r}\mathcal{M}_{j,r}(t_{j,1},...,t_{j,r})fdt_{j,1}...dt_{j,r}.
$$
An analog of the Hardy-Steklov operator is  defined as follows 
$$
\mathcal{H}_{r}(s)f=
\mathcal{H}_{1,r}(s)\mathcal{H}_{2,r}(s)f,\>\>\>\>f\in \mathbf{E}.
 $$

{\bf Proof of Theorem \ref{Main-Ineq-000}} 
\begin{proof}
We have to show that for every $r\in \mathbb{N}$ there exist a constant $C_{r}$   such that the following inequality holds for all $f\in {\bf E}$
\begin{equation}\label{K-Omega}
c(r)\Omega^{r}(s,f)\leq K(s^{r}, f,  {\bf E}, \mathbf{E}^{r})\leq C(r)\left\{\Omega_{r}(s,f)+\min(s^{r},1)\|f\|_{{\bf E}}\right\}.
\end{equation}
According to Lemma \ref{smoothing} for  $0<s<1$ the following inequality holds
$$
 K\left(s^{r}, f,  {\bf E}, \mathbf{E}^{r}\right) \leq \|f-\mathcal{H}_{r}(s)f\|_{{\bf E}}+s^{r}\|\mathcal{H}_{r}(s)f\|_{\mathbf{E}^{r}}.
$$
We obtain
$$
\|f-\mathcal{H}_{r}(s)f\|_{{\bf E}}\leq 
$$
$$
 (s/r)^{-2r} \left\| \underbrace{    \int_{0}^{s/r} ... \int_{0}^{s/r}   }_{2r}\left[       I-\left(\mathcal{M}_{1,r}(t_{1,1},...,t_{1,r}) \right)\left(\mathcal{M}_{2,r}(t_{2,1},...,t_{2,r})   \right)\right]f
   \right\|_{{\bf E}}.
   $$
   An application of the identity
  \begin{equation}
   1-a_{1}a_{2}=(1-a_{1})+a_{1}(1-a_{2}),
  \end{equation}
   gives
   $$
   \left[       I-\left(\mathcal{M}_{1,r}(t_{1,1},...,t_{1,r}) \right)\left(\mathcal{M}_{2,r}(t_{2,1},...,t_{2,r})   \right)\right]f=
   $$
 $$
      \left[       I-\left(\mathcal{M}_{1,r}(t_{1,1},...,t_{1,r}) \right) \right]f+   \mathcal{M}_{1,r}(t_{1,1},...,t_{1,r}) \left[       I-\left(\mathcal{M}_{1,r}(t_{1,1},...,t_{1,r}) \right)   \right]f,    
   $$
   and then we obtain
\begin{equation}\label{part one}
  \|f-\mathcal{H}_{r}(s)f\|_{{\bf E}}\leq 
  $$
  $$
  c_{0}(r) \left\{ \sup_{0\leq \tau\leq s}\left\|(T_{1}(\tau)-I)^{r}f\right\|_{\bf E}+\sup_{0\leq \tau\leq s}\left\|(T_{2}(\tau)-I)^{r}f\right\|_{\bf E}\right\}\leq 
  c_{0}(r)\Omega^{r}(s, f).
\end{equation}
Next, by Lemma \ref{smoothing} every  term 
$
\>\>s^{r}\mathbb{A}_{j_{1}} ... \mathbb{A}_{j_{r}}\mathcal{H}_{r}(s)f
$
is a linear combination of some terms of the following form
\begin{equation}\label{gen-term}
s^{r}
(s/r)^{-2r}\underbrace{
\int_{0}^{s/r}...\int_{0}^{s/r}}_{2r-n-m}      \xi(s)    T_{1}( t_{1,1}+...+t_{1, r-k})       \left(T_{1}(\tau)-I\right)^{k}F,
\end{equation}
where $\xi(s)=O(s^{n-k}),\>0\leq s\leq 1,$ and $F$ is given by (\ref{F}).
 The norm of a such kind  term  is not greater than
\begin{equation}\label{term-1}
c_{1}(r)s^{r-(m+k)}\sup_{0\leq \tau\leq s}\|(T_{1}(s/r)-I)^{k}T_{2}( \tau)\left(T_{2}(s/r)-I\right)^{m}f\|_{\bf E}.
\end{equation}
Multiple applications of the identity (\ref{first identity-1}) 
allow to estimate the term (\ref{term-1}) by 
$$
c_{2}(r)s^{r-(k_{1}+k_{2})}\sup_{0\leq \tau_{1}, \tau_{2}\leq s}\|(T_{1}(\tau_{1})-I)^{k_{1}}\left(T_{2}(\tau_{2})-I\right)^{k_{2}}f\|_{\bf E}\leq 
$$
$$
c_{2}(r)s^{r-(k_{1}+k_{2})}\Omega^{k_{1}+k_{2}}(s, f).
$$
However, by the inequality (\ref{ineq-3}) the last expression is controlled by 
$
\>\>c_{3}(r)\left\{ s^{r}\|f\|_{\bf E}+\Omega^{r}(s,f)\right\}.
$
Taking in account the inequality (\ref{part one}) we conclude that
$$
 K\left(s^{r}, f,  {\bf E}, \mathbf{E}^{r}\right) \leq \|f-\mathcal{H}_{r}(s)f\|_{{\bf E}}+s^{r}\|\mathcal{H}_{r}(s)f\|_{\mathbf{E}^{r}(\mathbb{A}_{1}, \mathbb{A}_{2})}\leq 
 C(r)\left\{s^{r}\|f\|_{\bf E}+\Omega^{r}(s, f)\right\}.
$$
Thus the right-hand side of (\ref{K-Omega}) is proven.
 According to the inequality (\ref{ineq-1}) one has  for any $f\in \mathbf{E}, \>\>g\in \mathbf{E}^{r}$  the following estimate 
$$
\Omega^{r}(s, f)\leq\Omega^{r}(s, f-g)+\Omega^{r}(s, g)\leq C(r)\left(  \|f-g\|_{\mathbf{E}}+s^{r}\|g\|_{\mathbf{E}^{r}}\right),
$$
which implies the left-hand side of the inequality (\ref{K-Omega}).  Theorem \ref{Main-Ineq-000} is proven.
\end{proof}

{\bf Proof  of Theorem  \ref{Main-000}}

\begin{proof} We will need the following lemma.
\begin{lemma}\label{ineq-00}
The following inequalities hold
\begin{equation}\label{I}
\|f\|_{\mathbf{E}^{k}}\leq C\|f\|_{\mathbf{E}}^{1-k/r}\|f\|_{\mathbf{E}^{r}}^{k/r},\>\>\>f\in \mathbf{E}^{r}, \>\>\>C=C(k,r),
\end{equation}
\begin{equation}\label{II}
K(s^{r},f, \mathbf{E}, \mathbf{E}^{r})\leq      Cs^{r}\|f\|_{\mathbf{E}^{r}},\>\>\>f\in \mathbf{E}^{r}, \>\>\>C=C(r).
\end{equation}

\end{lemma}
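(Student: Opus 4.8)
Inequality~(\ref{II}) is immediate and I would settle it first: in the infimum defining $K(s^{r},f,\mathbf{E},\mathbf{E}^{r})$ one may use the decomposition $f=0+f$ with $f_{0}=0\in\mathbf{E}$ and $f_{1}=f\in\mathbf{E}^{r}$, which gives $K(s^{r},f,\mathbf{E},\mathbf{E}^{r})\leq\|0\|_{\mathbf{E}}+s^{r}\|f\|_{\mathbf{E}^{r}}=s^{r}\|f\|_{\mathbf{E}^{r}}$, so~(\ref{II}) holds with $C=1$.

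For the convexity inequality~(\ref{I}) the plan is to reduce it to the classical one-parameter Landau--Kolmogorov inequality. Using the equivalent norm~(\ref{Sob-2}), it suffices to estimate $\|\mathbb{A}_{j_{1}}\dots\mathbb{A}_{j_{k}}f\|_{\mathbf{E}}$ for every multi-index $(j_{1},\dots,j_{k})\in\{1,2\}^{k}$; and repeated use of the commutation relation $\mathbb{A}_{2}\mathbb{A}_{1}=\mathbb{A}_{1}\mathbb{A}_{2}-\mathbb{A}_{2}$ rewrites each such product as a linear combination of ordered monomials $\mathbb{A}_{1}^{n}\mathbb{A}_{2}^{m}$ with $n+m\leq k$. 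Since $\|f\|_{\mathbf{E}}\leq\|f\|_{\mathbf{E}^{r}}$, a bound $\|\mathbb{A}_{1}^{n}\mathbb{A}_{2}^{m}f\|_{\mathbf{E}}\leq C\|f\|_{\mathbf{E}}^{1-(n+m)/r}\|f\|_{\mathbf{E}^{r}}^{(n+m)/r}$ for every monomial with $n+m\leq k$ implies~(\ref{I}) after summing; so everything reduces to proving this monomial estimate.

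To obtain it I would invoke the classical fact (see \cite{BB}) that a generator $D$ of a bounded $C_{0}$-group on $\mathbf{E}$ satisfies $\|D^{i}g\|_{\mathbf{E}}\leq C_{i,N}\|g\|_{\mathbf{E}}^{1-i/N}\|D^{N}g\|_{\mathbf{E}}^{i/N}$ whenever $0\leq i\leq N$ and $g\in\mathcal{D}(D^{N})$; a self-contained route is to first get the intermediate-derivative estimate $\|D^{i}g\|\leq C(\|g\|+\|D^{N}g\|)$ from the closed graph theorem, then replace $D$ by $\varepsilon D$ (still a bounded $C_{0}$-group) and optimize over $\varepsilon>0$. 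Since $\mathbb{A}_{1}$ generates the bounded group $T_{1}$, $\mathbb{A}_{2}$ the bounded group $T_{2}$, and the membership $f\in\mathbf{E}^{r}$ guarantees that every monomial of degree $\leq r$ applied to $f$ lies in $\mathbf{E}$ with norm $\leq C\|f\|_{\mathbf{E}^{r}}$, I would apply this inequality twice for $n+m<r$, $m\geq1$: first with $D=\mathbb{A}_{1}$, $g=\mathbb{A}_{2}^{m}f$, $i=n$, $N=r-m$, and then with $D=\mathbb{A}_{2}$, $g=f$, $i=m$, $N=r$, obtaining
\begin{equation}
\|\mathbb{A}_{1}^{n}\mathbb{A}_{2}^{m}f\|_{\mathbf{E}}\leq C\bigl(\|f\|_{\mathbf{E}}^{1-m/r}\|f\|_{\mathbf{E}^{r}}^{m/r}\bigr)^{1-n/(r-m)}\|f\|_{\mathbf{E}^{r}}^{n/(r-m)},
\end{equation}
and a short bookkeeping of exponents turns the right-hand side into $C\|f\|_{\mathbf{E}}^{1-(n+m)/r}\|f\|_{\mathbf{E}^{r}}^{(n+m)/r}$. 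The boundary cases $m=0$ (apply the inequality once with $D=\mathbb{A}_{1}$, $N=r$) and $n+m=r$ (use the trivial bound $\|\mathbb{A}_{1}^{n}\mathbb{A}_{2}^{m}f\|_{\mathbf{E}}\leq\|f\|_{\mathbf{E}^{r}}$, which already matches the exponents $1-r/r$, $r/r$) are handled separately.

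I expect the only genuine difficulty to be the non-commutativity of $\mathbb{A}_{1}$ and $\mathbb{A}_{2}$: it is what forces the passage to ordered monomials through the commutator identity and prevents one from simply quoting a one-operator inequality. Once that reduction is carried out, the remaining point — that iterating the one-parameter Landau--Kolmogorov inequality reproduces precisely the exponents $1-k/r$ and $k/r$ — is an elementary computation.
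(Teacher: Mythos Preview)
Your argument is correct. For~(\ref{I}) you are doing exactly what the paper indicates (``follows from its well-known version for a single generator of a bounded $C_{0}$-semigroup''), only with the details spelled out: the passage to ordered monomials $\mathbb{A}_{1}^{n}\mathbb{A}_{2}^{m}$ via the relation $\mathbb{A}_{2}\mathbb{A}_{1}=\mathbb{A}_{1}\mathbb{A}_{2}-\mathbb{A}_{2}$, and the two-step application of the one-parameter Landau--Kolmogorov inequality with your exponent bookkeeping, are precisely the reduction the paper leaves implicit.

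For~(\ref{II}) you actually take a shorter route than the paper. The paper derives it from the right-hand side of Theorem~\ref{Main-Ineq-000} together with~(\ref{ineq-1}): first $\Omega^{r}(s,f)\leq C s^{r}\|f\|_{\mathbf{E}^{r}}$ from~(\ref{ineq-1}) with $k=r$, then $K(s^{r},f,\mathbf{E},\mathbf{E}^{r})\leq C\bigl(\Omega^{r}(s,f)+\min(s^{r},1)\|f\|_{\mathbf{E}}\bigr)\leq C's^{r}\|f\|_{\mathbf{E}^{r}}$. Your observation that the decomposition $f=0+f$ already gives $K(s^{r},f,\mathbf{E},\mathbf{E}^{r})\leq s^{r}\|f\|_{\mathbf{E}^{r}}$ with $C=1$ is the standard trivial bound on the $K$-functional and is both simpler and independent of the machinery developed earlier; the paper's detour through~(\ref{main-ineq-000}) and~(\ref{ineq-1}) buys nothing extra here.
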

\begin{proof}
The first inequality follows from  its well-known version for a single generator of a bounded $C_{0}$-semigroup (see also \cite{Pes22}). The second one follows from the right-hand estimate of (\ref{main-ineq-000}) and (\ref{ineq-1}).
\end{proof}
This lemma shows that one can use the Reiteration Theorem (see \cite{BB}, \cite{KPS}), which immediately implies item (1) of Theorem \ref{Main-000}. Next, let $\alpha>0,\>$ is a non-integer and $[\alpha]$ be its integer part.  According to the equality (\ref{reit}) of Theorem \ref{Main-000} we have 

$$
\left( \mathbf{E}, \mathbf{E}^{r}\right)^{K}_{\alpha/r, q}=\left( \mathbf{E}^{[\alpha]}, \mathbf{E}^{r}\right)^{K}_{(\alpha-[\alpha])/(r-[\alpha]), q}
$$ 
and 
$$
\left( \mathbf{E}, \mathbf{E}^{1}\right)^{K}_{\alpha-[\alpha], q}=\left( \mathbf{E}, \mathbf{E}^{r-[\alpha]}\right)^{K}_{(\alpha-[\alpha])/(r-[\alpha]), q}.
$$
 Note, that $\mathbb{A}_{j_{1}}\mathbb{A}_{j_{2}}...\mathbb{A}_{j_{[\alpha]}},\>\>1\leq j_{1},...,j_{[\alpha]}\leq 2, $ is a continuous map 
$$
\mathbb{A}_{j_{1}}\mathbb{A}_{j_{2}}...\mathbb{A}_{j_{[\alpha]}}: \left( \mathbf{E}^{[\alpha]}, \mathbf{E}^{r}\right)^{K}_{(\alpha-[\alpha])/(r-[\alpha]), q}\mapsto \left( \mathbf{E}, \mathbf{E}^{r-[\alpha]}\right)^{K}_{(\alpha-[\alpha])/(r-[\alpha]), q}.
$$
 All together it shows that if $f\in \left( \mathbf{E}, \mathbf{E}^{r}\right)^{K}_{\alpha/r, q}$ then $\mathbb{A}_{j_{1}}\mathbb{A}_{j_{2}}...\mathbb{A}_{j_{[\alpha]}}f\in \left( \mathbf{E}, \mathbf{E}^{1}\right)^{K}_{\alpha-[\alpha], q}$ and 
\begin{equation}\label{(4)}
\left\|\mathbb{A}_{j_{1}}\mathbb{A}_{j_{2}}...\mathbb{A}_{j_{[\alpha]}}f\right\|_{\left( \mathbf{E}, \mathbf{E}^{1}\right)^{K}_{\alpha-[\alpha], q}}\leq C\|f\|_{ \left( \mathbf{E}, \mathbf{E}^{r}\right)^{K}_{\alpha/r, q}}.
\end{equation}
Conversely,   let $\mathbb{A}_{j_{1}}\mathbb{A}_{j_{2}}...\mathbb{A}_{j_{[\alpha]}}f\in \left( \mathbf{E}, \mathbf{E}^{1}\right)^{K}_{\alpha-[\alpha], q}=\left( \mathbf{E}^{[\alpha]}, \mathbf{E}^{r}\right)^{K}_{(\alpha-[\alpha])/(r-[\alpha]), q}.$ Then the right-hand estimate of (\ref{main-ineq-000}) and (\ref{ineq-1}) imply
\begin{equation}\label{(4')}
\|f\|_{ \left( \mathbf{E}, \mathbf{E}^{r}\right)^{K}_{\alpha/r, q}}\leq C\sum_{1\leq j_{1},...,j_{[\alpha]}\leq 2}\left\|\mathbb{A}_{j_{1}}\mathbb{A}_{j_{2}}...\mathbb{A}_{j_{[\alpha]}}f\right\|_{\left( \mathbf{E}, \mathbf{E}^{1}\right)^{K}_{\alpha-[\alpha], q}}.
\end{equation}
Inequalities (\ref{(4)}) and (\ref{(4')}) imply   item (2) of Theorem \ref{Main-000}. Proof of item (3) is similar. Theorem \ref{Main-000} is completely proved. 

\end{proof}

\section{Paley-Wiener frames in Hilbert space $\bf H$.}\label{Frames}

\subsection{Partitions of unity on the frequency side}\label{subsect:part_unity_freq}

 We keep the notations from the previous sections and assume that $T$ is a unitary representation of the $ "ax+b" $ group $G$ in a Hilbert space ${\bf H}$. As it was mentioned above, the corresponding Laplace operator 
 $$
 \Delta=-\mathbb{A}_{1}^{2}-\mathbb{A}_{2}^{2}
 $$
is self-adjoint and non-negative.

The construction of frequency-localized frames is  achieved via spectral calculus. The idea is to start from a partition of unity on the positive real axis. In the following, we will be considering two different types of such partitions, whose construction we now describe in some detail. The construction below was described in \cite{FFP}.

 Let $g\in C^{\infty}(\mathbb{R}_{+})$ be a non-increasing
%  monotonic
 function such that $supp(g)\subset [0,\>  2], $ and $g(\lambda)=1$ for $\lambda\in [0,\>1], \>0\leq g(\lambda)\leq 1, \>\lambda>0.$
 We now let
$
 h(\lambda) = g(\lambda) - g(2 \lambda)~,
$ which entails $supp(h) \subset [2^{-1},2]$,
and use this to define
$$
 F_0(\lambda) = \sqrt{g(\lambda)}~, F_j(\lambda) = \sqrt{h(2^{-j} \lambda)}~, j \ge 1~,
$$
as well as $
 Q_j(\lambda) = \left[F_j(\lambda)\right]^2=F_j^2(\lambda)~, j \ge 0~.$
As a result of the definitions, we get for all $\lambda \ge 0$ the equations
$$
\sum_{j \in \mathbb{N}} Q_j(\lambda) = \sum_{j \in \mathbb{N}} F_j^2(\lambda)
=  g(2^{-n} \lambda),
$$
and as a consequence
$$
\sum_{j \in \mathbb{N}} Q_j(\lambda) = \sum_{j \in \mathbb{N}} F_j^2(\lambda) =  1~,\>\>\>\lambda\geq 0,
$$
 with finitely many nonzero terms occurring in the sums for each
 fixed $\lambda$. %uuu
We call the sequence $(Q_j)_{j \ge 0}$ a {\bf (dyadic) partition of unity}, and $(F_j)_{j \ge 0}$ a {\bf quadratic (dyadic) partition of unity}.  As will become soon apparent, quadratic partitions are useful for the construction of frames.
 Using the spectral theorem one has
$$
F_{j}^{2}(\Delta)  f=\mathcal{F}^{-1}\left(F_{j}^{2}(\lambda)\mathcal{F}f(\lambda)\right),\>\>\>j\geq 1,
$$
and thus
\begin{equation} \label{eqn:quad_part_identity}
 f = \mathcal{F}^{-1}\mathcal{F}f(\lambda) =\mathcal{F}^{-1}\left(\sum_{j \in \mathbb{N}}F_{j}^{2}(\lambda)\mathcal{F}f(\lambda)\right) = \sum_{j \in \mathbb{N}} F_{j}^2(\Delta) f
\end{equation}  % {\sc here SLB}
Taking inner product with $f$ gives
$$
\|F_{j}(\Delta) f\|^{2}_{{\bf H}}=\langle F_{j}^{2}(\Delta) f, f \rangle,
$$  % {\sc here SLB}
and
\begin{equation}\label{Decomp}
\|f\|_{{\bf H}}^2=\sum_{j \in \mathbb{N}}\langle F_{j}^2 (\Delta) f,f\rangle=\sum_{j \in \mathbb{N}}\|F_{j}(\Delta)f\|_{{\bf H}}^2 .
\end{equation}
Similarly, we get the identity  $
 \sum_{j \in \mathbb{N}} Q_j (\Delta) f = f~.$
Moreover, since the functions $Q_j,  F_{j}$, have their supports in  $
[2^{j-1},\>\>2^{j+1}]$, the elements $ F_{j}(\Delta) f $ and $Q_j (\Delta) f$
 are bandlimited to  $[2^{j-1},\>\>2^{j+1}]$, whenever $j \ge 1$, and to $[0,2]$ for $j=0$.

 \subsection{Paley-Wiener frames in  ${\bf  H}$}\label{Hilb}
We consider the Laplace operator $\Delta$ defined in (\ref{L}) in the Hilbert spaces ${\bf H}$.

\begin{defn}
For every $j\in \mathbb{N}$ let 
$$
\{\Phi_{k}^{j}\}_{k=1}^{K_{j}},\>\>\>\>\>\>\Phi_{k}^{j}\in {\bf PW}_{[2^{j-1}, \>2^{j+1})}\left(\Delta^{1/2}\right),
$$
$$
K_{j}\in \mathbb{N}\cup \{\infty\},
$$ 
 be a frame in ${\bf PW}_{[2^{j-1}, \>2^{j+1})}\left(\Delta^{1/2}\right)$ with the fixed constants $a,\>b$, i.e.
\begin{equation}
a\|f\|_{{\bf H}}^2\leq \sum_{k=1}^{K_{j}}\left|\left< f, \Phi^{j}_{k}\right>\right|^{2}\leq b\|f\|_{{\bf H}}^{2},\>\>\>f \in {\bf PW}_{[2^{j-1}, \>2^{j+1})}\left(\Delta^{1/2}\right).
\end{equation}

\end{defn}
The formula (\ref{Decomp}) and the general theory of frames imply the following statement.

\begin{thm}

\begin{enumerate}
\item The set of functions $\{\Phi_{k}^{j}\}$ will be a frame in the entire space ${\bf H}$ with the same frame constants $a$ and $b$, i.e.
\begin{equation}
a\|f\|_{\bf H}^{2}\leq \sum_{j}\sum_{k}\left|\left< f, \Phi^{j}_{k}\right>\right|^{2}\leq b\|f\|_{\bf H}^{2},\>\>\>f \in {\bf H}.
\end{equation}

\item  The canonical dual frame $\{\Psi^{j}_{k}\}$
also consists of bandlimited  vectors $\Psi^{j}_{k}\in  {\bf PW}_{[2^{j-1},\>2^{j+1}]}(\Delta^{1/2}) ,\>\>j\in  \mathbb{N}, \>k=1,..., K_{j}$, and has the frame bounds $b^{-1},\>\>a^{-1}.$

\item The reconstruction formulas hold for every $f\in {\bf H}$
$$
f=\sum_{j}\sum_{k}\left<f,\Phi^{j}_{k}\right>\Psi^{j}_{k}=\sum_{j}\sum_{k}\left<f,\Psi^{j}_{k}\right>\Phi^{j}_{k}.
$$

\end{enumerate}
\end{thm}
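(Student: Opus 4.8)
The plan is to derive the statement directly from the quadratic partition-of-unity identity established in the previous subsection, namely equation (\ref{Decomp}), together with the defining frame inequality on each band. First I would fix $f\in{\bf H}$ and observe that, by the spectral theorem, $F_j(\Delta)f\in{\bf PW}_{[2^{j-1},\,2^{j+1})}(\Delta^{1/2})$ for every $j$; this is precisely the band-limiting remark recorded after (\ref{eqn:quad_part_identity}). Applying the hypothesized lower and upper frame bounds $a,b$ to the vector $F_j(\Delta)f$ in the space ${\bf PW}_{[2^{j-1},\,2^{j+1})}(\Delta^{1/2})$ gives
\begin{equation}
a\|F_j(\Delta)f\|_{{\bf H}}^2\leq\sum_{k=1}^{K_j}\bigl|\langle F_j(\Delta)f,\Phi_k^j\rangle\bigr|^2\leq b\|F_j(\Delta)f\|_{{\bf H}}^2 .
\end{equation}
The key algebraic point is that $\langle F_j(\Delta)f,\Phi_k^j\rangle=\langle f,F_j(\Delta)\Phi_k^j\rangle=\langle f,\Phi_k^j\rangle$, since $F_j(\Delta)$ is self-adjoint and acts as the identity on ${\bf PW}_{[2^{j-1},\,2^{j+1})}(\Delta^{1/2})$ because $F_j\equiv 1$ on that band (this uses that $F_j^2=Q_j$ and $\sum Q_j=1$, so each $F_j$ is $1$ on the plateau of its support, or more simply that $F_j(\Delta)$ restricted to the corresponding spectral subspace is the identity). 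Summing over $j$ and invoking (\ref{Decomp}), i.e. $\sum_j\|F_j(\Delta)f\|_{{\bf H}}^2=\|f\|_{{\bf H}}^2$, yields
\begin{equation}
a\|f\|_{{\bf H}}^2\leq\sum_j\sum_k\bigl|\langle f,\Phi_k^j\rangle\bigr|^2\leq b\|f\|_{{\bf H}}^2,
\end{equation}
which is item (1).

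For items (2) and (3) I would simply quote the general Hilbert-space frame theory: once $\{\Phi_k^j\}$ is a frame for ${\bf H}$ with bounds $a,b$, the frame operator $S f=\sum_{j,k}\langle f,\Phi_k^j\rangle\Phi_k^j$ is bounded, positive, invertible with $a\,I\leq S\leq b\,I$, the canonical dual frame is $\Psi_k^j=S^{-1}\Phi_k^j$ with frame bounds $b^{-1},a^{-1}$, and the reconstruction identities $f=\sum_{j,k}\langle f,\Phi_k^j\rangle\Psi_k^j=\sum_{j,k}\langle f,\Psi_k^j\rangle\Phi_k^j$ hold. The only thing requiring a short argument beyond the abstract theory is that each dual vector $\Psi_k^j$ stays in the band ${\bf PW}_{[2^{j-1},\,2^{j+1}]}(\Delta^{1/2})$. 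For this I would note that $S$ commutes with every spectral projection ${\bf 1}_{[2^{j-1},2^{j+1}]}(\Delta^{1/2})$: indeed $S=\sum_j F_j(\Delta)S_j F_j(\Delta)$ where $S_j$ is the band frame operator, or more directly one checks that $S$ is a function of $\Delta$ up to the local frame operators, so $S^{-1}$ preserves each spectral subspace; hence $\Psi_k^j=S^{-1}\Phi_k^j$ lies in the same Paley-Wiener band as $\Phi_k^j$, using Theorem \ref{PWproprties}(3) which identifies ${\bf PW}$-spaces with ranges of spectral projections.

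The main obstacle, and the step deserving the most care, is this band-preservation of $S^{-1}$. The subtlety is that $S=\sum_{j,k}\langle\cdot,\Phi_k^j\rangle\Phi_k^j$ is \emph{not} literally a function of $\Delta$, so one cannot invoke the spectral theorem for $\Delta$ directly; instead one argues that because the supports of consecutive $Q_j$ overlap only with their nearest neighbors, $S$ has a block-tridiagonal structure with respect to the spectral decomposition, and in particular $S$ maps $\mathrm{Ran}\,{\bf 1}_{[2^{j-1},2^{j+1}]}(\Delta^{1/2})$ into $\mathrm{Ran}\,{\bf 1}_{[2^{j-2},2^{j+2}]}(\Delta^{1/2})$. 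A clean way around this is to exploit that each $\Phi_k^j$ is exactly band-limited and $F_\ell(\Delta)\Phi_k^j=\Phi_k^j$ for $\ell=j$ while neighboring $F_\ell$ only scale within overlapping bands; writing $S$ through the quadratic partition and using $\sum F_\ell^2=I$ shows $S$ commutes with $g(2^{-n}\Delta)$ for the cutoffs $g$, hence with the relevant spectral projections, giving the desired invariance of $S^{-1}$. Everything else is the standard frame-operator argument and needs no further elaboration.
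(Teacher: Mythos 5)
Your reduction of item (1) hinges on the identity $\langle F_j(\Delta)f,\Phi_k^j\rangle=\langle f,\Phi_k^j\rangle$, which you justify by claiming that $F_j\equiv 1$ on the band $[2^{j-1},2^{j+1})$, so that $F_j(\Delta)$ acts as the identity on ${\bf PW}_{[2^{j-1},2^{j+1})}(\Delta^{1/2})$. This is false. By construction $F_j(\lambda)^2=h(2^{-j}\lambda)=g(2^{-j}\lambda)-g(2^{-j+1}\lambda)$, which vanishes at $\lambda=2^{j\pm 1}$ and, for the stated choice of $g$, equals $1$ only at $\lambda=2^{j}$; the whole point of a quadratic partition of unity is that the squares $F_j^2$ sum to one because neighbouring bumps overlap, which forces each individual $F_j$ to be strictly less than $1$ on most of its band. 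Consequently $F_j(\Delta)\Phi_k^j\neq\Phi_k^j$ whenever $\Phi_k^j$ has spectral mass away from $2^j$, and your chain of inequalities proves the frame property for the modified system $\{F_j(\Delta)\Phi_k^j\}$, not for $\{\Phi_k^j\}$. The operator that does reproduce $\langle f,\Phi_k^j\rangle$ is the spectral projection $P_j={\bf 1}_{[2^{j-1},2^{j+1})}(\Delta^{1/2})$ (Theorem \ref{PWproprties}, item (3)); applying the band frame inequality to $P_jf$ gives $a\|P_jf\|^2\leq\sum_k|\langle f,\Phi_k^j\rangle|^2\leq b\|P_jf\|^2$, but then the summation step no longer rests on the clean identity (\ref{Decomp}): since the intervals $[2^{j-1},2^{j+1})$ overlap, one only gets $\|f\|^2\leq\sum_j\|P_jf\|^2\leq 2\|f\|^2$ and hence bounds $a$ and $2b$. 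Either way, the argument as written does not close. (The paper disposes of the theorem in one sentence, citing (\ref{Decomp}) and general frame theory, so it offers no further guidance; the difficulty is in the mathematics, not merely in your exposition.)

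The band-preservation argument for item (2) has the same defect in a sharper form. You first assert that the frame operator $S$ commutes with the spectral projections, then concede that $S$ is not a function of $\Delta$ and fall back on a block-tridiagonal structure, namely that $S$ maps band $j$ into $[2^{j-2},2^{j+2}]$. But the inverse of a block-tridiagonal operator is in general full, so this does not show that $\Psi_k^j=S^{-1}\Phi_k^j$ stays in ${\bf PW}_{[2^{j-1},2^{j+1}]}(\Delta^{1/2})$; and the closing appeal to commutation of $S$ with $g(2^{-n}\Delta)$ is unsubstantiated, since $S$ depends on the arbitrary local frames $\{\Phi_k^j\}$ and not only on $\Delta$. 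Band preservation of the canonical dual is automatic only when the bands are spectrally disjoint, in which case ${\bf H}$ decomposes orthogonally, $S=\bigoplus_jS_j$, and $S^{-1}=\bigoplus_jS_j^{-1}$ acts band by band. With the overlapping dyadic bands used here you would need a genuine additional argument about $S^{-1}$ (or a reformulation of the statement); as it stands, the key steps in both item (1) and item (2) of your proposal fail.
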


The formula (\ref{Decomp}) implies that in this case the set of functions $\{\Phi_{k}^{j}\}$ will be a frame in the entire ${\bf H}$ with the same frame constants $a$ and $b$, i.e.
\begin{equation}
a\|f\|_{{\bf H}}^2\leq \sum_{j}\sum_{k}\left|\left< f, \Phi^{j}_{k}\right>\right|^{2}\leq b\|f\|_{{\bf H}}^2,\>\>\>f \in {\bf H}.
\end{equation}

\section{More about Besov spaces $\mathbf{ B}^{\sigma}_{ q}$}\label{MoreBes}

In this section we applying Theorems \ref{Direct} and \ref{Inverse} to a situation where $$
{\bf A}={\bf H},\>{\bf B}=\mathbf{H}^{r}, \>\mathbf{B}_{q}^{\alpha}=({\bf H}, \mathbf{H}^{r})^{K}_{\alpha/r, q},\>\>\>
$$
and $\mathcal{T}=\cup_{\omega>0}{\bf PW}_{\omega}\left(\Delta^{1/2}\right)$  
is the abelian additive group  with the quasi-norm
$$
 \| f \|_{\mathcal{T}} = \inf \left \{ \omega'>0~: f \in {\bf PW}_{\mathbf{\omega}'}\left(\Delta^{1/2}\right) \right\}~.
$$
In Lemma \ref{ineq-00}  we proved that the assumptions of Theorems \ref{Direct} and \ref{Inverse} are satisfied. It allows us to formulate the following result.

\begin{thm} \label{approx}
For $\alpha>0, \>\>\>1\leq q\leq\infty,$ the norm of
 $\mathbf{B}_{q}^{\alpha}$,
  is equivalent to
\begin{equation}
\|f\|_{{\bf H}}+\left(\sum_{j=0}^{\infty}\left(2^{j\alpha }\mathcal{E}_{2}(f,
2^{j}; \Delta\right)^{q}\right)^{1/q}.
\end{equation}
\label{maintheorem1}
\end{thm}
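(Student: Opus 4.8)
The plan is to deduce Theorem \ref{approx} as an application of the abstract equivalence between Besov-type interpolation spaces and approximation spaces, stated as Theorems \ref{Direct} and \ref{Inverse} in the Appendix. The strategy has three ingredients: first, verify that the hypotheses of those abstract theorems hold in the present setting; second, identify the approximation-space norm built from $\mathcal{E}_{2}(f,2^{j};\Delta)$ with a $K$-functional expression; and third, invoke Theorem \ref{Main-Ineq-000} (or rather its Hilbert-space consequence together with Theorem \ref{H-equivalence}) to pass from the $K$-functional to the stated Besov norm.

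\medskip

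\textbf{Step 1 (abstract framework).} I would take $\mathbf{A}=\mathbf{H}$, $\mathbf{B}=\mathbf{H}^{r}$ with $r>\alpha$ a fixed integer, and $\mathcal{T}=\bigcup_{\omega>0}\mathbf{PW}_{\omega}(\Delta^{1/2})$ with the quasi-norm $\|f\|_{\mathcal{T}}=\inf\{\omega'>0: f\in \mathbf{PW}_{\omega'}(\Delta^{1/2})\}$, exactly as set up just before the statement. The two structural hypotheses needed by Theorems \ref{Direct} and \ref{Inverse} are a Jackson-type inequality and a Bernstein-type inequality. The Bernstein inequality is item (4) of Theorem \ref{PWproprties}: for $f\in\mathbf{PW}_{\omega}(\Delta^{1/2})$ one has $\|\Delta^{s/2}f\|_{\mathbf H}\le\omega^{s}\|f\|_{\mathbf H}$, which (via Theorem \ref{H-equivalence}, i.e. equivalence of $\|\cdot\|_{\mathbf{H}^{r}}$ and the graph norm of $\Delta^{r/2}$) gives $\|f\|_{\mathbf{H}^{r}}\le C\,\omega^{r}\|f\|_{\mathbf H}$. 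The Jackson inequality is Theorem \ref{main-Jack} combined with (\ref{K-L-ineq}) and (\ref{C3}): $\mathcal{E}_{\Delta}(\sigma,f)\le C\,K(\sigma^{-r},f,\mathbf H,\mathbf H^{r})$. These are precisely the two estimates recorded in Lemma \ref{ineq-00}, so the abstract theorems apply.

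\medskip

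\textbf{Step 2 (dyadic discretization and the $K$-functional).} Theorems \ref{Direct} and \ref{Inverse} yield that $\mathbf{B}_{q}^{\alpha}=(\mathbf H,\mathbf H^{r})^{K}_{\alpha/r,q}$ coincides with the approximation space whose seminorm is $\big(\sum_{j\ge 0}(2^{j\alpha}\mathcal{E}_{\Delta}(2^{j},f))^{q}\big)^{1/q}$ (with the usual sup modification for $q=\infty$), using that $\mathcal{E}_{2}(f,2^{j};\Delta)$ is the best approximation of $f$ by elements of $\mathbf{PW}_{2^{j}}(\Delta^{1/2})$. The only point requiring care is the passage from the continuous $K$-functional characterization of the interpolation space to the discrete $\ell^{q}$-sum over the dyadic scale $\sigma=2^{j}$; this is standard and follows from monotonicity of $\sigma\mapsto\mathcal{E}_{\Delta}(\sigma,f)$ together with the fact that on each interval $[2^{j},2^{j+1}]$ the relevant quantities are comparable, so the integral $\int_{0}^{\infty}(\sigma^{\alpha}\mathcal{E}_{\Delta}(\sigma,f))^{q}\,d\sigma/\sigma$ is equivalent to the series — exactly the discretization lemma underlying the definition of approximation spaces in the Appendix. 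Adding back $\|f\|_{\mathbf H}$ to account for the $j=0$ term and the ambient norm gives the claimed equivalent norm.

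\medskip

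\textbf{Main obstacle.} The genuinely substantive content is already packaged in the earlier results — the spectral/Paley-Wiener theory of Theorem \ref{PWproprties}, the Jackson estimate of Theorem \ref{main-Jack}, and the Sobolev-domain identification of Theorem \ref{H-equivalence} — so the proof here is essentially a verification that the abstract approximation-theory machine can be switched on. The one place where I expect to have to be slightly careful is checking that $\mathcal{T}=\bigcup_{\omega}\mathbf{PW}_{\omega}(\Delta^{1/2})$ genuinely satisfies the quasi-normed abelian-group axioms demanded by Theorems \ref{Direct} and \ref{Inverse} (closure under addition with the right quasi-triangle constant — here one uses that $\mathbf{PW}_{\omega_{1}}+\mathbf{PW}_{\omega_{2}}\subset\mathbf{PW}_{\max(\omega_{1},\omega_{2})}$ by item (3) of Theorem \ref{PWproprties}, so in fact the quasi-norm is even an ultra-quasi-norm) and that $\mathcal{T}$ is dense in $\mathbf H$ (item (2) of Theorem \ref{PWproprties}). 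Once these routine points are dispatched, Theorems \ref{Direct} and \ref{Inverse} deliver the equivalence of norms and the theorem follows.
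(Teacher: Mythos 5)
Your proposal is correct and follows essentially the same route as the paper: the paper derives Theorem \ref{approx} by invoking the abstract direct and inverse approximation theorems (Theorems \ref{Direct} and \ref{Inverse}) with ${\bf A}={\bf H}$, ${\bf B}=\mathbf{H}^{r}$, $\mathcal{T}=\bigcup_{\omega>0}{\bf PW}_{\omega}(\Delta^{1/2})$, the Bernstein hypothesis coming from Theorem \ref{PWproprties}(4) together with Theorem \ref{H-equivalence} and the Jackson hypothesis from Theorem \ref{main-Jack}. If anything, your verification is more explicit than the paper's (which attributes the hypotheses to Lemma \ref{ineq-00}), and your remarks on the dyadic discretization and the quasi-norm structure of $\mathcal{T}$ fill in routine details the paper leaves implicit.
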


Let the functions $F_{j}$ be as in Subsection \ref{subsect:part_unity_freq}.

\begin{thm}\label{projections}
For $\alpha>0, \>\>\>1\leq q\leq\infty,$ the norm of
 $\mathbf{B}_{q}^{\alpha}$,
  is equivalent to

\begin{equation}
f \mapsto \left(\sum_{j=0}^{\infty}\left(2^{j\alpha
}\left \|F_j(\Delta) f\right \|_{{\bf H}}\right)^{q}\right)^{1/q},
\label{normequiv-1}
\end{equation}
  with the standard modifications for $q=\infty$.
\end{thm}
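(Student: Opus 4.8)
The plan is to deduce the characterization (\ref{normequiv-1}) from Theorem \ref{approx}, i.e. from the already-established equivalence of the Besov norm with the approximation-type expression built from $\mathcal{E}_2(f,2^j;\Delta)$, by comparing the Littlewood--Paley pieces $F_j(\Delta)f$ with the best-approximation errors $\mathcal{E}_\Delta(2^j,f)$. Since the functions $F_j$ have spectral support in $[2^{j-1},2^{j+1}]$, each vector $F_j(\Delta)f$ lies in ${\bf PW}_{[2^{j-1},2^{j+1}]}(\Delta^{1/2})$, so the quadratic partition-of-unity identity (\ref{eqn:quad_part_identity}), $f=\sum_{k\ge 0}F_k^2(\Delta)f$, exhibits $\sum_{k\le j}F_k^2(\Delta)f$ as an element of ${\bf PW}_{2^{j+1}}(\Delta^{1/2})$. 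Hence
$$
\mathcal{E}_\Delta(2^{j+1},f)\le \Big\|f-\sum_{k=0}^{j}F_k^2(\Delta)f\Big\|_{{\bf H}}=\Big\|\sum_{k>j}F_k^2(\Delta)f\Big\|_{{\bf H}}.
$$
Using orthogonality of the spectral projections on dyadically separated bands (only neighbouring bands overlap, so one gets a finite-overlap, hence Besselian, estimate) together with $0\le F_k\le 1$, the right-hand side is controlled by $\big(\sum_{k>j}\|F_k(\Delta)f\|_{{\bf H}}^2\big)^{1/2}$, and in fact by $c\sup_{k>j}\|F_k(\Delta)f\|_{{\bf H}}$ up to the geometric factor coming from the bandwidths; this gives one direction once summed against the weights $2^{j\alpha q}$ via a standard discrete Hardy inequality.

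For the reverse comparison I would estimate $\|F_j(\Delta)f\|_{{\bf H}}$ by $\mathcal{E}_\Delta(2^{j-1},f)$: if $g\in{\bf PW}_{2^{j-1}}(\Delta^{1/2})$ then $F_j(\Delta)g=0$ because $\mathrm{supp}(F_j)\subset[2^{j-1},2^{j+1}]$ meets the spectrum of $g$ only at the endpoint, so
$$
\|F_j(\Delta)f\|_{{\bf H}}=\|F_j(\Delta)(f-g)\|_{{\bf H}}\le\|f-g\|_{{\bf H}},
$$
and taking the infimum over such $g$ yields $\|F_j(\Delta)f\|_{{\bf H}}\le\mathcal{E}_\Delta(2^{j-1},f)$ (with a harmless reindexing and a constant absorbing the $j=0$ term, where one compares with $\|f\|_{{\bf H}}$). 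Summing $2^{j\alpha q}\|F_j(\Delta)f\|_{{\bf H}}^q$ then shows the expression in (\ref{normequiv-1}) is dominated by the norm in Theorem \ref{approx}. Combining the two inequalities, and invoking Theorem \ref{approx} to identify that norm with the $\mathbf{B}^{\alpha}_q$-norm, completes the proof; the modifications for $q=\infty$ are the usual ones, replacing the $\ell^q$-sum by a weighted supremum throughout.

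The main obstacle I anticipate is bookkeeping around the overlapping supports and the endpoints: the bands $[2^{j-1},2^{j+1}]$ overlap pairwise, so one must be careful that $\sum_{k>j}F_k^2(\Delta)f$ genuinely enjoys a square-function bound (this is where the quadratic, as opposed to merely additive, partition of unity is essential, via (\ref{Decomp})), and that the endpoint spectral values contribute nothing — which is fine because ${\bf PW}_{\omega}(\Delta^{1/2})$ is the range of the projection ${\bf 1}_{[0,\omega]}(\Delta^{1/2})$ by Theorem \ref{PWproprties}(3), a closed-interval cutoff. A secondary point is that Theorem \ref{approx} is stated with $\mathcal{E}_2(f,2^j;\Delta)$ (the best approximation by ${\bf PW}_{2^j}$), so I must make sure the dyadic shifts $j\mapsto j\pm 1$ introduced above are absorbed into the equivalence constants, which they are since $2^{(j\pm1)\alpha}\asymp 2^{j\alpha}$; none of this affects the final norm equivalence.
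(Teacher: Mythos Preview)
Your proposal is correct and follows essentially the same route as the paper: bound the best-approximation error by a tail of the $F_j(\Delta)f$ pieces, invoke the discrete Hardy inequality for one direction, and use $F_j(\Delta)g=0$ for $g\in {\bf PW}_{2^{j-1}}(\Delta^{1/2})$ for the reverse direction. The only difference is cosmetic: the paper bounds the tail by the $\ell^1$-sum $\sum_{k>j}\|F_k(\Delta)f\|_{{\bf H}}$ via the triangle inequality, whereas you pass through the $\ell^2$-bound from (\ref{Decomp}); either feeds into discrete Hardy in the same way. (Your aside that the tail is ``in fact'' controlled by $c\sup_{k>j}\|F_k(\Delta)f\|_{{\bf H}}$ is not correct and should be dropped, but you do not actually use it.)
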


\begin{proof}

We obviously have
$$
\mathcal{E}_{2}(f, 2^{l}; \Delta)\leq \sum_{j> l} \left \|F_j (\Delta) f\right \|_{{\bf H}}.
$$
By using a discrete version of Hardy's inequality \cite{BB} we obtain the estimate
\begin{equation} \label{direct}
\|f\|+\left(\sum_{l=0}^{\infty}\left(2^{l\alpha }\mathcal{E}_{2}(f,
2^{l}; \Delta)\right)^{q}\right)^{1/q}\leq C \left(\sum_{j=0}^{\infty}\left(2^{j\alpha
}\left \|F_j(\Delta) f\right \|_{{\bf H}}\right)^{q}\right)^{1/q}.
\end{equation}
Conversely,
 for any $g\in {\bf PW}_{2^{j-1}}\left(\Delta^{1/2}\right)$ we have
$$
\left\|F_j(\Delta) f\right\|_{{\bf H}}=\left\|F_{j}(\Delta) (f-g)\right\|_{{\bf H}}\leq \|f-g\|_{{\bf H}}.
$$
This implies the estimate
$$
\left\|F_j(\Delta) f\right\|_{{\bf H}}\leq \mathcal{E}_{2}(f,\>2^{j-1}; \Delta),
$$
which shows that the inequality opposite to (\ref{direct}) holds.
 The proof is complete.
\end{proof}

 \begin{thm}\label{framecoef}
For $\alpha>0,\>\>\> 1\leq q\leq\infty,$ the norm of
 $\mathbf{B}_{q}^{\alpha}$
  is equivalent to
\begin{equation}
 \left(\sum_{j=0}^{\infty}2^{j\alpha q }
\left(\sum_{k}\left|\left<f,\Phi^{j}_{k}\right>\right|^{2}\right)^{q/2}\right)^{1/q}\asymp \|f\|_{\mathbf{B}_{q}^{\alpha}},
\label{normequiv}
\end{equation}
  with the standard modifications for $q=\infty$.
\end{thm}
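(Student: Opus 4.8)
The plan is to deduce Theorem \ref{framecoef} from Theorem \ref{projections} by showing that, for each fixed dyadic level $j$, the frame sum $\sum_k |\langle f, \Phi_k^j\rangle|^2$ is comparable to $\|F_j(\Delta) f\|_{{\bf H}}^2$, with constants independent of $j$. Once that level-by-level equivalence is in hand, raising to the power $q/2$, multiplying by $2^{j\alpha q}$, summing over $j$, and taking $q$-th roots converts the norm in \eqref{normequiv-1} into the left-hand side of \eqref{normequiv}, and Theorem \ref{projections} finishes the argument.

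First I would record the key spectral observation: since $F_j$ is supported in $[2^{j-1}, 2^{j+1}]$, the vector $F_j(\Delta) f$ lies in ${\bf PW}_{[2^{j-1},\, 2^{j+1})}(\Delta^{1/2})$, and moreover on that spectral band the operator $F_j(\Delta)$ can be inverted: one picks an auxiliary smooth function $\widetilde{F}_j$, supported in $[2^{j-2}, 2^{j+2}]$ say, with $\widetilde{F}_j F_j = F_j$ on the relevant band, so that $F_j(\Delta) \widetilde{F}_j(\Delta) = F_j(\Delta)$ and, on ${\bf PW}_{[2^{j-1},\,2^{j+1})}(\Delta^{1/2})$, $F_j(\Delta)$ acts as an invertible multiplier bounded above and below by absolute constants (because $0 \le F_j \le 1$ and $F_j$ is bounded below by a fixed positive constant on, e.g., $[2^{j-1}\cdot\tfrac{4}{3}, 2^{j+1}\cdot\tfrac{3}{4}]$ — here one should be a little careful and instead use the partition property $\sum_j F_j^2 = 1$ to get the lower control). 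Then I would apply the frame inequality from the definition preceding Theorem in Subsection \ref{Hilb}: for $h = F_j(\Delta) f \in {\bf PW}_{[2^{j-1},\,2^{j+1})}(\Delta^{1/2})$,
\begin{equation}
a \|F_j(\Delta) f\|_{{\bf H}}^2 \leq \sum_{k=1}^{K_j} \left| \left\langle F_j(\Delta) f, \Phi_k^j \right\rangle \right|^2 \leq b \|F_j(\Delta) f\|_{{\bf H}}^2.
\end{equation}
Since $\Phi_k^j \in {\bf PW}_{[2^{j-1},\,2^{j+1})}(\Delta^{1/2})$ and $F_j(\Delta)$ is self-adjoint and acts as the identity (up to the bounded-below multiplier) on that band, $\langle F_j(\Delta) f, \Phi_k^j\rangle = \langle f, F_j(\Delta)\Phi_k^j\rangle$ and $F_j(\Delta)\Phi_k^j$ differs from $\Phi_k^j$ only through a multiplier comparable to a constant; comparing $\langle f, \Phi_k^j\rangle$ with $\langle f, F_j(\Delta)\Phi_k^j\rangle$ then gives $\sum_k |\langle f, \Phi_k^j\rangle|^2 \asymp \|F_j(\Delta) f\|_{{\bf H}}^2$ with constants depending only on $a, b$ and the fixed profile of $g$.

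The main obstacle I anticipate is precisely this last comparison: the frame $\{\Phi_k^j\}$ is a frame for the band space ${\bf PW}_{[2^{j-1},\,2^{j+1})}(\Delta^{1/2})$, so the frame inequality directly controls $\sum_k |\langle g, \Phi_k^j\rangle|^2$ only for $g$ in that space, whereas I want it for the coefficients $\langle f, \Phi_k^j\rangle$ with general $f \in {\bf H}$. The resolution is that $\langle f, \Phi_k^j\rangle = \langle P_j f, \Phi_k^j\rangle$ where $P_j = {\bf 1}_{[2^{j-1},\,2^{j+1})}(\Delta)$ is the spectral projection onto the band (by Theorem \ref{PWproprties}(3) applied to the interval $[2^{j-1}, 2^{j+1})$), because $\Phi_k^j$ already lies in the range of $P_j$; and then $P_j f$ and $F_j(\Delta) f$ span the same one need only relate $\|P_j f\|$ to $\|F_j(\Delta) f\|$. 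They are not literally equal, but $F_j$ is bounded and bounded below on $[2^{j-1},\,2^{j+1}) \setminus (\text{a fixed-proportion collar})$, and the collar contributions telescope when one sums over $j$; alternatively, and more cleanly, one can simply replace the target quantity $\|F_j(\Delta) f\|$ in Theorem \ref{projections} throughout this proof by the equivalent quantity $\|P_j f\|$ (these two Besov characterizations being interchangeable by a routine argument using $\sum_j F_j^2 = 1$ and bounded overlap of the supports), after which $\langle f, \Phi_k^j\rangle = \langle P_j f, \Phi_k^j\rangle$ makes the frame inequality apply verbatim with $g = P_j f$. I would adopt this second route. Finally, with $\sum_k |\langle f, \Phi_k^j\rangle|^2 \asymp \|P_j f\|_{{\bf H}}^2 \asymp \|F_j(\Delta) f\|_{{\bf H}}^2$ uniformly in $j$, I substitute into \eqref{normequiv-1}, absorb the $\|f\|_{{\bf H}}$ term into the $j=0$ summand (noting $\|f\|_{{\bf H}}^2 = \sum_j \|F_j(\Delta)f\|^2 \geq \|F_0(\Delta) f\|^2$ and, conversely, $\|f\|_{{\bf H}} \le$ the right side since $\alpha > 0$), and invoke Theorem \ref{projections} to conclude the claimed equivalence \eqref{normequiv}, with the standard $\sup$-type modification handling $q = \infty$.
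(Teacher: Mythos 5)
Your proposal is correct and follows the same skeleton as the paper's proof: apply the frame inequality on each dyadic band and then invoke Theorem \ref{projections} to pass from $\sum_j\bigl(2^{j\alpha}\|F_j(\Delta)f\|_{\bf H}\bigr)^q$ to the Besov norm. The difference is in how the crucial level-$j$ equivalence $\sum_k|\langle f,\Phi^j_k\rangle|^2\asymp\|F_j(\Delta)f\|^2_{\bf H}$ is justified. The paper's own proof applies the frame inequality to $g=F_j(\Delta)f$ and then simply asserts the bound for the coefficients $\langle f,\Phi^j_k\rangle$, silently identifying $\langle F_j(\Delta)f,\Phi^j_k\rangle$ with $\langle f,\Phi^j_k\rangle$ even though $F_j(\Delta)$ is not the identity on the band; as written this step (and its constants) is not justified. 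You correctly flag exactly this issue and close it by the cleaner observation that $\langle f,\Phi^j_k\rangle=\langle P_jf,\Phi^j_k\rangle$ for the spectral projection $P_j$ onto the band, since $\Phi^j_k$ lies in its range, so the frame inequality applies verbatim to $P_jf$; the remaining comparison $\sum_j\bigl(2^{j\alpha}\|P_jf\|\bigr)^q\asymp\sum_j\bigl(2^{j\alpha}\|F_j(\Delta)f\|\bigr)^q$ follows from $0\le F_j\le 1$, $\sum_jF_j^2=1$, and the bounded overlap of the supports, exactly as you indicate. Your first detour through an auxiliary multiplier $\widetilde F_j$ is unnecessary and you rightly abandon it; the $P_j$ route you adopt is the correct repair, and it buys a complete argument where the paper's version leaves a gap. (One cosmetic caveat: the paper is internally inconsistent about whether the bands refer to the spectrum of $\Delta$ or of $\Delta^{1/2}$; your argument is insensitive to this choice as long as it is made consistently.)
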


\begin{proof}
For   $f\in {\bf  H}$  and the operator $F_{j}(\Delta)$
we have
\begin{equation}
a\left \|F_j (\Delta) f\right \|_{{\bf H}}^{2}\leq
\sum_k\left|\left< F_{j} (\Delta) f, \phi^{j}_{k}\right>\right|^{2}\leq b
\left\|F_j (\Delta) f\right \|_{\bf H}^{2},
\end{equation}
 and then 
 we obtain the following inequality
$$
\sum_{k}\left|\left<f,\Phi^{j}_{k}\right>\right|^{2}\leq \left \|F_j (\Delta) f\right \|_{{\bf H}}^{2}\leq a^{-1}
\sum_{k}\left|\left<f,\Phi^{j}_{k}\right>\right|^{2}
,\>\>f\in {\bf H}.
$$
Theorem is proven.
\end{proof}

\section{Appendix. $K$-functional, Interpolation and Approximation spaces}\label{Appendix}

The goal of the section is to introduce basic notions of the theory of interpolation spaces  \cite{BB},  \cite{BL}, \cite{KPS}, \cite{T}, and  approximation spaces  \cite{BL},  \cite{KP}, \cite{PS}.  
It is important to realize that the relations between
interpolation and approximation spaces cannot be described
in  the language of normed spaces. We have to make use of
quasi-normed linear spaces in order to treat them
simultaneously.

A quasi-norm $\|\cdot\|_{\bf A}$ on a linear space $\bf A$ is
a real-valued function on $\bf E$ such that for any
$f,f_{1}, f_{2} \in {\bf A}$ the following holds true:  
\begin{enumerate}

\item $\|f\|_{\bf A}\geq 0;\>\>\>$

\item $\|f\|_{\bf A}=0  \Longleftrightarrow   f=0;\>\>\>$

\item $\|-f\|_{\bf A}=\|f\|_{\bf A};\>\>\>$

\item there exists some $C_{\EB} \geq 1$ such that
$\|f_{1}+f_{2}\|_{\EB}\leq C_{\EB}(\|f_{1}\|_{\EB}+\|f_{2}\|_{\EB}).\>\>$

\end{enumerate}
Two quasi-normed linear spaces $\EB$ and $\FB$ form a
pair if they are linear subspaces of a common linear space
$\mathcal{A}$ and the conditions
$\|f_{k}-g\|_{\EB}\rightarrow 0,$ and
$\|f_{k}-h\|_{\FB}\rightarrow 0$
imply equality $g=h$ (in $\mathcal{A}$).
For any such pair $\EB,\FB$ one can construct the
space $\EB \cap \FB$ with quasi-norm
$$
\|f\|_{\EB \cap \FB}=\max\left(\|f\|_{\EB},\|f\|_{\FB}\right)
$$
and the sum of the spaces,  $\EB + \FB$ consisting of all sums $f_0+f_1$ with $f_0 \in \EB, f_1 \in \FB$, and endowed with the quasi-norm
$$
\|f\|_{\EB + \FB}=\inf_{f=f_{0}+f_{1},f_{0}\in \EB, f_{1}\in
\FB}\left(\|f_{0}\|_{\EB}+\|f_{1}\|_{\FB}\right).
$$

Quasi-normed spaces $\HB$ with
$\EB \cap \FB \subset \HB \subset \EB + \FB$
are called intermediate between $\EB$ and $\FB$.
If both $E$ and $F$ are complete the
inclusion mappings are automatically continuous. %HGFei
An additive homomorphism $T: \EB \rightarrow \FB$
is called bounded if
$$
\|T\|=\sup_{f\in \EB,f\neq 0}\|Tf\|_{\FB}/\|f\|_{\EB}<\infty.
$$
An intermediate quasi-normed linear space $\HB$
interpolates between $\EB$ and $\FB$ if every bounded homomorphism $T:
\EB+\FB \rightarrow \EB + \FB$
which is a bounded homomorphism of $\EB$ into
$\EB$ and a bounded homomorphism of $\FB$ into $\FB$
is also a bounded homomorphism of $\HB$ into $\HB$.
On $\EB+\FB$ one considers the so-called Peetre's $K$-functional
$$
K(f, t)=K(f, t,\EB, \FB)=\inf_{f=f_{0}+f_{1},f_{0}\in \EB,
f_{1}\in \FB}\left(\|f_{0}\|_{\EB}+t\|f_{1}\|_{\FB}\right).\label{K}
$$
The quasi-normed linear space $(\EB,\FB)^{K}_{\theta,q}$,
with parameters $0<\theta<1, \,
0<q\leq \infty$,  or $0\leq\theta\leq 1, \, q= \infty$,
is introduced as the set of elements $f$ in $\EB+\FB$ for which
\begin{equation}
\|f\|_{\theta,q}=\left(\int_{0}^{\infty}
\left(t^{-}K(f,t)\right)^{q}\frac{dt}{t}\right)^{1/q} < \infty .\label{Knorm}
\end{equation}

It turns out that $(\EB,\FB)^{K}_{\theta,q}$
% , 0<<1, 0\leq q\leq
% \infty,$ or $0\leq\leq 1,  q= \infty,$
% redundant!! hgfei
with the quasi-norm
(\ref{Knorm})  interpolates between $\EB$ and $\FB$.

Let us introduce another functional on $\EB+\FB$,
where $\EB$ and $\FB$ form a pair of quasi-normed linear spaces
$$
\mathcal{E}(f, t)=\mathcal{E}(f, t, \mathbf{E},  \mathbf{F})=\inf_{g\in \FB,
\|g\|_{\FB}\leq t}\|f-g\|_{\EB}.
$$
\begin{defn}
The approximation space $\mathcal{E}_{\alpha,q}(\EB, \FB),
0<\alpha<\infty, 0<q\leq \infty $ is the quasi-normed linear spaces
of all $f\in \EB+\FB$ for which the quasi-norm
\begin{equation}
\| f \|_{\mathcal{E}_{\alpha,q}(\EB, \FB)} = \left(\int_{0}^{\infty}\left(t^{\alpha}\mathcal{E}(f,
t)\right)^{q}\frac{dt}{t}\right)^{1/q}
\end{equation}
is finite.
\end{defn}

The next two theorems   represent a very abstract version of what is  known as a Direct and an Inverse  Approximation Theorems \cite{PS,BS}. In the form it is stated below they were proved in \cite{KP}.

\begin{thm}\label{Direct}
 Suppose that $\mathcal{T}\subset \FB \subset \EB$ are quasi-normed
linear spaces and $\EB$ and $\FB$ are complete.
If there exist $C>0$ and $\beta >0$ such that
the following Jackson-type inequality is satisfied
$
t^{\beta}\mathcal{E}(t,f,\mathcal{T},\EB)\leq C\|f\|_{\FB},
\>\>t>0, \>\> f \in \FB,$
 then the following embedding holds true
\begin{equation}\label{imbd-1}
(\EB,\FB)^{K}_{\theta,q}\subset
\mathcal{E}_{\theta\beta,q}(\EB, \mathcal{T}), \quad \>0<\theta<1, \>0<q\leq \infty.
\end{equation}
\end{thm}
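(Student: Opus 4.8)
The plan is to transfer the decay of the Peetre $K$-functional of $f$ into a rate of best approximation of $f$ by elements of $\mathcal{T}$. For each approximation level $s>0$ I would manufacture an approximant to $f$ lying in $\mathcal{T}$ by: (i) splitting $f$ almost optimally for $K(f,\cdot,\EB,\FB)$ at a parameter $t$ to be tied to $s$; (ii) approximating the $\FB$-component of that splitting by an element of $\mathcal{T}$ via the hypothesized Jackson inequality; and then controlling the total error by the quasi-triangle inequality in $\EB$. The correct coupling of the two parameters will turn out to be $t=s^{-\beta}$, precisely so that the two resulting contributions are simultaneously dominated by a single value of the $K$-functional.

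In detail, I would fix $f\in(\EB,\FB)^{K}_{\theta,q}$; since $\mathcal{T}\subset\FB\subset\EB$ we have $\EB+\FB=\EB=\EB+\mathcal{T}$, so $f\in\EB$ and the quasi-norm of $\mathcal{E}_{\theta\beta,q}(\EB,\mathcal{T})$ is meaningful on $f$. Given $s>0$, set $t=s^{-\beta}$ and choose a decomposition $f=f_{0}+f_{1}$, $f_{0}\in\EB$, $f_{1}\in\FB$, with $\|f_{0}\|_{\EB}+t\|f_{1}\|_{\FB}\le 2K(f,t,\EB,\FB)$, so that both $\|f_{0}\|_{\EB}\le 2K(f,s^{-\beta},\EB,\FB)$ and $s^{-\beta}\|f_{1}\|_{\FB}\le 2K(f,s^{-\beta},\EB,\FB)$. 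Applying the Jackson hypothesis $s^{\beta}\mathcal{E}(s,f_{1},\mathcal{T},\EB)\le C\|f_{1}\|_{\FB}$ to $f_{1}$ produces $g\in\mathcal{T}$ with $\|g\|_{\mathcal{T}}\le s$ and $\|f_{1}-g\|_{\EB}\le 2Cs^{-\beta}\|f_{1}\|_{\FB}$. Then, with $C_{\EB}$ the quasi-triangle constant of $\EB$,
$$
\mathcal{E}(f,s,\EB,\mathcal{T})\le\|f-g\|_{\EB}\le C_{\EB}\big(\|f_{0}\|_{\EB}+\|f_{1}-g\|_{\EB}\big)\le C_{\EB}(2+4C)\,K(f,s^{-\beta},\EB,\FB),
$$
which is the key pointwise estimate $\mathcal{E}(f,s,\EB,\mathcal{T})\le C'K(f,s^{-\beta},\EB,\FB)$ with $C'=C'(C_{\EB},C)$ independent of $f$ and $s$.

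To finish, I would raise this bound to the $q$-th power, weight by $s^{\theta\beta q}$, integrate against $ds/s$, and substitute $u=s^{-\beta}$ (so $ds/s=-\beta^{-1}\,du/u$ and $s^{\theta\beta}=u^{-\theta}$, a bijection of $(0,\infty)$ since $\beta>0$):
$$
\int_{0}^{\infty}\big(s^{\theta\beta}\mathcal{E}(f,s,\EB,\mathcal{T})\big)^{q}\frac{ds}{s}\le (C')^{q}\int_{0}^{\infty}\big(u^{-\theta}K(f,u,\EB,\FB)\big)^{q}\frac{du}{\beta u}=\frac{(C')^{q}}{\beta}\,\|f\|_{(\EB,\FB)^{K}_{\theta,q}}^{q}<\infty .
$$
Hence $\|f\|_{\mathcal{E}_{\theta\beta,q}(\EB,\mathcal{T})}\le\beta^{-1/q}C'\|f\|_{(\EB,\FB)^{K}_{\theta,q}}$, which is the embedding (\ref{imbd-1}) for $0<q<\infty$; the case $q=\infty$ runs identically with the integrals replaced by suprema, the substitution $u=s^{-\beta}$ turning $\sup_{s}s^{\theta\beta}K(f,s^{-\beta},\EB,\FB)$ into $\sup_{u}u^{-\theta}K(f,u,\EB,\FB)$.

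I do not expect a serious obstacle: this is the classical equivalence between $K$-functional decay and approximation order, and the only points needing care are the bookkeeping with the quasi-norm constant $C_{\EB}$ and the extra factors coming from working with $\varepsilon$-near-optimal (rather than exact) minimizers in the infima defining $K$ and $\mathcal{E}$. The one genuinely non-mechanical choice is recognising that the splitting parameter must be $t=s^{-\beta}$, which is exactly what makes $\|f_{0}\|_{\EB}$ and $s^{-\beta}\|f_{1}\|_{\FB}$ both controlled by $K(f,s^{-\beta},\EB,\FB)$ and thus collapses the estimate onto a single argument of the $K$-functional.
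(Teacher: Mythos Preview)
The paper does not supply its own proof of this theorem; it simply records the statement and attributes the proof to \cite{KP} (Krein--Pesenson). Your argument is the standard one for this direct/Jackson embedding and is correct: the essential step is precisely the coupling $t=s^{-\beta}$, which makes both the $\EB$-remainder $\|f_{0}\|_{\EB}$ and the Jackson error $s^{-\beta}\|f_{1}\|_{\FB}$ controlled by a single value $K(f,s^{-\beta},\EB,\FB)$, after which the substitution $u=s^{-\beta}$ converts the $\mathcal{E}_{\theta\beta,q}$-integral into the $(\EB,\FB)^{K}_{\theta,q}$-integral. Your bookkeeping with the quasi-triangle constant $C_{\EB}$ and with near-optimal (factor $2$) realisations of the infima defining $K$ and $\mathcal{E}$ is exactly what is needed in the quasi-normed setting.
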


\begin{thm}\label{Inverse}
If there exist $C>0$ and $\beta>0$
such that
the following Bernstein-type inequality holds
$
\|f\|_{\FB}\leq C\|f\|^{\beta}_{\mathcal{T}}\|f\|_{\EB}
,\>\> f\in \mathcal{T},
$
then the following embedding holds true
\begin{equation}\label{imbd-2}
\mathcal{E}_{\theta\beta, q}(\EB, \mathcal{T})\subset
(\EB, \FB)^{K}_{\theta, q}  , \quad 0<\theta<1, \>0<q\leq \infty.
\end{equation}
\end{thm}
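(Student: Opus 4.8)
The plan is to derive the embedding from a pointwise bound on the $K$-functional $K(f,t,\EB,\FB)$ at the dyadic scales $t=2^{-N\beta}$, $N\in\mathbb{N}$, written in terms of the approximation errors $\mathcal{E}_{n}:=\mathcal{E}(2^{n},f,\EB,\mathcal{T})$, and then to pass to $\ell^{q}$-sums. First I would record the routine discretizations: since $t\mapsto K(f,t)$ is non-decreasing and $t\mapsto\mathcal{E}(t,f)$ is non-increasing, the quasi-norm of $(\EB,\FB)^{K}_{\theta,q}$ is comparable to $\bigl(\sum_{N\ge 0}(2^{N\theta\beta}K(f,2^{-N\beta}))^{q}\bigr)^{1/q}$ plus a term $\lesssim\|f\|_{\EB}$ coming from $t\ge 1$ (finite because $\theta>0$), while the quasi-norm of $\mathcal{E}_{\theta\beta,q}(\EB,\mathcal{T})$ is comparable to $\bigl(\sum_{n\ge 0}(2^{n\theta\beta}\mathcal{E}_{n})^{q}\bigr)^{1/q}$; in particular $f\in\mathcal{E}_{\theta\beta,q}$ forces $\mathcal{E}_{n}\to 0$, which is needed below, and the case $q=\infty$ is handled identically with suprema.

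Next I would build a telescoping representation of $f$. For each $n$ choose $g_{n}\in\mathcal{T}$ with $\|g_{n}\|_{\mathcal{T}}\le 2^{n}$ and $\|f-g_{n}\|_{\EB}\le 2\mathcal{E}_{n}$, and set $h_{n}=g_{n+1}-g_{n}\in\mathcal{T}$. The quasi-triangle inequalities give $\|h_{n}\|_{\mathcal{T}}\lesssim 2^{n}$ and $\|h_{n}\|_{\EB}\lesssim\mathcal{E}_{n}$, so the Bernstein hypothesis applied to $h_{n}$ yields $\|h_{n}\|_{\FB}\le C\|h_{n}\|_{\mathcal{T}}^{\beta}\|h_{n}\|_{\EB}\lesssim 2^{n\beta}\mathcal{E}_{n}$. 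Since $g_{N}=g_{0}+\sum_{n=0}^{N-1}h_{n}$ and $\|f-g_{N}\|_{\EB}\le 2\mathcal{E}_{N}\to 0$, the series converges to $f$ in $\EB$; fixing $N$ and splitting $f=(f-g_{N})+g_{N}$ with $f-g_{N}\in\EB$ and $g_{N}\in\mathcal{T}\subset\FB$, one gets $K(f,2^{-N\beta})\le\|f-g_{N}\|_{\EB}+2^{-N\beta}\|g_{N}\|_{\FB}\le 2\mathcal{E}_{N}+2^{-N\beta}\|g_{N}\|_{\FB}$. To control $\|g_{N}\|_{\FB}$ --- and this is where the lack of a genuine triangle inequality has to be confronted --- I would pass to an equivalent $\rho$-subadditive quasi-norm on $\FB$ (with $0<\rho\le 1$ depending only on the quasi-triangle constant of $\FB$, by the Aoki--Rolewicz renorming), obtaining $\|g_{N}\|_{\FB}^{\rho}\lesssim\|g_{0}\|_{\FB}^{\rho}+\sum_{n=0}^{N-1}(2^{n\beta}\mathcal{E}_{n})^{\rho}$, where $\|g_{0}\|_{\FB}\lesssim\|g_{0}\|_{\EB}\lesssim\|f\|_{\EB}$ by Bernstein together with the quasi-triangle inequality.

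Assembling these estimates and writing $a_{n}=2^{n\theta\beta}\mathcal{E}_{n}$ gives the key bound
\[
2^{N\theta\beta}K(f,2^{-N\beta})\ \lesssim\ a_{N}\ +\ 2^{-N\beta(1-\theta)}\|f\|_{\EB}\ +\ \Bigl(\sum_{m=1}^{N}2^{-m\beta(1-\theta)\rho}\,a_{N-m}^{\rho}\Bigr)^{1/\rho}.
\]
Taking $\ell^{q}$-norms in $N$: the first term contributes $\|(a_{n})\|_{\ell^{q}}\asymp\|f\|_{\mathcal{E}_{\theta\beta,q}}$; the second is summable and $\lesssim\|f\|_{\EB}$ precisely because $\theta<1$; and the third is a convolution of $(a_{n}^{\rho})_{n}$ with the geometric, hence summable, weight $m\mapsto 2^{-m\beta(1-\theta)\rho}$, so a discrete Hardy/Young inequality (exactly as already used in the proof of Theorem \ref{projections}, cf.\ \cite{BB}) bounds its $\ell^{q}$-norm once more by $C\|(a_{n})\|_{\ell^{q}}$. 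This gives $\|f\|_{(\EB,\FB)^{K}_{\theta,q}}\lesssim\|f\|_{\EB}+\|f\|_{\mathcal{E}_{\theta\beta,q}(\EB,\mathcal{T})}$, which --- with the usual convention that the approximation quasi-norm already absorbs the $\EB$-term, as in \cite{KP} --- is the asserted embedding; for $q=\infty$ the convolution is simply replaced by a dominated geometric sum. I expect the only genuine obstacle to be this quasi-norm bookkeeping: once the Aoki--Rolewicz $\rho$-subadditive norm is in place, the rest is the classical passage from a Bernstein inequality to a $K$-functional estimate, the crucial point being that the Bernstein exponent $\beta$ feeds the decisive geometric factor $2^{-m\beta(1-\theta)}$, which is summable exactly when $\theta<1$. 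The abstract statement in this form is due to \cite{KP}.
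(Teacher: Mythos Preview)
The paper does not supply its own proof of this statement: Theorem~\ref{Inverse} is merely stated in the Appendix with attribution to \cite{KP} (see also \cite{PS}, \cite{BS}). Your argument---dyadic near-best approximants $g_{n}$, telescoping increments $h_{n}=g_{n+1}-g_{n}$, Bernstein applied to each $h_{n}$, an Aoki--Rolewicz $\rho$-subadditive renorming of $\FB$ to sum the increments, and a discrete Hardy/Young convolution bound driven by the geometric weight $2^{-m\beta(1-\theta)}$---is exactly the standard proof of the abstract inverse theorem and is correct. The absorption of $\|f\|_{\EB}$ into the approximation quasi-norm that you invoke at the end is also legitimate, since $\mathcal{E}(f,t)\uparrow\|f\|_{\EB}$ as $t\downarrow 0$ and $\theta\beta>0$ makes $\int_{0}^{1}t^{\theta\beta q-1}\,dt$ finite.
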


\bibliographystyle{amsalpha}

\end{document}